\documentclass[reqno,11pt,final]{amsart} 
\usepackage{showlabels}
\usepackage[margin=1in,marginpar=2cm]{geometry}
\usepackage[foot]{amsaddr} 
\usepackage{amssymb}
\usepackage{verbatim} 
\usepackage{amssymb}
\usepackage{amsthm}
\usepackage{amsmath}
\usepackage{amsfonts}
\usepackage{latexsym}
\usepackage{mathtools} 
\usepackage{changepage}
\usepackage{enumitem} 
\usepackage[english]{babel}
\usepackage{esint}
\usepackage{cite}
\usepackage{calrsfs}
\usepackage{hyperref} 
\usepackage{color}
\usepackage{enumitem}

\usepackage{kotex}
\usepackage{autobreak}

\def\Xint#1{\mathchoice
{\XXint\displaystyle\textstyle{#1}}%
{\XXint\textstyle\scriptstyle{#1}}%
{\XXint\scriptstyle\scriptscriptstyle{#1}}%
{\XXint\scriptscriptstyle\scriptscriptstyle{#1}}%
\!\int}
\def\XXint#1#2#3{{\setbox0=\hbox{$#1{#2#3}{\int}$ }
\vcenter{\hbox{$#2#3$ }}\kern-.6\wd0}}

\theoremstyle{plain}

\newtheorem{theorem}{Theorem}
\newtheorem{lemma}[theorem]{Lemma}
\newtheorem{definition}[theorem]{Definition}

\newtheorem{remark}[theorem]{Remark}

\newtheorem{corollary}[theorem]{Corollary}

\usepackage{todonotes}
\numberwithin{equation}{section}
\numberwithin{theorem}{section}

\newcommand{\eqdef }{\overset{\mbox{\tiny{def}}}{=}}

\title[Local Existence and Singularity Formation for Vlasov–Poisson–Isotropic Landau]{Local Existence and Finite-Time Singularity Formation in the Vlasov-Poisson-Isotropic Landau System}

\author[J. W. Jang]{Jin Woo Jang$^\dagger$}
\address{$^\dagger$Department of Mathematics, POSTECH (Pohang University of Science and Technology), Pohang 37673, Republic of Korea. \href{mailto:jangjw@postech.ac.kr}{jangjw@postech.ac.kr} }

\author[J. Kim]{Junsung Kim$^\ddagger$}
\address{$^\ddagger$Department of Mathematics, POSTECH (Pohang University of Science and Technology), Pohang 37673, Republic of Korea. \href{mailto:junsung998@postech.ac.kr}{junsung998@postech.ac.kr} }

\AtBeginDocument{%
   \def\MR#1{}
}
\begin{document}

\subjclass[2010]{Primary: 35Q83, 35A02, 82C40. Secondary: 35B65, 35K65}
\keywords{Vlasov–Poisson–Landau system, Isotropic Landau operator, Finite-time singularity formation, Local existence of weak solutions, Breakdown of solutions in kinetic equations}

\date{\today}\begin{abstract}
The isotropic Landau (Coulomb) operator was introduced in kinetic theory by Krieger and Strain \cite{iL_KS_2012}. In this work, we study the spatially inhomogeneous Vlasov--Poisson--isotropic Landau system. We first establish a local--in--time existence theory for the Cauchy problem: for initial data satisfying a suitable smallness condition in an appropriate norm, there exists a non--negative solution on a time interval $[0,T]$, where the lifespan $T$ depends on the size of the initial data.

Beyond the local theory, we investigate a mechanism that may lead to the breakdown of global existence. We show that finite--time singularity formation can occur in the gravitationally attractive case, provided that the weak solution satisfies certain a priori regularity and decay assumptions, the initial gravitational field energy exceeds the kinetic energy, and the resulting energy gap dominates the diffusive effect of the collision operator. As a consequence, if the solution is further assumed to belong to a suitable measure space up to the maximal existence time, it collapses to a single point in physical space at that time. The proof of finite--time singularity formation is based on deriving an upper bound for the second spatial moment, which becomes negative in finite time.
\end{abstract}

\setcounter{tocdepth}{2}

\maketitle
\tableofcontents
\thispagestyle{empty}
\section{Introduction}
\label{Introduction}
In this article, we study the Vlasov--Poisson--isotropic Landau (VPiL) system in $\mathbb{R}^3$,
\begin{align}\label{VPiL}
    \partial_t f + v \cdot \nabla_x f + E_f \cdot \nabla_v f
    = Q_{\mathrm{iso}}(f,f),
    \qquad
    E_f = (\mp)\,\frac{x}{4\pi |x|^3} \star_x \rho_f .
\end{align}
Here, $(-)$ and $(+)$ correspond to the gravitational and plasma cases, respectively.
The symbol $\star_x$ denotes convolution over $\mathbb{R}^3$ in the $x$-variable, and the mass/charge density is defined by
$$
    \rho_f \eqdef \int_{\mathbb{R}^3} f\,dv ,
$$
where the particle mass and charge have been normalized so that $\textup{m}=\textup{e}=1$.
The collision operator $Q_{\mathrm{iso}}$ is an isotropic Landau operator associated with a Coulombic cross section and is given by
\begin{align}\label{def.Qiso}
    Q_{\mathrm{iso}}(f,f)
    = Q_{\mathrm{KS}}(f,f)
    \eqdef (-\Delta_v)^{-1} f \, \Delta_v f + f^2 .
\end{align}
Here, the nonlinear diffusion coefficient term
$$
    (-\Delta_v)^{-1} f(v)
    \eqdef \frac{1}{4\pi}
    \int_{\mathbb{R}^3} \frac{f(u)}{|u-v|}\,du
$$
corresponds to the Newtonian potential of $f$, which may be understood as the (weak) solution to the Poisson equation.
This definition is natural whenever $f$ is locally H\"older continuous in the classical sense or is $L_v^1(\mathbb{R}^3)$ in the sense of distribution.
The subscript ``KS'' refers to the formulation introduced by Krieger and Strain~\cite{iL_KS_2012}.

\subsection{Background on Landau-type Collisional Kinetic Equations}\label{Background}

The mathematical theory of kinetic equations has seen extensive development across both collisional and collisionless regimes. In the collisional regime, the Boltzmann equation plays a central role: it models dilute gases composed of neutral atoms or molecules undergoing short--range binary interactions and governs the evolution of the velocity distribution function through a first--order integro--differential operator. The Landau equation--often regarded as the Boltzmann equation for Coulombic interactions--arises as the grazing--collision limit of the Boltzmann equation and is used widely to model plasmas where particles interact through long--range repulsive Coulomb potentials. These two equations are closely related: repulsive long--range interactions in plasmas generate scattering effects analogous to grazing collisions in rarefied gases, and indeed the rigorous grazing limit of the Boltzmann equation leads to the Landau equation \cite{Limit_Desvillettes_1992}.

It is well known that Maxwellian distributions of the form $Ce^{-c|v|^2}$ are stationary solutions of the Landau equation. Supported by experimental observations \cite{Phy.L_Tokuda_2021, Phy.L_Mannion_2023} as well as classical kinetic theory \cite{HL_Guo_2002, VPL_Guo_2012, VPL_Strain_2012, VPL_Wang_2012}, Maxwellians are understood as thermal equilibrium states of dilute plasmas. Consequently, early mathematical investigations primarily focused on small perturbations around Maxwellians as the first step toward understanding nonequilibrium statistical phenomena. The global well--posedness of such perturbations was first established by Guo \cite{HL_Guo_2002}, who later proved their exponential convergence to equilibrium in \cite{VPL_Guo_2012}. Subsequent works by Strain \cite{VPL_Strain_2012}, Wang \cite{VPL_Wang_2012} and others have extended this perturbative framework and broadened the class of admissible solutions.

For the spatially homogeneous Landau equation, large--data global well--posedness is known for a wide range of interaction potentials \cite{HL_Silvestre_2023, HL_Arsenev_1977, HL_Villani_1998, HL_Villani_2000_1, HL_Villani_2000_2, HL_Villani_2000_3}. In sharp contrast, for the inhomogeneous Landau equation, the well--posedness theory for evolutionary solutions remains largely open. Within the vacuum perturbative framework, progress toward local well--posedness has been made along several directions. The existence of local weak solutions was established in \cite{IHL_Henderson_2019, IHL_Sanchit_2019, IHL_Snelson_2019, IHL_Snelson_2024}. To further obtain properties such as regularity and uniqueness, these works require additional mesoscopic assumptions on the hydrodynamic fields--typically a uniform positive lower bound on the local charge density, together with uniform upper bounds on the charge, energy and entropy. Under such assumptions, one can derive $L^\infty$ a priori bounds for solutions, depending on the potential exponent \cite{IHL_Cameron_2018, IHL_Snelson_2018, IHL_Snelson_2023}. For very soft potentials, further moment bounds on appropriate velocity moments are also needed. These estimates allow the use of Harnack-type inequalities, leading to global Hölder continuity of weak solutions under quantitative control of the hydrodynamic quantities (see Theorem~1.2 in \cite{IHL_Cameron_2018}).

Another line of research exploits Gaussian decay in the velocity variable \cite{IHL_Snelson_2018} to compensate for the polynomial growth of the derivatives of the Landau coefficients. Henderson et al.~\cite{IHL_Henderson_2019} demonstrated that combining H\"older continuity with Schauder estimates yields local Hölder bounds on derivatives, which subsequently imply global smoothness of weak solutions. As a consequence, any finite--time blow--up of $f$ or its derivatives must be accompanied by the loss of at least one uniform boundedness condition on the hydrodynamic quantities. An alternative method was proposed by Sanchit and Henderson et al.~\cite{IHL_Snelson_2019,IHL_Sanchit_2019}, who proved local--in--time existence by assuming weighted regularity of the initial data, without requiring a priori boundedness of hydrodynamic fields.

The perturbative global theory near vacuum has also seen progress across different interaction regimes. In the moderately soft potential case, Luk \cite{IHL_Luk_2019} obtained global existence and stability by combining dispersive decay from the transport operator with weighted energy estimates, while exploiting a favorable null structure in the collision operator. Sanchit \cite{IHL_Sanchit_2020} later extended this strategy to the hard potential regime, where enhanced coercivity allows a technically simpler yet conceptually parallel argument. In the very soft potential regime and the Coulombic case, the global existence problem remains unresolved. Nevertheless, Bedrossian et al.~\cite{VPL_Bedrossian_2022} ruled out the presence of self--similar blow--up solutions, thereby eliminating one natural mechanism for singularity formation.

From a modeling perspective, plasmas also generate long--range self--consistent electromagnetic fields. An alternative physically comprehensive formulation is therefore the Vlasov--Poisson--Landau (VPL) system, in which collisional effects are coupled with transport induced by the self--consistent electric field. Existing mathematical studies of the VPL system have focused largely on perturbations near Maxwellians; see \cite{VPL_Yu_2004, VPL_Guo_2012, VPL_Strain_2012, VPL_Wang_2012}. In contrast, near-vacuum regimes in the whole space remain entirely open for the VPL system. The well-posedness issues differ fundamentally from those of the inhomogeneous Landau equation on $\mathbb{R}^3$: in particular, the definition of the electric field requires spatial integrability that cannot be derived from the conditional regularity framework for the Landau equation, as that theory crucially assumes a uniform positive lower bound on the charge density.

\subsection{The Vlasov–Poisson–Isotropic Landau System and Main Results}
In this paper, we focus on the near--vacuum regime of the Vlasov--Poisson--isotropic Landau (VPiL) system for the Coulomb potential:
\begin{equation}
    \begin{gathered}
        \partial_tf + v\cdot\nabla_xf + E_f\cdot\nabla_vf = (-\Delta_v)^{-1}f\Delta_vf + f^2,\tag{VPiL}\\
        \rho_f = \int_{\mathbb{R}^3} f\,dv, \quad E_f = \mp\frac{x}{4\pi|x|^3} \star_x\rho_f, \nonumber
    \end{gathered}
\end{equation}for $(t,x,v)\in[0,T]\times\mathbb{R}^3\times\mathbb{R}^3$.
A distinctive feature of this model is that the total energy is not conserved and may even increase, in sharp contrast to the classical Vlasov-Poisson-Landau system. More precisely, observe by Lemma \ref{I'''} in our case or \cite{iL_Gualdani_2016} in the homogeneous case that if $f\ge0$,
\begin{align*}
    \frac{d}{dt}\left(\frac{1}{2}\iint_{\mathbb{R}^6} |v|^2f\,dvdx  \pm \frac{1}{2}\int_{\mathbb{R}^3} |E_f|^2\,dx\right) = 2\iint_{\mathbb{R}^6} (-\Delta_v)^{-1}ff\,dvdx\ge0.
\end{align*}The signs $(+)$ and $(-)$ on the left-hand side correspond to the plasma and gravitational cases, respectively. This lack of a dissipative energy law precludes the use of standard global energy methods. Then, this property suggests that the dynamics may amplify concentration rather than promote smoothing, leaving open the scenario of finite--time singularities. As another feature of this model, since in $\mathbb{R}^3$ the electric field must remain well--defined, one cannot assume an \textit{a priori} lower bound on the charge/mass density that is uniform in $x$. This prevents a direct application of existing conditional regularity theories, such as global $L^\infty$ bounds \cite{IHL_Cameron_2018}, $C^\infty$ smoothing \cite{IHL_Henderson_2019}, or Gaussian bounds \cite{IHL_Snelson_2018}.

To the best of our knowledge, there has been no mathematical theory developed for the VPiL system, leaving a gap in the mathematical theory of collisional plasmas. In this work, we establish the local existence of a slowly decaying weak solution to the Cauchy problem \eqref{VPiL}. We further show that any non--negative weak solution to \eqref{VPiL} in the gravitational case, satisfying certain physically relevant conditions, has a finite maximal existence time. In particular, any such non--negative global--in--time solution (if it exists) necessarily loses regularity after some finite time.

Our main theorems can be (informally) summarized as follows:
\begin{itemize}
    \item (\textit{Local existence}) In both gravitational and plasma cases, if the initial data exhibits sufficient polynomial decay together with its derivative, and is small in an appropriate weighted $C^2$ norm, then there exists a polynomially decaying solution to \eqref{VPiL} on the interval $[0,T]$, where $T$ depends on the size of $f^\textup{in}$ in a suitable weighted $C^2$ norm.
    \item (\textit{Finite-time singularity}) We consider the purely gravitationally attractive case. If the gravitational field energy exceeds the kinetic energy and the resulting energy gap is sufficiently large compared to the diffusive effect induced by the collision operator, then the solution to the collisional system develops a finite-time singularity. Moreover, if we further assume that a weak solution exists up to the maximal existence time in an appropriate measure space, then the solution collapses to a single point in space at that time.
\end{itemize}

\subsection{Relationship between Landau and Isotropic Landau Operator and these Basic Properties}

The classical Landau collisional operator $Q(\cdot,\cdot)$ is given by
\begin{align}\label{Landau op}
    Q(f,f) = \sum_{i,j=1}^3\partial_{v_i}\int_{\mathbb{R}^3} \Pi_{ij}(v-v_\star)\left((\partial_{v_j}f)(v)f(v_\star) - (\partial_{v_j}f)(v_\star)f(v)\right)\,dv_\star,
\end{align}where the kernel matrix $(\Pi_{ij}(v))_{1\le i,j\le3}$ in \eqref{Landau op} is given by
\begin{align}\label{kernel}
    \Pi(v) = \frac{1}{8\pi} \left(\mathbb{I}_3 -\frac{v\otimes v}{|v|^2}\right)|v|^{\gamma+2}, \quad \gamma\in[-3,1].
\end{align}Here, the interaction exponent $\gamma$ characterizes different physical regimes: 
$\gamma \in (0,1]$ corresponds to hard potentials, 
$\gamma \in (-2,0)$ to moderately soft potentials, 
and $\gamma \in (-3,-2]$ to very soft potentials. 
In particular, $\gamma = 0$ is known as the Maxwellian molecule case, 
while $\gamma = -3$ represents the Coulomb potential, 
which is regarded as the most physically relevant regime for plasma dynamics. One can simplify the Landau operator \eqref{Landau op} as
\begin{align*}
    Q(f,f) = a_{ij}(f)\partial_{v_iv_j}f - c(f)f,
\end{align*}where for some $a_\gamma, c_\gamma>0$,
\begin{align}\label{leading coeff}
    a_{ij}(f) = a_\gamma\int_{\mathbb{R}^3} |w|^{\gamma+2}\left(\delta_{ij} - \frac{w_iw_j}{|w|^2}\right)f(v-w)\,dw,
\end{align}and
\begin{align*}
    c(f) =
    \begin{cases}
    c_{\gamma} \int_{\mathbb{R}^3} |w|^\gamma f(v-w)\,dw, \quad &\gamma\in(-3,1],\\
    f, &\gamma = -3.
    \end{cases}
\end{align*}

Notice that since $f$ formally satisfies
\begin{align*}
    \iint_{\mathbb{R}^6} \varphi Q(f,f)\,dvdx = 0, \qquad (\varphi = 1,v_i, |v|^2),
\end{align*}we have for $t>0$
\begin{align}\label{Landau mass cons}
    \textup{M}(t)\eqdef \iint_{\mathbb{R}^6} f(t,x,v)\,dvdx = \textup{M}(0),\\ \label{Landau momentum cons}
    \textup{P}(t) \eqdef \iint_{\mathbb{R}^6} vf(t,x,v)\,dvdx = \textup{P}(0),\\ \label{Landau energy cons}
    \textup{KE}(t) \eqdef \iint_{\mathbb{R}^6} \frac{|v|^2}{2}f(t,x,v)\,dvdx = \textup{KE}(0).
\end{align}

In this paper, our main focus is the collisional Vlasov--Poisson system
\eqref{VPiL}, where the collision operator is given by the isotropic variant
\eqref{def.Qiso} of the Coulombic Landau operator. This operator can be viewed
as a simplified model obtained by replacing the anisotropic matrix appearing in
\eqref{kernel} with its velocity-averaged (radially symmetric) counterpart, in
particular neglecting the $v\otimes v$ contribution to the diffusion
structure. Although such an approximation removes the physically correct
directional degeneracy of the Coulomb kernel, it retains the key feature that
the potential term in velocity is governed by the inverse Laplacian
$(-\Delta_v)^{-1}$.

We also have the formal properties of the physical quantities for VPiL.
\begin{remark}[Formal conservation and monotonicity properties for VPiL]
    \label{formal conservation laws}Notice that the operator $Q_\textup{iso}$ exhibits several properties with respect to physical quantities. More precisely, we observe that
\begin{align*}
\int_{\mathbb{R}^3} Q_\textup{iso}(f,f)\,dv = \int_{\mathbb{R}^3} \left((-\Delta_v)^{-1}f\Delta_vf + f^2\right)\,dv = \int_{\mathbb{R}^3} \left(-f^2 + f^2\right) \,dv = 0.
\end{align*}This identity yields the following continuity equation:
\begin{align*}
\partial_t\rho + \nabla_x\cdot j = 0,
\end{align*}where the mass/charge density $\rho(t,x)$ and current density $j(t,x)$ are defined in terms of $f$ by
\begin{align*}
\rho(t,x) \eqdef \int_{\mathbb{R}^3} f(t,x,v)\,dv, \quad  j(t,x) \eqdef \int_{\mathbb{R}^3} vf(t,x,v)\,dv,
\end{align*}where we work in normalized units in which the particle charge and mass satisfy $\textup{e}=\textup{m}=1$.

In addition, the total mass/charge is formally conserved:
\begin{align*}
\frac{d}{dt}\textup{M}(t) \eqdef \frac{d}{dt}\iint_{\mathbb{R}^6} f(t,x,v)\,dvdx = 0.
\end{align*}
On the other hand, we note that the total energy increases in time. This was first observed in the spatially homogeneous case by \cite{iL_Gualdani_2021}. In the spatially inhomogeneous case, we observe that 
\begin{equation}
\begin{gathered}\label{iL energy cons}
    \frac{d}{dt}\textup{E}_{\textup{tot}}(t) \eqdef \frac{d}{dt}\left(\frac{1}{2} \iint_{\mathbb{R}^6} |v|^2f(t,x,v)\,dvdx \pm \frac{1}{2}\int_{\mathbb{R}^3} |\nabla(K\star \rho)(t,x)|^2\,dx\right)\\
    \eqdef \frac{d}{dt}\left(\textup{KE}(t) \pm \mathcal{E}_E(t)\right) =2\iint_{\mathbb{R}^6} (-\Delta_v)^{-1}f f\,dvdx\ge0,
    \end{gathered}
\end{equation}where $K(x) = \pm\frac{1}{4\pi|x|}$. Unlike the classical Vlasov–Poisson–Landau system, in which the total energy is conserved, the present model exhibits an increasing energy. 

Lastly, for $f\ge0$, the entropy dissipation relation formally holds as
\begin{align*}
\frac{d}{dt}\textup{H}(t) &\eqdef \frac{d}{dt}\iint_{\mathbb{R}^6} f(t)\log f(t)\,dxdv\\
&= \iint_{\mathbb{R}^6} \{-v\cdot\nabla_xf - E_f \cdot\nabla_vf + Q_\textup{iso}(f,f)\}\log f\,dvdx\\
&= \iint_{\mathbb{R}^6} Q_\textup{iso}(f,f) \log f\,dvdx = \iint_{\mathbb{R}^6} \{(-\Delta_v)^{-1}f\Delta_vf + f^2\}\log f\,dvdx\\
&= \iint_{\mathbb{R}^6} f\left\{2(-\Delta_v)^{-1}(\nabla_vf)\cdot \frac{\nabla_vf}{f} + (-\Delta_v)^{-1}f \left(\frac{\Delta_vf}{f} - \frac{|\nabla_vf|^2}{f^2}\right)\right\}\,dvdx\\
&= \iint_{\mathbb{R}^6} (-\Delta_v)^{-1}(\nabla_vf)\cdot\nabla_vf - \frac{(-\Delta_v)^{-1}f|\nabla_vf|^{2}}{f}\,dvdx\\
&= -\frac{1}{8\pi}\iint_{\mathbb{R}^9} \frac{f(u)f(v)}{|u-v|} |\nabla_u\log f(u) - \nabla_v\log f(v)|^2\,dudvdx\le0,
\end{align*}where $(-\Delta_v)^{-1}f(v) = \frac{1}{4\pi}\int_{\mathbb{R}^3}\frac{f(u)}{|u-v|}\,du$.
\end{remark}

\subsection{Singularity Formation in the Attractive Case}

Before turning to the nonlinear dynamics of the Vlasov--Poisson--isotropic Landau
model, it is useful to recall a classical diagnostic tool in kinetic theory for detecting
the breakdown of solutions: the \emph{mixed-moment argument}. This method tracks
the evolution of low-order spatial moments and attempts to force the second spatial
moment to become negative in finite time, thereby producing a contradiction. The
mechanism is closely tied to the presence of the nonlinear (gravitational) Poisson
term, which plays an essential role.

For instance, consider the spatially inhomogeneous classical Landau equation
without a self-consistent field. In this case, the mixed-moment argument alone never
produces finite-time breakdown. Indeed, if
$f(t,x,v)\ge 0$ is a regular solution, the second spatial moment
$$
\textup{I}(t) \eqdef \frac12 \iint_{\mathbb{R}^6} |x|^2 f(t,x,v)\, dvdx
$$
is an upward-opening quadratic polynomial in $t$ and therefore remains nonnegative
for all time. To see this, recall the total energy conservation law \eqref{Landau energy cons}. By direct
    differentiation of $\textup{I}(t)$, we find
    \begin{align*}
        \textup{I}'(t) &= \iint_{\mathbb{R}^6} (x \cdot v)\, f(t,x,v)\, dv\, dx,\\
        \textup{I}''(t) &= 2\,\mathrm{KE}(0)>0,\\
        \textup{I}'''(t) &= 0,
    \end{align*}
    where $\mathrm{KE}(0)$ denotes the initial kinetic energy as defined in \eqref{iL energy cons}, which is conserved in
    time. Hence $\textup{I}(t)$ is an upward-opening quadratic polynomial in $t$.

    For a quadratic polynomial to become negative at some finite time, its minimum
    value must be negative. The minimum of $\textup{I}$ occurs at
    $$
        t_* = -\frac{\textup{I}'(0)}{\textup{I}''(0)},
    $$
    and therefore
    $$
        \textup{I}(t_*) = \textup{I}(0) - \frac{\textup{I}'(0)^2}{2\,\textup{I}''(0)}.
    $$
    Thus, a necessary condition for $\textup{I}(t)$ to attain negative values is
    $$
        \textup{I}'(0)^2 - 2\,\textup{I}(0)\,\textup{I}''(0) > 0.
    $$
    However, by the Cauchy--Schwarz inequality we have
    $$
        \textup{I}'(0)^2 \le 2\,\textup{I}(0)\,\textup{I}''(0),
    $$
    so the above condition can never be satisfied. Consequently, $\textup{I}(t)$
    remains nonnegative for all $t$.

This shows that the mixed-moment argument cannot yield any mechanism
for finite-time singularity formation in the classical Landau equation without an attractive self-consistent field.

The situation changes markedly for the classical Vlasov--Poisson--Landau system with
attractive self-interaction. If the initial field energy exceeds twice the kinetic energy,
$\mathcal{E}_E(0) > 2\mathrm{KE}(0)$, then the conserved quantity $\textup{I}''(t)$ becomes
proportional to $(2\mathrm{KE}(0) - \mathcal{E}_E(0))<0$, and the quadratic polynomial
$\textup{I}(t)$ opens downward. In this regime, the mixed-moment argument does permit
finite-time singularity formation.

In contrast, the isotropic Landau equation with a gravitational self-consistent field does not conserve energy (see
\eqref{iL energy cons}), and the Taylor expansion of $\textup{I}(t)$ consequently contains a nontrivial cubic term.  The coefficient of this term is positive, reflecting the strictly increasing energy. This makes the analysis more delicate and requires a refined mixed-moment argument. In particular, we assume that the absolute value of the negative quadratic coefficient $|-(\mathcal{E}_E(0) - \textup{KE}(0))|$ is sufficiently large--equivalently, that $\mathcal{E}_E(0)\gg1$--relative to the cubic coefficient. Under this assumption, the cubic upper bound for $\textup{I}(t)$ can still be forced to become negative within a finite time interval. This refined argument will be carried out for the Vlasov--Poisson--isotropic Landau model in Section~\ref{Singularity formation}.

\section{Main Theorems and Strategies}
Now, we are in position to state the main theorems precisely. We begin by introducing the necessary function spaces. As in \cite{IHL_Luk_2019, IHL_Sanchit_2020}, the solution space $X_T^m$ is defined using $\langle x-vt \rangle$ weight.

\begin{definition}For each $T>0$, we denote the local phase space as
    \begin{align*}
            \mathcal{Q}_T \eqdef (0,T)\times\mathbb{R}^6.
     \end{align*}For $m\in\mathbb{R}$, $1<p_0<\frac{3}{2}$, $3<q_0<\frac{11}{3}$ and $w=\langle x\rangle\langle v\rangle^8$, the solution space $X_T^m$ is defined by
    \begin{align*}
            X_T^m  \eqdef \left\{f \in W_{p_0}^{1,2}(\mathcal{Q}_T)\cup W_{q_0,w}^{1,2}(\mathcal{Q}_T) \ \bigg|\  \sup_{(t,x,v)\in \overline{\mathcal{Q}_T}} \langle x-vt\rangle^m\langle v\rangle^m|f(t,x,v)|<\infty\right\},
    \end{align*}endowed with a norm
    \begin{align*}
            \lVert f\rVert_{X_T^m} \eqdef \lVert \langle x-vt\rangle^m\langle v\rangle^m f(t,x,v)\rVert_{L^\infty(\overline{\mathcal{Q}_T})}.
    \end{align*}Here, $W_{p_0}^{1,2}$ and $W_{q_0,w}^{1,2}$ denote the (weighted) parabolic Sobolev spaces as defined in \eqref{def.para.sob}. The Sobolev conditions in the definition of $X_T^m$ are imposed only to ensure sufficient regularity, while the norm $\|\cdot\|_{X_T^m}$ measures the weighted $L^\infty$ size of the solution.
    
To introduce the space for initial datum, for $m\in\mathbb{R}$, we define the weighted $L^\infty$ space by
\begin{align*}
    L^\infty_{m,2m}(\mathbb{R}^6) \eqdef \biggl\{f : \mathbb{R}^6\to\mathbb{R}\ \bigg|\ \sup_{(x,v)\in\mathbb{R}^6}\langle x\rangle^m \langle v\rangle^{2m}|f(x,v)|<\infty\biggr\},
\end{align*}where $\langle z\rangle \eqdef (1+|z|^2)^{1/2}$ for $z\in\mathbb{R}^3$. The associated norm is given by
\begin{align*}
    \lVert f\rVert_{L^\infty_{m,2m}(\mathbb{R}^6)} \eqdef \sup_{(x,v)\in\mathbb{R}^6} \langle x\rangle^m\langle v\rangle^{2m} |f(x,v)|.
\end{align*}In accordance with this space, we define the initial space, denoted by $\mathcal{C}^2_{m,0}(\mathbb{R}^6)$ as the set of functions $f\in C^2(\mathbb{R}^6)$ such that
\begin{align*}
    f,v\cdot\nabla_xf,\nabla_vf, \Delta_xf, \Delta_vf\in L^\infty_{m,2m}(\mathbb{R}^6),
\end{align*}and
\begin{align*}
    \langle x\rangle^m\langle v\rangle^{2m}\big(|f|+|v\cdot\nabla_xf| + |\nabla_vf| + |\Delta_xf| + |\Delta_vf|\big)\to0 \quad \text{ as }|x|,|v|\to\infty,
\end{align*}endowed with a norm
\begin{align*}
    \|f\|_{\mathcal{C}^2_{m,0}(\mathbb{R}^6)} \eqdef \|(|f| + |v\cdot\nabla_xf| + |\nabla_vf| + |\Delta_xf| + |\Delta_vf|)\|_{L^\infty_{m,2m}(\mathbb{R}^6)}.
\end{align*}

In fact, on any finite time interval [0,T], the quantity $\langle x-vt\rangle^{m}\langle v\rangle^{m}$ can be bounded in terms of $\langle x\rangle^{m}\langle v\rangle^{2m}$ (see Lemma \ref{embed}), which motivates the above choice of weights for the initial data.
\end{definition}

Using this functional setting, we make precise the definition of a weak solution. Let $m>3$, and suppose that $f : \overline{\mathcal{Q}_T} \to \mathbb{R}$ is a classical solution to the initial-value problem associated with \eqref{VPiL}, with initial data $f^\textup{in}$. Then, since the solution is locally integrable, for each $T>0$ and for any test function $\varphi\in C_c^{2}([0,T)\times\mathbb{R}^3\times\mathbb{R}^3)$, the left-hand side of \eqref{VPiL} can be written by
\begin{multline*}
\int_0^T\iint_{\mathbb{R}^3\times\mathbb{R}^3} (\partial_tf + v\cdot\nabla_xf + E_f\cdot\nabla_vf)\varphi\,dxdvds\\
= -\iint_{\mathbb{R}^3\times\mathbb{R}^3} f^\textup{in}\varphi(0)\,dxdv - \int_0^T\iint_{\mathbb{R}^3\times\mathbb{R}^3} f(\partial_t\varphi + v\cdot\nabla_x\varphi + E_f\cdot\nabla_v\varphi)\,dxdvds.
\end{multline*}Using integration by parts and the self-adjointness of $(-\Delta_v)^{-1}$, the right-hand side of \eqref{VPiL} can be written as
\begin{gather*}
\int_0^T \iint_{\mathbb{R}^3\times\mathbb{R}^3} f\Delta_v[(-\Delta_v)^{-1}f\varphi] + f^2\varphi\,dxdvds \\
=\int_0^T \iint_{\mathbb{R}^3\times\mathbb{R}^3} f[2\nabla_v\{(-\Delta_v)^{-1}f\}\cdot\nabla_v\varphi + (-\Delta_v)^{-1}f\cdot\Delta_v\varphi]\,dxdvds.
\end{gather*}
Here, we rely on the sufficient regularity of $f$, specifically its local H\"older continuity in $v$-variable, to justify $\Delta_v(-\Delta_v)^{-1}f = -f$. Moreover, assuming that $f$ is sufficiently integrable with appropriate polynomial decay at infinity in the $v$-variable, we obtain the field representation:
\begin{align}\label{D potential}
\nabla_v\{(-\Delta_v)^{-1}f\}(v) = \frac{1}{4\pi} \int_{\mathbb{R}^3} \frac{u-v}{|u-v|^3}f(u)\,du.
\end{align}Combining the results above, we define the integral solution to \eqref{VPiL} in the following sense:
\begin{definition}[Weak solution]\label{Weak formulation}
    Let $T>0$ and $m>3$. A function $f\in X_T^m$ is said to be a weak solution to the initial-value problem for \eqref{VPiL} in $\overline{\mathcal{Q}_T}$ if, for any test function $\varphi\in C_c^2([0,T)\times\mathbb{R}^3\times\mathbb{R}^3)$, the following identity holds:
\begin{multline}\label{weak formulation}
\int_0^T \iint_{\mathbb{R}^6} f\left[\partial_t\varphi + v\cdot\nabla_x\varphi + E_f \cdot\nabla_v\varphi + 2\nabla_v\{(-\Delta_v)^{-1}f\}\cdot\nabla_v\varphi\right] \,dxdvds\\
+ \int_0^T\iint_{\mathbb{R}^6}f(-\Delta_v)^{-1}f\cdot \Delta_v\varphi\,dxdvds
=-\iint_{\mathbb{R}^6}f^\textup{in} \varphi(0)\,dxdv,
\end{multline}where we use the explicit formula for $\nabla_v\{(-\Delta_v)^{-1}f\}$ as derived in \eqref{D potential}.
\end{definition}

Then our first main theorem on local existence follows.

\begin{theorem}[Local existence with polynomially decaying initial data]\label{thm.main1}
     We simultaneously consider the plasma and gravitational cases $E_f = -(\nabla K\star_x\rho_f)$, for which $K(x)=\pm\frac{1}{4\pi|x|}$. Let $m>3$, $1<p_0<\frac{3}{2}$ and $3<q_0<\frac{11}{3}$, where $p_0,q_0$ denote the integrability exponent associated with the (weighted) Sobolev space in $X_T^m$. Then there exists $C_0 = C_0(m) > 0$ such that, if the non-negative initial data 
$f^\textup{in} \in  \mathcal{C}^2_{m,0}(\mathbb{R}^6)$ satisfies
\begin{align}\label{smallness cond initial}
    \|f^\textup{in}\|_{\mathcal{C}^2_{m,0}(\mathbb{R}^6)} \le C,
\end{align}
for some $C \in [0,C_0]$, then there exists a time 
$T = T\!\left(\|f^\textup{in}\|_{\mathcal{C}^2_{m,0}},\, m\right) > 0$
for which the initial-value problem \eqref{VPiL} admits a non-negative weak solution $f \in X_T^m$.
\end{theorem}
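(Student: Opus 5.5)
The plan is to build the solution by an iteration scheme that linearizes both the collision operator and the field, and to close uniform bounds in the weighted norm $\|\cdot\|_{X_T^m}$. Set $f^0(t,x,v)\eqdef f^\textup{in}(x-vt,v)$. Given $f^n\ge0$ in $X_T^m$, define $f^{n+1}$ as the solution of the linear non-divergence-form equation
\begin{align*}
\partial_t f^{n+1} + v\cdot\nabla_x f^{n+1} + E_{f^n}\cdot\nabla_v f^{n+1} = a^n\,\Delta_v f^{n+1} + f^n f^{n+1}, \qquad f^{n+1}(0)=f^\textup{in},
\end{align*}
with diffusion coefficient $a^n\eqdef(-\Delta_v)^{-1}f^n$. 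For $f^n\ge0$ we have $a^n\ge0$, so the equation is degenerate parabolic with a bounded zeroth-order coefficient. Non-negativity of $f^{n+1}$ then follows from the maximum principle.

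To solve the linear problem I will regularize $a^n$ to $a^n+\varepsilon$ so that it becomes uniformly parabolic in $v$. The existence of a strong solution in $W^{1,2}_{p_0}(\mathcal{Q}_T)$ and in $W^{1,2}_{q_0,w}(\mathcal{Q}_T)$ comes from parabolic $L^p$ theory for kinetic operators. The coefficient $E_{f^n}$ is bounded, since $\rho_{f^n}\in L^1\cap L^\infty$ when $m>3$. The coefficient $a^n$ is H\"older continuous in $v$, because it is a Newtonian potential of a bounded, decaying function. The restriction $q_0<11/3$ and the weight $w$ presumably enter here to keep the coefficients in the admissible classes. Afterwards I let $\varepsilon\to0$ using bounds that are uniform in $\varepsilon$.

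The core step is the weighted $L^\infty$ bound. I will apply the maximum principle to $g\eqdef\langle x-vt\rangle^m\langle v\rangle^m f^{n+1}$. The transport part $\partial_t+v\cdot\nabla_x$ annihilates $\langle x-vt\rangle$. The commutators generated by $E\cdot\nabla_v$ and by $a\Delta_v$ acting on the weights produce terms of the following sizes:
\begin{itemize}
\item first-order terms bounded by $m\,t\,|E|$ and $m\,t\,a\,|\nabla_v g|$, where the factor $t$ arises because $\nabla_v\langle x-vt\rangle\sim t$;
\item zeroth-order terms bounded by $C(m)(1+t^2)\,a\,g$;
\item the reaction term $f^n g$.
\end{itemize}
We have $\|a^n\|_\infty\lesssim\|f^n\|_{X^m_T}$, and $\|E_{f^n}\|_\infty\lesssim\|f^n\|_{X_T^m}$ with constants uniform for $t\le T$. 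The comparison principle then gives
\begin{align*}
\|f^{n+1}\|_{X^m_T}\le \|f^0\|_{X_T^m}\,\exp\bigl(C(m)\,T\,(1+T^2)\,\|f^n\|_{X^m_T}\bigr).
\end{align*}
Lemma \ref{embed} converts $\|f^0\|_{X_T^m}$ into $\lesssim_T\|f^\textup{in}\|_{L^\infty_{m,2m}}$. Choosing $T$ small relative to $\|f^\textup{in}\|$ closes the bound $\|f^n\|\le 2\|f^\textup{in}\|$.

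Derivative bounds come next. Applying the same argument to $\partial_t f$, or to the quantities $v\cdot\nabla_x f$, $\nabla_v f$, $\Delta f$ appearing in $\mathcal{C}^2_{m,0}$, gives the uniform regularity needed for compactness. The smallness condition $C\le C_0$ is used here, because $\nabla_v a$ and $\nabla_v E$ involve $f$ itself and the derivative estimates are superlinear.

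Passage to the limit is as follows. The uniform $X^m_T$ bounds and the parabolic Sobolev bounds give local compactness, say by Aubin--Lions or an Arzel\`a--Ascoli argument on compacts together with tightness from the weights. This lets me pass to the limit in the nonlinear terms of \eqref{weak formulation}. The convergences $f^n\to f$, $(-\Delta_v)^{-1}f^n\to(-\Delta_v)^{-1}f$ and $E_{f^n}\to E_f$ hold by dominated convergence, using the $\langle v\rangle^{-m}$ and $\langle x-vt\rangle^{-m}$ decay with $m>3$. Non-negativity passes to the limit as well.

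The main obstacle I expect is the derivative (and hence compactness) estimate near the degenerate set where $a^n$ is small. In the weighted $L^\infty$ argument the commutator terms of order $t^2 a$ are harmless only for short time, while the derivative estimates feed $\nabla_v a\sim\nabla_v(-\Delta_v)^{-1}f$ and $f$ back superlinearly. If the maximum-principle approach for derivatives fails, I would first try to recover compactness directly from the $W^{1,2}_{p_0}$ bounds, using $\varepsilon$-independent $L^p$ estimates, or else from velocity averaging lemmas.
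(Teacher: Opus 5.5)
Your weighted $L^\infty$ step is sound. It is essentially the paper's comparison argument (Lemma~\ref{local comparison}) with the same transport-invariant weight $\langle x-vt\rangle^{-m}\langle v\rangle^{-m}$. Your bookkeeping is even sharper: your exponent $C(m)\,T(1+T^2)\|f^n\|_{X^m_T}$ vanishes as $T\to0$, so it closes for arbitrary data on a short interval. The paper's exponent $M(T+1)^3(C_g+1)$ stays of size $M(C_g+1)$ as $T\to0$. That is why the paper needs $\|f^{\textup{in}}\|_{\mathcal{C}^2_{m,0}}\le C_0$ and the fixed-point analysis of Lemmas~\ref{uniform bound} and \ref{Phi property}; smallness enters there only, not in any derivative estimate.

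The genuine gap is the regularity and compactness step. You leave it open, and the routes you sketch do not supply it. The statement asks for $f\in X_T^m$, which requires $f\in W^{1,2}_{p_0}(\mathcal{Q}_T)\cup W^{1,2}_{q_0,w}(\mathcal{Q}_T)$, i.e.\ $\partial_tf$ and the full Hessian $\nabla^2_{x,v}f$ in $L_{p_0}$ or $L_{q_0,w}$. Regularizing only $a^n\mapsto a^n+\varepsilon$ leaves a kinetic operator with no diffusion in $x$. Kinetic $L^p$ theory controls $\nabla_v^2f$ and $(\partial_t+v\cdot\nabla_x)f$ (plus fractional $x$-regularity), but not $\nabla_x^2f$ or $\partial_tf$ separately. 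Its constants also depend on the floor $\varepsilon$, which cannot be removed: $a^n=(-\Delta_v)^{-1}f^n$ behaves like $\rho_{f^n}(t,x)/(4\pi|v|)$ for large $|v|$, and $\rho_{f^n}\to0$ as $|x|\to\infty$. So there are no $\varepsilon$-independent $L^p$ bounds to fall back on.

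The maximum-principle derivative bounds do not close either. Differentiating $a\Delta_vf$ in $x$ or $v$ produces $(\nabla a)\,\Delta_vf$, so the equation for first derivatives contains second derivatives, the one for second derivatives contains third, and so on. A bare maximum principle cannot absorb this loss; a Bernstein-type argument would need the good term $a|\nabla\nabla_vf|^2$, which degenerates with $a$.

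Velocity averaging could give strong convergence of $E_{f^n}$ and $(-\Delta_v)^{-1}f^n$. It yields neither the $W^{1,2}$ membership nor the identification of the limits of $f^n$ and $f^{n+1}$, which your passage to the limit silently assumes. That identification needs either a contraction estimate (again requiring bounds on $\nabla_vf^n$, $\Delta_vf^n$) or a scheme in which a single sequence is passed to the limit. The paper's write-up is also silent on this point.

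For the regularity, the paper uses a different device:
\begin{itemize}
\item It adds $\frac1n\Delta_{x,v}$ and cuts off the drifts ($\chi(v/n)v$, $\chi(x/n)E$). Each linear problem is then uniformly parabolic in all of $(x,v)$, with bounded, parabolically H\"older coefficients, and is solved in $C^{2,\alpha}_{\textup{para}}\cap W^{1,2}_{p_0}\cap W^{1,2}_{q_0,w}$ by the method of continuity.
\item This is where $q_0\in(3,\frac{11}{3})$ and $w=\langle x\rangle\langle v\rangle^8$ enter, through the time-H\"older continuity of $E_g$ in Lemma~\ref{Holder cont of E_g}.
\item The datum enters through the source $\mathcal{L}_{1/n,f^{n-1}}(f^{\textup{in}}_{1/n})I^{1/n}$, supported in a time window of length $O(n^{-\kappa})$. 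Its small $L_{p_0}$ norm is meant to offset the polynomial growth in $n$ of the $W^{1,2}_{p_0}$ constants.
\item This gives $n$-uniform $W^{1,2}_{p_0}$ bounds, membership of the limit by reflexivity, and a.e.\ convergence by Rellich--Kondrachov.
\end{itemize}
You need this mechanism, or a genuine substitute delivering both the $W^{1,2}$ membership and the convergence of the nonlocal coefficients, before your passage to the limit is justified.
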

\begin{remark}\label{thm.main1 maximal existence time}
    An existence time $T$ in Theorem \ref{thm.main1} is determined by Lemma \ref{uniform bound} and Remark \ref{T bd} as 
    \begin{align*}
    T \le 
    \left\{
        \frac{1}{M}
        \log\!\left(
            \frac{1}{
                e^{-e}M
                \|f^{\mathrm{in}}\|_{\mathcal{C}^2_{m,0}}C(m)
            }
        \right)
    \right\}^{\!\frac{1}{3}}
    - 1.
\end{align*}
\end{remark}

Our second main result is on the finite-singularity formation and the collapse of mass in the gravitational case. Hence, we restrict our attention to the gravitational attractive case from now on; namely, we put $K(x) = -\frac{1}{4\pi|x|}$. Before we state the finite-time singularity for \eqref{VPiL}, we define the relevant physical quantities associated with $f$:
\begin{definition}\label{phys quantity}
    Let $0\le f\in L^\infty([0,T]; (L^1_2\cap L^\infty)_{x}(L^1_2\cap L^\infty)_v)$ be a solution to \eqref{VPiL}. Then we define the scalar quantities:
    \begin{align}\label{x moment}
        \textup{I}(t) &\eqdef \frac{1}{2} \iint_{\mathbb{R}^6} |x|^2f(t,x,v)\,dvdx,\\
        \textup{KE}(t) &\eqdef \frac{1}{2} \iint_{\mathbb{R}^6} |v|^2f(t,x,v)\,dvdx,\label{tot kin energy}\\
        \mathcal{E}_E(t) &\eqdef -\frac{1}{2} \int_{\mathbb{R}^3} (K\star_x\rho_f)\rho_f\,dx = \frac{1}{2}\int_{\mathbb{R}^3} |\nabla_xK\star\rho_f|^2\,dx\ge0,\label{tot E energy}
    \end{align}where $K(x) = -\frac{1}{4\pi|x|}$ and $\rho_f(t,x) = \int_{\mathbb{R}^3}f(t,x,v)\,dv$.
\end{definition}

Moreover, we introduce a collection of velocity distribution, where the second spatial moment \eqref{x moment} and its derivatives are well-defined if $0\le f\in X_T^m$ is a weak solution to \eqref{VPiL}.

\begin{definition}\label{adm class}
    We say a velocity distribution function $f$ belongs to an admissible class $A_2$ if $f$ satisfies the following:
    \begin{itemize}
        \item The distributional derivatives $\partial_t f$ and $\nabla_x f$ admit representatives (still denoted by $\partial_t f$ and $\nabla_x f$) that are defined pointwise on $\mathcal Q_T$.
        \item There exist the real-valued dominating functions $g_1\in (L^1_2\cap L_2^\infty)_{x,v}\cap (L^1\cap L^\infty)_xL^1_v$ and $g_2\in (\dot{L}_1^\infty\cap \dot{L}^1_1)_v$ such that
        \begin{align}
            |\partial_tf(t,x,v)| &\le g_1(x,v), \quad \forall (t,x,v)\in\mathcal{Q}_T,\label{Dtf dom}\\
            |\nabla_xf(t,x,v)| &\le g_2(v), \quad\forall (t,x,v)\in\mathcal{Q}_T.\label{Dxf dom}
        \end{align}
        \item For each $t\in[0,T]$,
        \begin{align}
            |v\cdot\nabla_xf(t)| + |\nabla_vf(t)| \in (L^1_2)_{x,v}.\label{vDxf Dvf L1w}
        \end{align}
    \end{itemize}The subscript 2 in $A_2$ refers to the control of moments up to second order. 
\end{definition}

The finite-time singularity formation for \eqref{VPiL} is described in the following theorem.

\begin{theorem}[Finite-time singularity formation for the gravitationally attractive case]
\label{thm.main2}
Consider the gravitational case $E_f = -(\nabla K\star_x\rho_f)$, for which
$$
K(x) = -\frac{1}{4\pi|x|}.
$$
Let $m>6$, and let $f \in X_T^m\cap A_2$ be the nonnegative weak solution to
\eqref{VPiL} and \eqref{smallness cond initial} constructed in
Theorem~\ref{thm.main1}. Assume that the initial datum $f^{\textup{in}}$
satisfies $\textup{I}'(0)<0$ and
\begin{align}\label{inertia < energy}
\mathcal{E}_E(0) - 2\textup{KE}(0)
\ge
C\left(3\frac{\textup{I}(0)}{|\textup{I}'(0)|} +k\right)
\frac{\lVert f^{\textup{in}}\rVert_{L^1_{x,v}}}{\|f^{\textup{in}}\|_{\mathcal{C}^2_{m,0}}},
\end{align}
for some positive constants $C=C(m)$ and $k=k(m)$.
Then the maximal non-negativity time is finite, namely
$$
t_{\max}
\eqdef
\sup\{\tau>0:\ f(\tau,x,v)\ge 0
\ \text{for all }(x,v)\in\mathbb{R}^6\}
<\infty .
$$
In particular, the nonnegative solution constructed in
Theorem~\ref{thm.main1} cannot be extended beyond $t_{\max}$ while
preserving nonnegativity, which signals the onset of singular behavior
in finite time.
\end{theorem}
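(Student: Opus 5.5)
We argue by contradiction via a refined mixed-moment (virial) argument applied to the second spatial moment $\textup{I}(t)$ of \eqref{x moment}. Suppose $t_{\max}=\infty$, so the nonnegative weak solution exists and stays nonnegative on every interval $[0,T]$. The membership $f\in X_T^m\cap A_2$ with $m>6$ is used only to justify the moment computations. The dominating functions \eqref{Dtf dom}--\eqref{Dxf dom} and the weighted integrability \eqref{vDxf Dvf L1w} allow us to differentiate $\textup{I}$ under the integral sign. They also let us test the weak formulation \eqref{weak formulation} against the unbounded weights $\frac12|x|^2$, $x\cdot v$, and $|v|^2$, after a standard cutoff $\varphi_R=\chi(x/R)\chi(v/R)$ and passage to the limit $R\to\infty$. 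With these justifications, we first derive the identities
\begin{align*}
\textup{I}'(t)=\iint_{\mathbb{R}^6} x\cdot v\, f\,dvdx,\qquad
\textup{I}''(t)=2\textup{KE}(t)-\mathcal{E}_E(t).
\end{align*}
In the second identity, the collision term contributes nothing because $\int Q_{\mathrm{iso}}\,dv=0$ and $\int v\,Q_{\mathrm{iso}}\,dv=0$ (the weight $x\cdot v$ is affine in $v$). The gravitational term $\iint x\cdot E_f f$ equals $-\mathcal{E}_E$ by the symmetrization of $x\cdot\frac{x-y}{|x-y|^3}$.

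Next we compute $\textup{I}'''$. Since $\textup{KE}-\mathcal{E}_E$ is the gravitational total energy, the energy law \eqref{iL energy cons} gives
\begin{align*}
\frac{d}{dt}\bigl(\textup{KE}-\mathcal{E}_E\bigr)=2\iint_{\mathbb{R}^6}(-\Delta_v)^{-1}f\,f\,dvdx=:D(t)\ge 0,
\end{align*}
presumably Lemma~\ref{I'''}. Hence
\begin{align*}
\textup{I}'''=2\tfrac{d}{dt}\textup{KE}-\tfrac{d}{dt}\mathcal{E}_E=\tfrac{d}{dt}\textup{KE}+D .
\end{align*}
The term $\frac{d}{dt}\textup{KE}$ must be controlled separately by a dissipation-type quantity. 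Testing against $|v|^2$ gives $\frac{d}{dt}\textup{KE}=\iint v\cdot E_f f+D$, and the field term equals $\frac{d}{dt}\mathcal{E}_E$. The combination is therefore bounded by a multiple of $\sup_t D(t)$ plus possibly $|\iint j\cdot E_f|$. We bound these by the a priori weighted $L^\infty$ norm, using Hardy--Littlewood--Sobolev in $v$ and the mass bound: $D(t)\lesssim \|f(t)\|_{L^\infty_{x}L^{\infty}_{v,w}}\,\|f(t)\|_{L^1_{x,v}}$. The uniform bound from Lemma~\ref{uniform bound} controls $\|f\|_{X_T^m}$ by $C(m)\|f^{\mathrm{in}}\|_{\mathcal{C}^2_{m,0}}$. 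Mass conservation then yields $\textup{I}'''(t)\le C_1:=C(m)\,\|f^{\mathrm{in}}\|_{\mathcal{C}^2_{m,0}}\|f^{\mathrm{in}}\|_{L^1}$-type constant. The exact arrangement of the norms should match the ratio appearing in \eqref{inertia < energy}, and the constants $C,k$ are absorbed there.

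Integrating three times gives the cubic upper bound
\begin{align*}
\textup{I}(t)\le \textup{I}(0)+\textup{I}'(0)t-\tfrac12\bigl(\mathcal{E}_E(0)-2\textup{KE}(0)\bigr)t^2+\tfrac{C_1}{6}t^3 .
\end{align*}
We evaluate this at the time $t_0=3\textup{I}(0)/|\textup{I}'(0)|$ or a comparable choice. There, $\textup{I}(0)+\textup{I}'(0)t_0=-2\textup{I}(0)<0$. The hypothesis \eqref{inertia < energy} with $k$ chosen suitably guarantees that $\tfrac12(\mathcal{E}_E(0)-2\textup{KE}(0))t_0^2\ge \tfrac{C_1}{6}t_0^3$, so the right-hand side is strictly negative. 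This contradicts $\textup{I}(t_0)\ge0$, which holds as long as $f\ge0$ on $[0,t_0]$. Hence nonnegativity must fail before $t_0$, i.e.\ $t_{\max}\le t_0<\infty$.

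The main obstacle is the uniform bound on $\textup{I}'''$. The collision term does not conserve energy, and controlling $D(t)$ over $[0,t_0]$ requires the weighted $L^\infty$ bound to persist on that whole interval. This bound is only available on the local existence interval from Theorem~\ref{thm.main1}, so the argument requires $t_0\le T$ or an a priori assumption on the solution. We first try to show that, under the hypothesis, $t_0$ fits inside the lifespan in Remark~\ref{thm.main1 maximal existence time}. Failing that, we impose the persistence of the $X^m$ bound, which is implicit in $f\in X_T^m$ up to $t_{\max}$, and make the constant $k$ absorb the resulting dependence. A secondary technical point is justifying the cutoff limits in the weak formulation for the weights $|x|^2$ and $|v|^2$, which is exactly what $m>6$ and the class $A_2$ are designed for.
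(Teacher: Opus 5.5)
Your skeleton matches the paper's: the virial identities $\textup{I}'=\iint x\cdot v\,f$ and $\textup{I}''=2\textup{KE}-\mathcal{E}_E$, the energy law, a cubic upper bound for $\textup{I}$, and a contradiction with $\textup{I}\ge 0$. The gap is in the one quantitative step, the bound on the cubic coefficient.

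You write $\textup{I}'''=\int j\cdot E_f\,dx+2D$, but you only estimate $D$; the field term is left as ``plus possibly $|\iint j\cdot E_f|$''. The direct estimate $\|E_f\|_{L^\infty}\|j\|_{L^1}\lesssim\|f\|_{X_T^m}^2$ is quadratic in the weighted sup-norm. It therefore does not give a coefficient of the form $C(m)\|f^{\textup{in}}\|_{L^1}/\|f^{\textup{in}}\|_{\mathcal{C}^2_{m,0}}$, which is what \eqref{inertia < energy} is built on. Getting mass times $\|f\|_{X_T^m}$ would require interpolating both $\|E_f\|_{L^\infty}$ and $\|j\|_{L^1}$ between $\|\rho\|_{L^1}$ and the sup-norm, and you do not supply this.

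Your scaling is also reversed. Lemma~\ref{uniform bound} gives $\|f\|_{X_T^m}\le c_0M(m)^{-2}\|f^{\textup{in}}\|_{\mathcal{C}^2_{m,0}}^{-1}$, not $\lesssim\|f^{\textup{in}}\|_{\mathcal{C}^2_{m,0}}$. This inverse dependence is exactly why the ratio $\|f^{\textup{in}}\|_{L^1}/\|f^{\textup{in}}\|_{\mathcal{C}^2_{m,0}}$ appears in the hypothesis, so saying the norms ``should match'' does not close the step.

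The paper never bounds $\textup{I}'''$ at all. You already observed that $\int j\cdot E_f\,dx=\frac{d}{dt}\mathcal{E}_E$; the fix is to integrate that term exactly rather than bound it. Lemma~\ref{I'''} gives $\frac{d}{dt}\bigl(\textup{I}''+\frac12\int\rho\,K\star\rho\,dx\bigr)=4\iint(-\Delta_v)^{-1}f\,f\,dvdx\le C_1$ (Remark~\ref{E' est}). Integrating,
\[
\textup{I}''(t)\le \textup{I}''(0)+C_1t+\mathcal{E}_E(t)-\mathcal{E}_E(0)\le \textup{I}''(0)+C_1t+kC_1 .
\]
Here $\sup_t\mathcal{E}_E(t)\le\frac12\sup_t\|K\star\rho(t)\|_{L^\infty}\|\rho(t)\|_{L^1}\le kC_1$, which is again linear in $\|f\|_{X_T^m}$ times the mass. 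That is where $k$ comes from: it bounds the field energy at later times. It is not a slack constant that can absorb time- or data-dependence, since $k$ depends only on $m$. Integrating twice gives the cubic $g(t)=\frac{C_1}{6}t^3+\frac{kC_1+\textup{I}''(0)}{2}t^2+\textup{I}'(0)t+\textup{I}(0)$.

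Two points in your favor. First, evaluating at $t_0=3\textup{I}(0)/|\textup{I}'(0)|$ is simpler than the paper's discriminant/critical-point computation, and it works for the paper's $g$. The hypothesis gives $kC_1+\textup{I}''(0)+\frac{C_1}{3}t_0\le-\frac23 C_1t_0$, hence $g(t_0)<-2\textup{I}(0)<0$.

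Second, your concern that the weighted bounds are only known on the existence interval $[0,T]$ is legitimate. The paper applies Lemma~\ref{Linfty est f} on all of $[0,t_2]$ without checking $t_2\le T$, so it also tacitly assumes the bound persists. This is not a deficit relative to the paper, but it cannot be repaired by enlarging $k$.
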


As a corollary of Theorem~\ref{thm.main2}, we show that the solution
collapses to the spatial origin at time $t=t_{\max}$, provided that the
solution admits a measure-valued continuation in $\mathcal{M}^1_2$.
We first introduce the measure space used in the statement of the
corollary.

\begin{definition}
We say that a Radon measure $\mu$ on $\mathbb{R}^3_x\times\mathbb{R}^3_v$
(with respect to the Lebesgue $\sigma$-algebra) belongs to
$\mathcal{M}^1_2$ if
$$
\int_{\mathbb{R}^3\times\mathbb{R}^3}
(1+|x|^2+|v|^2)\,d\mu <\infty .
$$
\end{definition}

We then obtain the following collapse result.

\begin{corollary}[Collapse of mass]\label{collapse}
Suppose that the nonnegative solution $f$ of \eqref{VPiL} considered in
Theorem~\ref{thm.main2} exists on the time interval $[0,t_{\max})$.
Assume further that the associated finite Radon measure
$t\mapsto g_t\in \mathcal{M}^1_2$ for \eqref{VPiL} is defined on
$[0,t_{\max}]$, that
$$
g_t(dx\,dv)=g(t,x,v)\,dx\,dv
$$
admits a density on $[0,t_{\max})\times\mathbb{R}^6$, and that
$f=g$ on $[0,t_{\max})\times\mathbb{R}^6$.
Then the terminal measure $g_{t_{\max}}$ is singular with respect to the
Lebesgue measure on $\mathbb{R}^6$, and its support satisfies
$$
\operatorname{supp} g_{t_{\max}}
\subset
\{x=0\}\times\mathbb{R}^3_v .
$$
\end{corollary}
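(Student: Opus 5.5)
The plan is to pass to the limit $t\uparrow t_{\max}$ in the moment quantities controlled in the proof of Theorem~\ref{thm.main2}. The argument there yields a cubic upper bound $\textup{I}(t)\le P(t)$ on $[0,t_{\max})$, where $P(t)=\textup{I}(0)+\textup{I}'(0)t+\tfrac12\textup{I}''(0)t^2+\tfrac16 c\,t^3$. Hypothesis \eqref{inertia < energy} forces $P$ to vanish at some finite $t_1$. Since $\textup{I}\ge0$ as long as $f\ge0$, we get $t_{\max}\le t_1$.

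\emph{Step 1: $\textup{I}(t)\to0$ as $t\uparrow t_{\max}$.} I would first show that $t_{\max}$ is characterized by this limit. Suppose instead that $\liminf_{t\uparrow t_{\max}}\textup{I}(t)>0$. The uniform weighted bounds in $X_T^m\cap A_2$ would then allow the solution to be continued while remaining nonnegative (via Theorem~\ref{thm.main1}, restarted near $t_{\max}$). That contradicts maximality. Concretely, I will reuse the fact from the proof of Theorem~\ref{thm.main2} that the comparison bound forces $\textup{I}$ to hit zero at the breakdown time. If this characterization is not directly available, a fallback is to run the argument with $P$ chosen so that $t_{\max}$ coincides with the first zero of the majorant. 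I expect Step 1 to be the main obstacle: one must rule out that nonnegativity fails for a reason other than concentration.

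\emph{Step 2: weak continuity of the moment.} The measures $g_t$ lie in $\mathcal{M}^1_2$ on $[0,t_{\max}]$, with uniformly bounded second moments (the energy bounds from Lemma~\ref{I'''} give this). Tightness then makes $t\mapsto g_t$ narrowly continuous at $t_{\max}$, and the weak formulation with test functions cut off in $(x,v)$ identifies the limit. Take the truncated weight $\phi_R(x)=\min(|x|^2,R)$, which is bounded and continuous. Narrow convergence and lower semicontinuity give
\begin{align*}
\frac12\int \phi_R(x)\,dg_{t_{\max}}\le \liminf_{t\uparrow t_{\max}}\frac12\int\phi_R\, f(t)\,dxdv\le\liminf_{t\uparrow t_{\max}}\textup{I}(t)=0.
\end{align*}
Letting $R\to\infty$ yields $\int|x|^2\,dg_{t_{\max}}=0$.

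\emph{Step 3: support and singularity.} Since $g_{t_{\max}}\ge0$ and $\int |x|^2\,dg_{t_{\max}}=0$, we get $g_{t_{\max}}(\{x\neq0\})=0$. Hence $\operatorname{supp}g_{t_{\max}}\subset\{x=0\}\times\mathbb{R}^3_v$. Mass conservation (Remark~\ref{formal conservation laws}), passed to the limit with the tightness from Step 2, gives total mass $\|f^{\mathrm{in}}\|_{L^1}>0$. The measure is therefore nonzero and concentrated on a Lebesgue-null set, so it is singular with respect to Lebesgue measure on $\mathbb{R}^6$.
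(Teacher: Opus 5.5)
\textbf{Step~1 is a genuine gap, and it carries the whole content of the corollary.} Your overall route is the paper's. The paper also takes $\lim_{t\to t_{\max}^-}\textup{I}(t)=0$ as its input and then uses $\textup{I}(t)\ge\frac{\varepsilon^2}{2}\,g_t(\{|x|\ge\varepsilon\})$; your truncation $\min(|x|^2,R)$ plays the same role. But neither of your justifications of the limit works.

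\begin{itemize}
\item Restarting Theorem~\ref{thm.main1} at some $t_0$ near $t_{\max}$ requires $f(t_0)$ to be small in $\mathcal{C}^2_{m,0}$. The class $X_T^m\cap A_2$ only carries the weight $\langle x-vt\rangle^m\langle v\rangle^m$, which does not even control $\langle x\rangle^m\langle v\rangle^{2m}f(t_0)$, let alone the derivative terms. Nothing in the paper ties the continuation time to $\textup{I}$.
\item The proof of Theorem~\ref{thm.main2} contains no statement that $\textup{I}$ vanishes at breakdown. From $\textup{I}\le P$ you only get $t_{\max}\le t_1$ and $0\le\limsup_{t\to t_{\max}^-}\textup{I}(t)\le P(t_{\max})$, which is positive if $t_{\max}<t_1$.
\item Choosing a majorant that vanishes at $t_{\max}$ presupposes the very limit you want.
\end{itemize}

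There is also a structural obstruction. The weighted $L^\infty$ bound of Lemma~\ref{Linfty est f} is exactly what produces the constants in the cubic majorant. It bounds $\rho$ uniformly, and then
\[
\|f^{\mathrm{in}}\|_{L^1}\le \tfrac{4\pi}{3}\varepsilon^3\sup_{t}\|\rho(t)\|_{L^\infty}+\tfrac{2}{\varepsilon^2}\,\textup{I}(t)\qquad\text{for all }\varepsilon>0
\]
forces $\inf_t\textup{I}(t)>0$. So $\textup{I}\to0$ is possible only if that control degenerates at $t_{\max}$. That is precisely where your continuation argument and the majorant have no input.

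The missing ingredient is a breakdown criterion: nonnegativity (or the solution) can be lost only through concentration. The paper does not supply it either. It cites Theorem~\ref{thm.main2} together with the continuity of $\textup{I}$ from Lemma~\ref{intch}, which, as above, yield only $t_{\max}\le t_1$. So this limit must be proved separately or added as a hypothesis.

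\textbf{Two smaller points in Step~2.} First, tightness gives precompactness, not continuity of $t\mapsto g_t$ at $t_{\max}$. That continuity has to be read into the hypothesis that $g_t$ is a measure-valued solution on $[0,t_{\max}]$, as the paper does tacitly.

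Second, \eqref{Etot id} does not give uniform $v$-moments. It makes $\textup{KE}-\mathcal{E}_E$ nondecreasing. Meanwhile $\textup{I}\to0$ puts at least half the mass $M=\|f^{\mathrm{in}}\|_{L^1}$ into balls $B_\varepsilon$ with $\varepsilon\to0$, so $\mathcal{E}_E\ge M^2/(64\pi\varepsilon)\to\infty$. Hence $\textup{KE}\to\infty$ in exactly the collapse scenario. The same computation shows that the bound $\mathcal{E}_E\lesssim kC_1$ used to build the majorant cannot survive up to a collapse.

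None of this affects the support claim. Vague continuity tested on the open sets $\{|x|>\varepsilon\}$ suffices. This is slightly cleaner than the paper's passage to $t_{\max}$ on the closed sets $\{|x|\ge\varepsilon\}$, where narrow convergence gives the wrong inequality. Control of escaping $v$-mass is needed only to guarantee $g_{t_{\max}}(\mathbb{R}^6)=M>0$.
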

\begin{remark}
It is worth emphasizing that the condition
$$
\mathcal{E}_E(0)> 2\textup{KE}(0)
$$
in Theorem~\ref{thm.main2} plays a crucial role in the onset of
finite-time singularity formation. This inequality ensures that the
attractive interaction dominates the kinetic dispersion and thereby
initiates the collapse mechanism identified in our analysis.

Since the singularity criterion requires
$$
0 \le 2\textup{KE}(0) < \mathcal{E}_E(0),
$$
our result applies only to the gravitationally attractive case
$$
E_f = -(\nabla K\star_x\rho_f),
\qquad
K(x)=-\frac{1}{4\pi|x|},
$$
and excludes the plasma case whose potential kernel is
$$
K(x)=\frac{1}{4\pi|x|}.
$$
\end{remark}

\begin{remark}
The concentration point $x=0$ appearing in Corollary~\ref{collapse} is
not intrinsic to the equation \eqref{VPiL}. Rather, it arises from the
specific choice of the test quantity
$$
\textup{I}(t)=\frac{1}{2}\iint_{\mathbb{R}^6}|x|^2 f(t,x,v)\,dx\,dv
$$
and the conditions imposed to guarantee that
$\textup{I}(t_{\max})\le 0$.

More generally, if one instead considers the shifted spatial moment
$$
\textup{I}_{x_0}(t)
\eqdef
\frac{1}{2}\iint_{\mathbb{R}^6}|x-x_0|^2 f(t,x,v)\,dx\,dv
$$
together with the corresponding conditions, then the same argument
yields concentration at the point $x=x_0$ (possibly at a different
maximal time).
\end{remark}


\subsection{Main Strategies}

\subsubsection{Construction of the Approximate Sequence}
Our solution construction is based on the method of continuity applied to a suitably designed approximation scheme~\eqref{semi}. To ensure uniform ellipticity and coefficient regularity necessary for global parabolic estimates, we introduce artificial diffusion and cutoff modifications in both the spatial and velocity variables, together with a time–mollified characteristic function. This enables us to employ Schauder- and $L^p$-type estimates (see Theorems~\ref{Schauder}, \ref{L^p}, and \ref{Weighted mixed L^p estimate}). The corresponding functional framework is the intersection space $Y_{T,\alpha,p_0,q_0,w}$ (defined in Section~\ref{reg space}), which simultaneously provides boundedness, Hölder regularity, and integrability required for the comparison arguments developed later. Detailed steps are given in Section~\ref{Construction of a sequence}.

To control the iterative sequence, we utilize the nonlinear estimate established in Section~\ref{boundedness}. In particular, the classical maximum principle for parabolic equations yields polynomial decay of the solution in both $x$ and $v$. Such decay is crucial to ensure that nonlinear quantities, including $E_f$ and $(-\Delta_v)^{-1}f$, are well-defined within the weak formulation. For example, if $G$ satisfies sufficient polynomial decay, then $E_G$ and $(-\Delta_v)^{-1}G$ remain well-defined and $G(t,\cdot,\cdot)\in L^1_{x,v}$ provided
$$
G \lesssim \langle x\rangle^{-3-\varepsilon}\langle v\rangle^{-3-\varepsilon}.
$$
A challenge arises because the standard weight $\langle x\rangle^{-m}\langle v\rangle^{-m}$ is not invariant under the transport operator $v\cdot\nabla_x$. To overcome this, we employ the transport-invariant weight $\langle x-vt\rangle^{-m}\langle v\rangle^{-m}$. Although this introduces additional terms when differentiating with respect to $v$, the resulting coefficients grow at most polynomially in $T$, which remains manageable within our analysis.

\subsubsection{To Study the Finite-Time Singularity Formation}
We focus only on the gravitational case, in which the attractive interaction may lead the particles to collapse onto a set of measure zero. For the singularity analysis, we employ a mixed-moment method inspired by
\cite{VP_Horst_1982, VMV_Bobylev_1997, VR_Choi_2024}. The central object is the
second spatial moment
$$
\textup{I}(t) \eqdef \frac12 \iint_{\mathbb{R}^6} |x|^2 f(t,x,v)\,dvdx,
$$
whose evolution is governed by the fundamental identity
$$
\textup{I}'(t) = \iint_{\mathbb{R}^6} (x\cdot v) f(t)\,dvdx.
$$
Using the equation and successive differentiations of $\textup{I}(t)$, we establish the
following exact formulas:
\begin{align*}
    \textup{I}''(t) &= 2\mathrm{KE}(t)- \mathcal{E}_E(t),\\
\textup{I}'''(t) &= \frac{d}{dt}(2\mathrm{KE}(t) - \mathcal{E}_E(t))
= 4\iint_{\mathbb{R}^6} (-\Delta_v)^{-1}f\, f\,dvdx,
\end{align*}
where $\mathrm{KE}(t)$ and $\mathcal{E}_E(t)$, defined in \eqref{tot kin energy} and \eqref{tot E energy}, denote the total kinetic energy and gravitational field energy, respectively. Since the right-hand side
is non-negative, we obtain $\textup{I}'''(t)\ge 0$, ensuring that $\textup{I}''(t)$ is
non-decreasing.

To extract a quantitative differential inequality, we estimate the Newtonian
potential term via weighted polynomial decay of $f$, propagated from the local
existence theory. In particular, the bootstrap assumptions imply
$$
\|(-\Delta_v)^{-1} f(t)\|_{L^\infty_{x,v}} \lesssim \|f(t)\|_{L^\infty_{x,v}},
$$
from which we derive
$$
\textup{I}'''(t) \le C \|f(t)\|_{L^\infty_{x,v}} \|f^\textup{in}\|_{L^1_{x,v}}
\lesssim_m \frac{\|f^\textup{in}\|_{L^1_{x,v}}}{\|f^{\mathrm{in}}\|_{\mathcal{C}^2_{m,0}}},
$$
where we have used the uniform bounds on the weighted $C^2$ norms of order $m$ obtained from the
nonlinear estimates in Section \ref{boundedness}.

Combining the above estimates, we show that $\textup{I}(t)$ satisfies
$$
\textup{I}(t) \le \textup{I}(0) + \textup{I}'(0)t + (2\mathrm{KE}(0) - \mathcal{E}_E(0))t^2
+ C \frac{\|f^\textup{in}\|_{L^1_{x,v}}}{\|f^{\mathrm{in}}\|_{\mathcal{C}^2_{m,0}}}t^3.
$$
Thus, $\textup{I}(t)$ is bounded above by a cubic polynomial whose coefficients depend
only on the initial data.

The hypotheses in Theorem~\ref{thm.main2} impose a negative initial mixed moment
$\textup{I}'(0) < 0$, large initial gravitational field energy relative to the kinetic energy $\mathcal{E}_E(0)>2\textup{KE}(0)$; more precisely, we assume the condition
$$
(\mathcal{E}_E(0) - 2\mathrm{KE}(0))\,\|f^{\mathrm{in}}\|_{\mathcal{C}^2_{m,0}}
\gg \|f^{\mathrm{in}}\|_{L^1_{x,v}},
$$
which ensures that the negative quadratic term dominates the small cubic term. As a consequence, the cubic polynomial attains a negative local minimum
at a finite time $t=t_\ast>0$. Since the non-negativity of $f$ enforces
$|x|^2 f \ge 0$ almost everywhere, the spatial moment $\textup{I}(t)$ cannot become
negative without violating positivity of the solution. Therefore, the solution
must lose at least one of the regularity hypotheses before $t_\ast$, proving that the maximal
lifespan is finite.

\subsection{Related Results in the Literature} \subsubsection{On the isotropic Landau equation. }Although the full system \eqref{VPiL} has not yet been addressed in the literature, the \textit{spatially homogeneous} isotropic Landau equation has been investigated in several works. Indeed, the generic non-divergence form of the isotropic Landau equation can be written as
\begin{align*}
    \partial_t f = (-\Delta_v)^{-\frac{\gamma+5}{2}} f\, \Delta_v f + \alpha f(-\Delta_v)^{-\frac{\gamma+3}{2}}f,
\end{align*} for the potential $|\cdot|^{\gamma+2}$ with $\gamma\in [-3,-2]$ and the reaction coefficient with $\alpha>0$ in 3 dimensions.
The global existence of the spatially homogeneous isotropic Landau equation was established by Krieger and Strain \cite{iL_KS_2012}, when $\alpha<\frac{2}{3}$, assuming radially symmetric and non-increasing initial data. In the same year, the same authors, together with Gressman, introduced a sharp nonlocal inequality \cite{iL_KS_2012_2} involving the non-constant diffusion coefficient $(-\Delta)^{-1}f$, which improved the global existence result up to the threshold, $\alpha<\frac{74}{75}$. In 2016, Gualdani and Guillen \cite{iL_Gualdani_2016} proved the existence of smooth solutions for $\alpha=1$, under the assumption of radial symmetry and monotonicity of the initial data. We note that the subsequent works have focused exclusively on the case with unit coefficient, $\alpha=1$. Later, in 2018, Gualdani and Zamponi \cite{iL_Gualdani_2018} established the existence of weak even solutions for even initial data in $L^1$. Regarding the \textit{very soft potential} range of $\gamma\in(-\gamma_*,2] $ with $\gamma_*\approx -2.458$ in 3 dimensions, Gualdani and Guillen in 2022 established $L^p$ propagation and an $L^\infty $ estimate.    Most recently, in 2024, Bowman and Sehyun \cite{iL_Bowman_2024} established the conditionally global existence of smooth solution for large $m\ge1$, without requiring symmetry or monotonicity assumptions.

A summary of these results on the \textbf{spatially homogeneous} equation, highlighting the assumptions on initial data, the range of the parameter $\alpha$ associated with the strength of the reaction term, and the corresponding regularity, is presented in Table \ref{tab:landau-summary}.
\begin{table}[ht!]
\resizebox{\linewidth}{!}{
\begin{tabular}{|c|c|c|c|c|c|}
\hline
\textbf{Authors}                                                                        & \begin{tabular}[c]{@{}c@{}}\textbf{Existence/}\\ \textbf{Uniqueness}\end{tabular} & \textbf{Regularity}                                                       & \textbf{Range of} $\alpha,\gamma$        & \textbf{Initial Data}                                                     & \textbf{Year} \\ \hline
\begin{tabular}[c]{@{}c@{}}\textbf{J. Krieger,}\\ \textbf{R. M. Strain}\cite{iL_KS_2012}\end{tabular}                & $\exists!$                                                    &\begin{tabular}[c]{@{}c@{}} $C([0,\infty); L^1\cap L^{2+}(\mathbb{R}^3))\cap C([0,\infty);H^2(\mathbb{R}^3))$,\\ $\langle v\rangle^{\frac{1}{2}} (-\Delta_v)f\in C([0,\infty); L^{2}(\mathbb{R}^3)) $\end{tabular}                            & $0<\alpha<\frac{2}{3},\ \gamma=-3$  &\begin{tabular}[c]{@{}c@{}}Radial,\\ Non-increasing, \\Non-negative,\\ $f^\textup{in}\in L^1\cap L^{2+}$,\\$\langle v\rangle^{\frac{1}{2}} (-\Delta_v)f^\textup{in}\in L^2$\end{tabular} & '12  \\ \hline
\begin{tabular}[c]{@{}c@{}}\textbf{P. Gressman,}\\ \textbf{J. Krieger,}\\ \textbf{R. M. Strain}\cite{iL_KS_2012_2}\end{tabular} & $\exists!$                                                    & \begin{tabular}[c]{@{}c@{}} $C([0,\infty); L^1\cap L^{2+}(\mathbb{R}^3))\cap C([0,\infty);H^2(\mathbb{R}^3))$,\\ $\langle v\rangle^{\frac{1}{2}} (-\Delta_v)f\in C([0,\infty); L^{2}(\mathbb{R}^3))$ \end{tabular}                                  & $0<\alpha<\frac{74}{75},\ \gamma=-3$ & \begin{tabular}[c]{@{}c@{}}Radial,\\ Non-increasing,\\Non-negative,\\ $f^\textup{in}\in L^1\cap L^{2+}$,\\$\langle v\rangle^{\frac{1}{2}} (-\Delta_v)f^\textup{in}\in L^2$ \end{tabular} & '12  \\ \hline
\begin{tabular}[c]{@{}c@{}}\textbf{M. Gualdani,}\\ \textbf{N. Guillen}\cite{iL_Gualdani_2016}\end{tabular}              & $\exists$                                                     & $C^\infty((0,\infty)\times\mathbb{R}^3)$                       & $\alpha=1,\ \gamma=-3$             & \begin{tabular}[c]{@{}c@{}}Radial,\\ Non-increasing,\\Non-negative,\\  $L^1_2\cap L\log L\cap L^p_{\text{weak}}$,\\ \text{ for some }$p>6$\end{tabular} & '16  \\ \hline
\begin{tabular}[c]{@{}c@{}}\textbf{M. Gualdani,}\\ \textbf{N. Zamponi}\cite{iL_Gualdani_2018}\end{tabular}               & $\exists$                               & \begin{tabular}[c]{@{}c@{}}$\sqrt{f}\in L^2((0,T); H^1(\mathbb{R}^3, \langle v\rangle^{-1}\,dv))$, \\ $f, f\log f\in L^\infty((0,T); L^1(\mathbb{R}^3)),$ \\$f$ even in $v$,\$conditional) $L^\infty([\tau,T]; L^\infty(B_R)),\tau,R>0$\end{tabular}& $\alpha=1,\ \gamma=-3$            & \begin{tabular}[c]{@{}c@{}}Even,\\ Non-negative,\\ $L^1_2\cap L\log L$\end{tabular}            & '18 \\\hline
\begin{tabular}[c]{@{}c@{}}\textbf{M. Gualdani,}\\ \textbf{N. Guillen }\cite{iL_Gualdani_2021}\end{tabular}   
 &                                    & $L^\infty([\tau,T]; L^\infty(B_R))$, $\tau,R>0$ & \begin{tabular}[c]{@{}c@{}}$\alpha=1,\ \gamma\in (\gamma_*,-2]$ \\\text{with } $\gamma_*\approx -2.458$ \end{tabular}            & \begin{tabular}[c]{@{}c@{}}Non-negative\\$L^1_2\cap L^p$,\\\text{with }$p>\frac{3}{\gamma+5}$ \end{tabular}            & '22
  \\ \hline
\begin{tabular}[c]{@{}c@{}}\textbf{D. Bowman,}\\\textbf{S. Ji}\cite{iL_Bowman_2024}\end{tabular}                     & $\exists$                                                     & $C^\infty((0,\infty)\times\mathbb{R}^3)$                          & $\alpha=1,\ \gamma=-3$    & \begin{tabular}[c]{@{}c@{}}Non-negative,\\ $L^\infty\cap L^1_m$,\\ Finite Fisher \\information\end{tabular}                                            & '24  \\ \hline
\end{tabular}
}
\vspace{2mm}
\caption{Summary of existence and regularity results for \textbf{spatially homogeneous} isotropic Landau equations}
\label{tab:landau-summary}
\end{table}

\subsubsection{On the Weighted Elliptic Regularity Theory. }As we can see in Lemma \ref{Holder cont of E_g}, the fact that the sequence element $\{f^n\}$ in the iterative scheme belongs to a weighted $L^p$ space is crucial for establishing the regularity (locally H\"older) of the electric field $E_f$. This property is taken directly from \cite{Ell_Dong_2018}, where the authors proved the global a priori estimate for elliptic and parabolic equations in weighted mixed-norm Sobolev spaces.
Beyond the specific nonlinear framework considered here, substantial progress has also been made on linear degenerate elliptic equations in the setting of weighted regularity theory. Although not directly utilized in our iteration, such results characterize regularizing mechanisms inherent in kinetic models. In particular, the introduction of artificial diffusion in our linearized approximation is consistent with these analytical strategies, as it facilitates the construction of approximate solutions in weighted Sobolev spaces. We refer to \cite{Ell_Fabes_1982, Ell_Trudinger_1971, Ell_Balci_2022, Ell_Bella_2021} for foundational developments on Hölder continuity, notably via De Giorgi–type methods, for degenerate elliptic equations.

\subsubsection{On Singularity Formation in Various Kinetic Equations}
In parallel with advances on local and global existence, several kinetic models are known to develop singularities in finite time under appropriate physical conditions. For the Vlasov–Poisson system, Horst \cite{VP_Horst_1982} established in 1982 that any classical solution in the attractive (gravitational) case with spatial dimension 
$d\ge 4$ must blow up in finite time whenever the initial total energy is negative. For the Vlasov–Manev equation, a kinetic model arising in stellar dynamics, Bobylev et al.~\cite{VMV_Bobylev_1997} proved in 1997 the nonexistence of global classical solutions by employing a moment-based argument. More recently, Choi and Jeong \cite{VR_Choi_2024} showed that solutions to the Vlasov–Riesz equation have finite lifespan when the initial kinetic energy is sufficiently small relative to the initial interaction energy. They further demonstrated finite-time singularity formation for the Vlasov–Riesz–Fokker–Planck equation, provided the initial data are such that the second mixed moment is sufficiently negative in a quantitative sense, as measured in terms of $\textup{I}(0)$ and the initial total energy.

\subsection{Notations}\label{notations}
We now introduce the key notions and notations that will be used throughout the paper.

\begin{itemize}
\item Domain: For each $T>0$, we define the phase space 
\begin{align*}
\mathcal{Q}_T \eqdef (0,T)\times\mathbb{R}^6,
\end{align*}which serves as the domain of the solution to equation \eqref{VPiL}. We also introduce the extended phase space 
\begin{align*}
\Omega_T \eqdef  (-\infty, T)\times\mathbb{R}^6,
\end{align*}which will be used as the domain for the approximate problem described in \eqref{semi}.
\item Parabolic H\"older spaces in $\Omega_T$: Let $0<\alpha<1$. We define the parabolic H\"older seminorm of a function $f$ on $\Omega_T$ by
\begin{align*}
[f]_{C_\textup{para}^\alpha(\Omega_T)}  \eqdef \sup_{\substack{(t,x,v)\neq (s,y,u) \\ (t,x,v), (s,y,u)\in \Omega_T}} \frac{|f(t,x,v) - f(s,y,u)|}{(|t-s|^{1/2} + |x-y| + |v-u|)^\alpha}.
\end{align*}For second-order parabolic regularity, we define
\begin{align*}
[f]_{C_\textup{para}^{2,\alpha}(\Omega_T)}  \eqdef [\partial_tf]_{C_\textup{para}^\alpha(\Omega_T)} + [\nabla_{x,v}^2f]_{C_\textup{para}^\alpha(\Omega_T)},
\end{align*}where $\nabla_{x,v}^2f$ denotes the collection of all second-order derivatives with respect to $x,v$. We say that $f\in C^{2,\alpha}_\textup{para}(\Omega_T)$ if $\lVert f\rVert_{C_\textup{para}^{2,\alpha}(\Omega_T)} <\infty$, 
where the full norm is defined by
\begin{align*}
\lVert f\rVert_{C_\textup{para}^{2,\alpha}(\Omega_T)} &\eqdef \lVert f\rVert_{L^\infty(\Omega_T)} + \lVert \nabla_{x,v} f\rVert_{L^\infty(\Omega_T)} \\
&+\lVert \nabla_{x,v}^2f\rVert_{L^\infty(\Omega_T)} + \lVert \partial_tf \rVert_{L^\infty(\Omega_T)} + [f]_{C_\textup{para}^{2,\alpha}(\Omega_T)}.
\end{align*}
Here, $\nabla_{x,v}$ denotes the gradient with respect to the spatial and velocity variables, and $\nabla_{x,v}^2$ denotes the corresponding collection of second-order derivatives.
\item Muckenhoupt weight class $A_p(\mu, D)$: Let $p\in (1,\infty)$, let $\mu$ be a $\sigma$-finite measure on $\mathbb{R}^n$ and let $D$ be any nonempty open subset of the Euclidean space. A non-negative locally integrable function $w : D \to[0,\infty)$ belongs to the Muckenhoupt $A_p(\mu, D)$ class if
\begin{align*}
\qquad\quad\ [w]_{A_p}  \eqdef \sup_{B_r(x_0)\subset D} \left(\Xint-_{B_r(x_0)} w(x)\,d\mu \right) \left( \Xint-_{B_r(x_0)}(w(x))^{-1/(p-1)}\,d\mu\right)^{p-1}<\infty.
\end{align*}Throughout this paper, if $\mu$ is the Lebesgue measure in $\mathbb{R}^d$, we abbreviate $A_p(\mu, D)$ by $A_p(D)$ for any $1<p<\infty$ and $D\subset \mathbb{R}^d$.
\item Weighted mixed parabolic Sobolev spaces in $\Omega_T$: Let $1<p\le q<\infty$, and consider a product-type weight $w : \Omega_T \to \mathbb{R}_+$ given by 
\begin{align*}
w(t,x,v) = w_1(t)w_2(x,v),
\end{align*}where $w_1\in A_q((-\infty,T))$ and $w_2\in A_p(\mathbb{R}^6)$. Then the associated mixed norm $\|\cdot\|_{L_{p,q,w}(\Omega_T)}$ is defined by
\begin{align*}
\qquad\quad\ \lVert f\rVert_{L_{p,q,w}(\Omega_T)} \eqdef \left( \int_{-\infty}^T w_1(t)\left( \iint_{\mathbb{R}^6} |f(t,x,v)|^p w_2(x,v)\,dxdv\right)^{\frac{q}{p}}\,dt\right)^{\frac{1}{q}}.
\end{align*}
Then, we define the weighted mixed parabolic Sobolev space $W_{p,q,w}^{1,2}(\Omega_T)$ to consist of functions $f$ such that
\begin{align*}
\partial_tf,\,\, \nabla_{x,v}f, \,\, \nabla_{x,v}^2f \in L^1_\textup{loc}(\Omega_T), \quad \textup{ and }\quad \| f\|_{W_{p,q,w}^{1,2}(\Omega_T)}<\infty,
\end{align*}where $\nabla_{x,v}$ and $\nabla_{x,v}^2$ denote the gradient and Hessian, respectively, with respect to $(x,v)$ and the norm in $W_{p,q,w}^{1,2}(\Omega_T)$ is given by
\begin{align}\label{def.para.sob}
\qquad\qquad\lVert f\rVert_{W_{p,q,w}^{1,2}(\Omega_T)} &\eqdef \lVert f\rVert_{L_{p,q,w}(\Omega_T)} + \lVert \partial_tf\rVert_{L_{p,q,w}(\Omega_T)} + \lVert \nabla_{x,v}f\rVert_{L_{p,q,w}(\Omega_T)} + \lVert \nabla_{x,v}^2f\rVert_{L_{p,q,w}(\Omega_T)}.
\end{align}Finally, we define the non-weighted Sobolev space by
\begin{align*}
W_{p,q}^{1,2}(\Omega_T) \eqdef W_{p,q,1}^{1,2}(\Omega_T),\ \ W_p^{1,2}(\Omega_T) \eqdef W_{p,p,1}^{1,2}(\Omega_T),
\end{align*}where the weight is identically one, and the mixed norm reduces to the standard $L^p$-based parabolic Sobolev norm.
\item $L^\infty_{m,2m}$ and $\mathcal{C}^2_{m,0}$ spaces: For $m\in\mathbb{R}$, we define the weighted $L^\infty$ space by
\begin{align*}
    L^\infty_{m,2m}(\mathbb{R}^6) \eqdef \biggl\{f : \mathbb{R}^6\to\mathbb{R} \bigg|\ \sup_{(x,v)\in\mathbb{R}^6}\langle x\rangle^m \langle v\rangle^{2m}|f(x,v)|<\infty\biggr\},
\end{align*}where $\langle z\rangle \eqdef (1+|z|^2)^{1/2}$ for $z\in\mathbb{R}^3$. The associated norm is given by
\begin{align*}
    \lVert f\rVert_{L^\infty_{m,2m}(\mathbb{R}^6)} \eqdef \sup_{(x,v)\in\mathbb{R}^6} \langle x\rangle^m\langle v\rangle^{2m} |f(x,v)|.
\end{align*}In accordance with this space, we define the initial space, denoted by $\mathcal{C}^2_{m,0}(\mathbb{R}^6)$ as the set of functions $f\in C^2(\mathbb{R}^6)$ such that
\begin{align*}
    f,v\cdot\nabla_xf, \nabla_vf, \Delta_xf, \Delta_vf\in L^\infty_{m,2m}(\mathbb{R}^6),
\end{align*}and
\begin{align*}
    \langle x\rangle^m\langle v\rangle^{2m}\big(|f|+|v\cdot\nabla_xf| + |\nabla_vf| + |\Delta_xf| + |\Delta_vf|\big) \to0 \quad\text{as }|x|,|v|\to\infty,
\end{align*}endowed with a norm
\begin{align*}
    \|f\|_{\mathcal{C}^2_{m,0}(\mathbb{R}^6)} \eqdef \|(|f| +|v\cdot\nabla_xf|+ |\nabla_vf| + |\Delta_xf| + |\Delta_vf|)\|_{L^\infty_{m,2m}(\mathbb{R}^6)}. 
\end{align*}
\item Solution space $X_T^m$: For $m\in\mathbb{R}$, $1<p_0<\frac{3}{2}$, $3<q_0<\frac{11}{3}$ and $w=\langle x\rangle\langle v\rangle^8$, we define the solution space 
    \begin{align*}
            X_T^m  \eqdef \left\{f \in W_{p_0}^{1,2}(\mathcal{Q}_T)\cup W_{q_0,w}^{1,2}(\mathcal{Q}_T) \ :\  \sup_{(t,x,v)\in \overline{\mathcal{Q}_T}} \langle x-vt\rangle^m\langle v\rangle^m|f(t,x,v)|<\infty\right\},
    \end{align*}endowed with a norm
    \begin{align*}
            \lVert f\rVert_{X_T^m} \eqdef \lVert \langle x-vt\rangle^m\langle v\rangle^m f(t,x,v)\rVert_{L^\infty(\overline{\mathcal{Q}_T})},
    \end{align*}where $W_{p_0}^{1,2}$ and $W_{q_0,w}^{1,2}$ are the (weighted) parabolic Sobolev spaces defined as in \eqref{def.para.sob}.
\item \label{reg space}Regular space $Y_{T,\alpha,p_0,q_0,w}$: For $0<\alpha< 1$, $1<p_0<\frac{3}{2}$ and $3<q_0<\frac{11}{3} $, we define the regular space
\begin{align*}
Y_{T,\alpha,p_0,q_0,w} \eqdef (C_\textup{para}^{2,\alpha} \cap W_{p_0}^{1,2} \cap W^{1,2}_{q_0, q_0, w})(\mathcal{Q}_T),
\end{align*}where the weight function is given by
\begin{align*}
w(x,v) \eqdef \langle x\rangle \langle v\rangle^8.
\end{align*}
\item Image space $Y'_{T,\alpha,p_0,q_0,w}$: For $\alpha$, $p_0,q_0$ and $w$ as above, we define
\begin{align*}
Y'_{T,\alpha,p_0,q_0,w} \eqdef (C_\textup{para}^\alpha \cap L_{p_0} \cap L_{q_0, q_0, w})(\mathcal{Q}_T).
\end{align*}
\item Moment of inertia of the system $\textup{I}(t)$ is defined by
\begin{align*}
\textup{I}(t) \eqdef \frac{1}{2}\iint_{\mathbb{R}^3\times\mathbb{R}^3} |x|^2f(t,x,v)\,dxdv.
\end{align*}
\item Kinetic energy of the system $\textup{KE}(t)$ is defined by
\begin{align}\label{kinetic energy}
\textup{KE}(t) \eqdef \frac{1}{2}\iint_{\mathbb{R}^3\times\mathbb{R}^3} |v|^2f(t,x,v)\,dxdv.
\end{align}
\item Absolute value of the gravitational field energy $\mathcal{E}_E(t)$ is defined by
\begin{align*}
    \mathcal{E}_E(t) \eqdef \frac{1}{2}\int_{\mathbb{R}^3}|E_f(t,x)|^2\,dx,
\end{align*}where $E_f = -(\nabla K\star_x\rho_f)$, $K(x)=-\frac{1}{4\pi|x|}$.
\item The Japanese brackets are defined as $\langle z\rangle = (1+|z|^2)^{1/2}$.
\item For $1\le p\le \infty$, we denote by $L^p_k(\mathbb{R}^d)$ the weighted $L^p$-space with polynomial weight of order $k\in\mathbb{R}$. It consists of all measurable functions $f : \mathbb{R}^d\to\overline{\mathbb{R}}$ such that
\begin{align*}
    \|f\|^p_{L^p_k(\mathbb{R}^d)} \eqdef \int_{\mathbb{R}^d} \langle x\rangle^k |f(x)|^p\,dx < \infty,
\end{align*}with the usual modification when $p=\infty$. The corresponding homogeneous weighted space is denoted by $\dot{L}^p_k(\mathbb{R}^d)$. It is defined as the set of measurable functions $f : \mathbb{R}^d\to\overline{\mathbb{R}}$ satisfying
\begin{align*}
    \|f\|^p_{\dot{L}^p_k(\mathbb{R}^d)} \eqdef \int_{\mathbb{R}^d} |x|^k|f(x)|^p\,dx <\infty,
\end{align*}again with the usual modification when $p=\infty$.
\item For any open set $\Omega$, $C_c^k(\Omega)$ is denoted by the space of the real-valued $C^k$ functions which are compactly supported in $\Omega$. 
\end{itemize}

\begin{remark}In the definition of the regular space $Y_{T,\alpha,p_0,q_0,w}$, we note that the weight function $w(x,v) = \langle x\rangle\langle v\rangle^8\in A_{q_0}(dxdv, \mathbb{R}^6)$ since both $\langle x\rangle \in A_{q_0}(dx, \mathbb{R}^3)$ and $\langle v\rangle^8 \in A_{q_0}(dv, \mathbb{R}^3)$ hold for $3<q_0<\frac{11}{3}$. The interpolation theorem for Muckenhoupt weights can be found in Chapter 7 of \cite{FA_Grafakos_2014}. The fact that the Japanese bracket function $\langle \cdot \rangle^l$ lies in certain $A_p$ classes is discussed in detail in \cite{FA_Duarte_2023}.
\end{remark}

\subsection{Outline of the Paper} The remainder of this paper is organized as follows. In Section \ref{Construction of a sequence}, after establishing the solvability of the heat equation in the intersection of Hölder and $L^p$ spaces, we construct a sequence of approximate solutions $\{f^n\}_{n\ge 0}$ to the linearized problem \eqref{n approx}, based on the parabolic theory developed in Theorems \ref{Schauder}, \ref{L^p},  and \ref{Weighted mixed L^p estimate}. Section \ref{boundedness} proves that this approximating sequence satisfies uniform polynomial decay in both the $x-tv$ and $v$ directions. Passing to the limit, we show that the limiting function belongs to the solution space $X_T^m$, is non-negative, and satisfies the weak formulation stated in Definition \ref{Weak formulation}. Finally, Section \ref{Singularity formation} establishes finite-time singularity formation for solutions in $X_T^m$ to the initial-value problem for \eqref{VPiL} in the gravitational (attractive) case, where $E_f=-(\nabla K\star\rho)$ with $K(x)=-\frac{1}{4\pi|x|}$, The proof relies on showing that the total moment of inertia of the system $\frac{1}{2}\iint_{\mathbb{R}^6} |x|^2 f\, dv dx$ becomes strictly negative in finite time.

\section{Solution to a Linear Problem}
\label{Construction of a sequence}
In this section, we construct a family of linearly approximate solutions to the initial-value problem for \eqref{VPiL}. For each small $\varepsilon>0$ and, given $g\in Y_{T,\alpha,p_0,q_0,w}$ with $g\ge0$, in $\Omega_T$, we define a related linear operator $\mathcal{L}_{\varepsilon,g}: Y_{T,\alpha,p_0,q_0,w}\rightarrow Y'_{T,\alpha,p_0,q_0,w}$ given by
\begin{align}\label{def.Leg}
\mathcal{L}_{\varepsilon,g}(f) \eqdef  \partial_t f - \varepsilon \Delta_{x,v}f + \chi(\varepsilon v)v\cdot\nabla_x f + J^{\varepsilon}(t)\chi(\varepsilon x) E_g\cdot\nabla_v f - (-\Delta_v)_\varepsilon^{-1}g \Delta_vf - gf,
\end{align}where we define the localized diffusion coefficient
\begin{align*}
    (-\Delta_v)^{-1}_\varepsilon g(v) \eqdef \frac{1}{4\pi}\int_{\mathbb{R}^3} \frac{\chi(\varepsilon u)g(u)}{|u-v|}\,du,
\end{align*}where the function $\chi\in C_c^2(\mathbb{R}^3)$ is a smooth cut-off function that satisfies
\begin{align}\label{def cutoff}
0\le \chi\le 1,\quad \chi\equiv1\,\,\textup{ in }\,\,B_{\frac{1}{2}}(0), \quad \textup{ and }\quad \textup{supp }\chi=B_1(0),
\end{align} and $J^\varepsilon$ is a time-regularized characteristic function defined via the following mollification:
\begin{align}\label{reg interval}
J^\varepsilon(t) &\eqdef \left(1_{\left[-\frac{\varepsilon}{2},T-\frac{\varepsilon}{2}\right]}\star \rho_\varepsilon\right)(t),
\end{align}with a mollifier $\rho_\varepsilon$ supported on $\left[-\frac{\varepsilon}{2}, \frac{\varepsilon}{2}\right]$.

Using this operator, we define the following linear approximate equation for \eqref{VPiL} for each given non-negative $g\in Y_{T,\alpha,p_0,q_0,w}$ as
\begin{align}\label{semi}
\mathcal{L}_{\varepsilon,g}(f) = \mathcal{L}_{\varepsilon,g}(f^\textup{in}_\varepsilon)I^\varepsilon,
\end{align}where
\begin{equation*}
    \mathcal{L}_{\varepsilon,g}(f^\textup{in}_\varepsilon)\eqdef 
     (-\varepsilon\Delta_{x,v}f^\textup{in}_\varepsilon +\chi(\varepsilon v)v\cdot\nabla_xf^\textup{in}_\varepsilon + J^\varepsilon(t)\chi(\varepsilon x)E_g \cdot\nabla_vf^\textup{in}_\varepsilon - (-\Delta_v)_\varepsilon^{-1}g\Delta_vf^\textup{in}_\varepsilon - gf^\textup{in}_\varepsilon),
\end{equation*}
\begin{align}\label{approx initial}
    \{f^\textup{in}_\varepsilon\ge0\}_{\varepsilon>0}\subset C^\infty_c(\mathbb{R}^6)\textup{ s.t. } f^\textup{in}_\varepsilon \to f^\textup{in} \textup{ in }\mathcal{C}^2_{m,0}(\mathbb{R}^6),
\end{align}where the approximates are just obtained from multiplying $f^\textup{in}$ by a $6$-dimensional smooth cut-off. Here, another time-regularized characteristic function near $t=0$, $I^\varepsilon$ is defined by
\begin{align}\label{I def}
    I^\varepsilon(t) \eqdef (1_{\left[-\frac{\varepsilon^\kappa}{2}, \frac{3\varepsilon^\kappa}{2}\right]}\star \rho_{\varepsilon^\kappa})(t),
\end{align}where $0<\kappa \eqdef \min\{\kappa_1, \kappa_2\}$ with
\begin{align*}
    \kappa_1/p_0 \eqdef &\textup{ the maximum of the polynomial growth order in }\lambda^{-1} \textup{ and } (\Lambda/\lambda) \\
    &\textup{ arising from the global estimate } W_{p_0}^{1,2}(\Omega_T)\to L_{p_0}(\Omega_T),\\
    \kappa_2/q_0 \eqdef &\textup{ the maximum of the polynomial growth order in }\lambda^{-1} \textup{ and } (\Lambda/\lambda) \\
    &\textup{ arising from the global estimate } W_{q_0,w}^{1,2}(\Omega_T)\to L_{q_0,w}(\Omega_T).
\end{align*}

Based on linear analysis, we will introduce a sequence of iterated solutions $\{f^n\}_{n\ge 0}$ at the end of the section. Let $\varepsilon>0$ and $T>0$ be given. Let a non-negative function $g\in Y_{T,\alpha,p_0,q_0,w}$, satisfying the following pointwise bound, be given:
\begin{align}\label{bound g}
0\le g\le C_g \langle x-\chi(\varepsilon'v)vt\rangle^{-m} \langle v\rangle^{-m},
\end{align}for some constants $0<C_g$, $0<\varepsilon\le \varepsilon'\le 1$ and exponent $m>3$. This decay estimate will be further established in Section \ref{boundedness}. We first establish boundedness and continuity of the coefficients  in \eqref{semi} for each given $g$.

The following lemma establishes $L^\infty$-boundedness of the coefficients.
\begin{lemma}[$L^\infty$ boundedness of coefficients]
    \label{L^infty bound of coefficients}$\chi(\varepsilon v)v$, $E_g$ and $(-\Delta_v)_\varepsilon^{-1}g$ are essentially bounded with their respective domains.
\end{lemma}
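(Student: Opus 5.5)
The three coefficients are handled separately, and the only input is the pointwise bound \eqref{bound g} on $g$, namely $0\le g\le C_g\langle x-\chi(\varepsilon' v)vt\rangle^{-m}\langle v\rangle^{-m}$ with $m>3$. For $\chi(\varepsilon v)v$ nothing is needed: by \eqref{def cutoff}, $\chi(\varepsilon v)=0$ when $|v|\ge 1/\varepsilon$ and $0\le\chi\le1$. Hence $|\chi(\varepsilon v)v|\le \varepsilon^{-1}$ on all of $\mathbb{R}^3$, which is bounded, though not uniformly in $\varepsilon$.

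For $(-\Delta_v)^{-1}_\varepsilon g$, I bound $g(t,x,u)\le C_g\langle u\rangle^{-m}$ by dropping the $\langle x-\chi(\varepsilon'u)ut\rangle^{-m}\le1$ factor, and also drop $\chi(\varepsilon u)\le 1$. It then suffices to show
\[
\sup_{v\in\mathbb{R}^3}\int_{\mathbb{R}^3}\frac{\langle u\rangle^{-m}}{|u-v|}\,du<\infty .
\]
I split the integral into the regions $|u-v|\le1$ and $|u-v|>1$. On the first region $\langle u\rangle^{-m}\le1$ and $\int_{|u-v|\le1}|u-v|^{-1}du=2\pi$. On the second region $|u-v|^{-1}\le1$, and $\int\langle u\rangle^{-m}du<\infty$ because $m>3$. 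This gives $\|(-\Delta_v)^{-1}_\varepsilon g\|_{L^\infty}\le C(m)C_g$, uniformly in $(t,x,v)$ and $\varepsilon$.

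For $E_g$, first integrate in $v$ to control $\rho_g$. Using $\langle v\rangle^{-m}$ with $m>3$ gives $\rho_g(t,x)\le C_g\int\langle v\rangle^{-m}dv\le C(m)C_g$, so $\rho_g\in L^\infty_x$. I also need $\rho_g\in L^1_x$. By Fubini and the change of variables $x\mapsto x-\chi(\varepsilon'v)vt$ for fixed $v$ (a translation),
\[
\|\rho_g(t)\|_{L^1_x}\le C_g\int\langle v\rangle^{-m}\int\langle y\rangle^{-m}\,dy\,dv\le C(m)C_g ,
\]
uniformly in $t$. Then
\[
|E_g(t,x)|\le \frac1{4\pi}\int\frac{\rho_g(t,y)}{|x-y|^2}\,dy ,
\]
and I split into the regions $|x-y|\le1$ and $|x-y|>1$. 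The near part is at most $\|\rho_g\|_{L^\infty}\int_{|z|\le1}|z|^{-2}dz=4\pi\|\rho_g\|_{L^\infty}$, and the far part is at most $\|\rho_g\|_{L^1}$. So $\|E_g\|_{L^\infty}\lesssim_m C_g$.

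The only slightly delicate step is the $L^1_x$ bound on $\rho_g$. The weight is centered at the moving point $\chi(\varepsilon'v)vt$, so the $x$-decay must be used before integrating in $v$, exactly through the translation above. The decay $\langle v\rangle^{-m}$ is what makes the $v$-integral converge, since no uniform $x$-decay is available; this is where $m>3$ is used in both variables. Everything else is the standard near/far splitting of a locally integrable kernel against an $L^1\cap L^\infty$ density.
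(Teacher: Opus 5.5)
Your proof is correct and is essentially the paper's argument. The cutoff bounds $\chi(\varepsilon v)v$. For $(-\Delta_v)^{-1}_\varepsilon g$ you drop the factor $\langle x-\chi(\varepsilon' u)ut\rangle^{-m}\le 1$ just as the paper does, and merely prove $\sup_v\int\langle u\rangle^{-m}|u-v|^{-1}\,du<\infty$ by a near/far split instead of citing \eqref{int decay}.

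For $E_g$ the two routes differ slightly. The paper keeps the $v$-integral outside and applies \eqref{int decay} to the $y$-integral for each fixed $v$, getting $\lesssim_m\langle x-\chi(\varepsilon'v)vt\rangle^{-2}\le 1$ despite the moving center. You integrate out $v$ first to get $\rho_g\in L^1_x\cap L^\infty_x$ (via the translation) and then split near/far, i.e.\ the argument of Lemma~\ref{Linfty.est.sing.int} at the endpoint $p=1$. Both give $\|E_g\|_{L^\infty}\lesssim_m C_g$ uniformly in $\varepsilon'$ and $T$. Yours is slightly more self-contained, since the paper quotes \eqref{int decay} without proof.
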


\begin{proof}
The function $\chi(\varepsilon v)v$ is bounded by the definition of $\chi$ as given in \eqref{def cutoff}.

By the upper bound \eqref{bound g} of $g$, the electric field term $E_g(t,x)$ satisfies the estimate
\begin{align*}
|E_g(t,x)| \le C_g\int_{\mathbb{R}^3} \frac{dv}{\langle v\rangle^m}\int_{\mathbb{R}^3} \frac{dy}{|y|^2\langle x-y-\chi(\varepsilon'v)vt\rangle^m} <\infty,
\end{align*}where we used 
\begin{equation}\label{int decay}
    \int_{\mathbb{R}^3}\frac{\langle z\rangle^{-m}}{|x-z|^k}dz \lesssim_{m,k} \frac{1}{\langle x\rangle^k}, \text{ for }m>3\text{ and }0<k<3.
\end{equation}
Finally, for the potential term $(-\Delta_v)_\varepsilon^{-1}g$, we use the upper bound \eqref{bound g} of $g$ and \eqref{int decay} again to obtain
\begin{align*}
|(-\Delta_v)_\varepsilon^{-1}g(t,x,v)| \le \frac{C_g}{4\pi} \int_{\mathbb{R}^3} \frac{du}{|u-v|\langle u\rangle^m}<\infty.
\end{align*}
\end{proof}

The following two lemmas prove the H\"older continuity of the coefficients in \eqref{semi}.

\begin{lemma}[H\"older continuity of $\chi(\varepsilon v)v$ and $(-\Delta_v)_\varepsilon^{-1}g$] \label{lem.Hold.cont}
Let $1\le p_0<\infty$ and let $0<\alpha<1$. Suppose that $g\in (W_{p_0}^{1,2}\cap C_\textup{para}^{2,\alpha})(\mathcal{Q}_T)$ and satisfies \eqref{bound g}. Then the coefficients $\chi(\varepsilon v)v$ and $(-\Delta_v)_\varepsilon^{-1}g$ are H\"older continuous in $(x,v)$ with exponent $\alpha$ and in $t$ with exponent $\alpha/2$. 
In particular, both functions are parabolically H\"older continuous.
\end{lemma}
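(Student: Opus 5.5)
The coefficient $\chi(\varepsilon v)v$ is handled directly. It is independent of $(t,x)$, and it is a $C^2$ function of $v$ with compact support in $B_{1/\varepsilon}(0)$, hence globally Lipschitz with constant $\lesssim 1+\varepsilon^{-1}\|\nabla\chi\|_\infty\cdot\varepsilon^{-1}\cdot\varepsilon$. Since it is also bounded, the elementary interpolation
\begin{align*}
|h(z)-h(z')|\le \min\{2\|h\|_\infty,\ \textup{Lip}(h)|z-z'|\}\le (2\|h\|_\infty)^{1-\alpha}\textup{Lip}(h)^\alpha|z-z'|^\alpha
\end{align*}
gives $\alpha$-H\"older continuity in $v$. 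Continuity in $t$ and $x$ is trivial.

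For $\Phi\eqdef(-\Delta_v)^{-1}_\varepsilon g$, I would treat the $v$-, $x$- and $t$-increments separately. The $v$-increment uses the kernel difference estimate
\begin{align*}
\left|\frac{1}{|u-v|}-\frac{1}{|u-v'|}\right|\le \frac{|v-v'|^\alpha}{\min\{|u-v|,|u-v'|\}^{1+\alpha}},
\end{align*}
again by interpolation between the trivial bound and the mean value bound. Since $1+\alpha<3$, inserting this bound together with the decay \eqref{bound g} (or simply using that $\chi(\varepsilon u)g$ is bounded and supported in $|u|\le 1/\varepsilon$) gives a finite integral in $u$ uniformly in $v,v'$. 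This yields $|\Phi(t,x,v)-\Phi(t,x,v')|\lesssim C_g|v-v'|^\alpha$, in the spirit of \eqref{int decay} with $k=1+\alpha$.

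The $x$- and $t$-increments go the other way: the kernel stays fixed and the regularity of $g$ is used. Since $g\in C^{2,\alpha}_{\textup{para}}$, both $\nabla_x g$ and $\partial_t g$ are bounded. Therefore
\begin{align*}
|\Phi(t,x,v)-\Phi(s,y,v)|\le \frac{1}{4\pi}\int_{|u|\le 1/\varepsilon}\frac{|g(t,x,u)-g(s,y,u)|}{|u-v|}\,du\lesssim_\varepsilon \big(\|\partial_tg\|_\infty|t-s|+\|\nabla_xg\|_\infty|x-y|\big),
\end{align*}
using $\int_{|u|\le1/\varepsilon}|u-v|^{-1}du\lesssim \varepsilon^{-2}$. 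Combining this with $|\Phi|\le 2\sup|\Phi|<\infty$ from Lemma \ref{L^infty bound of coefficients}, the same interpolation gives exponent $\alpha$ in $x$ and exponent $\alpha/2$ in $t$ (indeed any exponent up to $1$ works, and $|t-s|^{\alpha/2}$ dominates on bounded increments). Summing the three increments yields the parabolic H\"older seminorm bound $[\Phi]_{C^\alpha_{\textup{para}}}<\infty$.

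The main obstacle is the $v$-increment near the singularity of the Newtonian kernel. Here one must check that the domain of integration can be split into $|u-v|<2|v-v'|$ and its complement, or equivalently that the interpolated kernel bound remains integrable. Doing so yields a bound uniform in $(t,x)$ that depends only on $C_g$ and $\varepsilon$, which is exactly what the later Schauder theory needs. The hypothesis $g\in W^{1,2}_{p_0}$ is not needed beyond guaranteeing measurability and the pointwise derivatives already supplied by $C^{2,\alpha}_{\textup{para}}$.
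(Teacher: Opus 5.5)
Your proposal is correct, but for $(-\Delta_v)^{-1}_\varepsilon g$ it takes a different route from the paper.

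The treatment of $\chi(\varepsilon v)v$ is the same as the paper's. Your Lipschitz constant is merely loose: on the support of $\chi(\varepsilon\,\cdot)$ one has $|\varepsilon(\nabla\chi)(\varepsilon v)\otimes v|\le\|\nabla\chi\|_{L^\infty}$.

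For the potential, the paper uses one mechanism in all three directions. Each increment of the potential is written as a cut-off Newtonian integral of the corresponding increment of $g$; in the $v$-direction this is done after the substitution $u\mapsto u+v$. The increment is then bounded by $\varepsilon^{-3}$ times its supremum and controlled by $[g]_{C^\alpha_{\textup{para}}}$. Finiteness of that seminorm is justified by the interpolation inequality $[g]_{C^\alpha_{\textup{para}}}\le\delta[g]_{C^{2,\alpha}_{\textup{para}}}+C\delta^{-\alpha/2}\|g\|_{L^\infty}$.

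You do two things differently:
\begin{itemize}
\item You move the $v$-increment onto the kernel.
\item You treat the $x$- and $t$-increments via the Lipschitz bounds from $\|\nabla_xg\|_{L^\infty}$ and $\|\partial_tg\|_{L^\infty}$, both part of the $C^{2,\alpha}_{\textup{para}}$ norm, interpolated against the sup bound of the potential.
\end{itemize}

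The paper's argument is more uniform and gives constants linear in $[g]_{C^\alpha_{\textup{para}}}$. Yours is more elementary and, in the $v$-direction, more robust:
\begin{itemize}
\item It needs only the pointwise bound \eqref{bound g} and no regularity of $g$.
\item It avoids the shifted cut-off $\chi(\varepsilon(u+v))$ that the paper's substitution generates and absorbs without comment. This is harmless, since it only adds a Lipschitz term.
\item Its $\alpha=1$ endpoint, together with local integrability of $|u-v|^{-2}$, even shows the potential is Lipschitz in $v$.
\end{itemize}

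Both arguments give $\varepsilon$-dependent constants, which is all the later Schauder step requires. Neither uses $g\in W^{1,2}_{p_0}$ in an essential way.
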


\begin{proof}
Since $\chi(\varepsilon v)v$ is compactly supported in $v$, it suffices to show that it has a bounded derivative. Indeed, we observe that
\begin{align*}
\nabla_v(\chi(\varepsilon v)v) = \varepsilon(\nabla_v\chi)(\varepsilon v)v + \chi(\varepsilon v) \cdot \textup{Id}.
\end{align*}This yields the uniform bound:
\begin{align*}
\lVert \nabla_v(\chi(\varepsilon v)v)\rVert_{L^\infty_v} \le \varepsilon \lVert \nabla\chi\rVert_{L^\infty} \lVert v\rVert_{L^\infty(\textup{supp}(\chi(\varepsilon v)))} + 3\lVert \chi\rVert_{L^\infty} <\infty.
\end{align*}Thus, $\chi(\varepsilon v)v$ is Lipschitz (and therefore H\"older) continuous in $v$, and trivially in $x$ and $t$ since it depends only on $v$.

We now turn to the H\"older continuity of $(-\Delta_v)_\varepsilon^{-1}g$. First, fix $(x,v)\in\mathbb{R}^3\times\mathbb{R}^3$ and consider $t_1,t_2\in [0, T]$. Since $g\in (W_{p_0}^{1,2}\cap C_\textup{para}^{2,\alpha})(\mathcal{Q}_T)$, using Lemma \ref{Linfty.est.sing.int} and the representation formula for the Newtonian potential, we have
\begin{align*}
|(-\Delta_v)_\varepsilon^{-1}g(t_1,x,v) - (-\Delta_v)_\varepsilon^{-1}g(t_2,x,v)| &\lesssim \varepsilon^{-3}\lVert g(t_1,x,\cdot) - g(t_2,x,\cdot)\rVert_{L^\infty_v}\\
&\lesssim \varepsilon^{-3}[g]_{C_\textup{para}^{\alpha}}|t_1 - t_2|^{\alpha/2}.
\end{align*}Similarly, for fixed $(t,v)\in[0,T]\times\mathbb{R}^3$ and $x_1, x_2\in \mathbb{R}^3$, we have
\begin{align*}
|(-\Delta_v)_\varepsilon^{-1}g(t,x_1,v) - (-\Delta_v)_\varepsilon^{-1}g(t,x_2,v)| & \lesssim \varepsilon^{-3}\lVert g(t,x_1,\cdot) - g(t,x_2,\cdot)\rVert_{L^\infty_v}\\
&\lesssim  \varepsilon^{-3}[g]_{C_\textup{para}^{\alpha}} |x_1 - x_2|^\alpha.
\end{align*}Finally, for fixed $(t,x)\in[0,T]\times\mathbb{R}^3$ and $v_1, v_2\in\mathbb{R}^3$, we estimate
\begin{align*}
|(-\Delta_v)_\varepsilon^{-1}g(t,x,v_1) - (-\Delta_v)_\varepsilon^{-1}g(t,x,v_2)| &\lesssim \varepsilon^{-3}\lVert g(t,x,\cdot+v_1) - g(t,x,\cdot+v_2)\rVert_{L^\infty_v}\\
&\lesssim \varepsilon^{-3}[g]_{C_\textup{para}^{\alpha}}|v_1 - v_2|^\alpha.
\end{align*}Then, recall that for any $\delta>0$, the parabolic H\"older seminorm satisfies the interpolation-type estimate
\begin{align*}
[g]_{C_\textup{para}^\alpha(\mathcal{Q}_T)} \le \delta [g]_{C_\textup{para}^{2,\alpha}(\mathcal{Q}_T)} + C\delta^{-\alpha/2}\lVert g\rVert_{L^\infty(\mathcal{Q}_T)}<\infty,
\end{align*}where the constant $C$ depends only on the spatial dimension. See Chapter 8 of \cite{Ell_Krylov_1996} for details.
\end{proof}

\begin{lemma}[H\"older continuity of $E_g$]\label{Holder cont of E_g} Let
\begin{equation}\label{Y comp condi}
    \begin{gathered}
        1<p_0<\frac{3}{2}, \quad 3<q_0<\frac{11}{3}, \quad0<\alpha<1,\text{ and }
        w =w(x,v)=\langle x\rangle\langle v\rangle^8.
    \end{gathered}
\end{equation}Suppose that $g\in Y_{T,\alpha,p_0,q_0,w}$ and satisfies \eqref{bound g}. Then the coefficient $J^\varepsilon(t)\chi(\varepsilon x) E_g(t,x)$ is H\"older continuous in $x$ with exponent $\alpha$ and in $t$ with exponent $\alpha/2$. 
In particular, it is parabolically H\"older continuous.
\end{lemma}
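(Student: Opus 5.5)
The coefficient factors as $J^\varepsilon(t)\chi(\varepsilon x)\cdot E_g(t,x)$. The factors $J^\varepsilon$ and $\chi(\varepsilon x)$ are smooth with bounded derivatives, since $J^\varepsilon$ is a mollified indicator and $\chi\in C^2_c$. They are also bounded. Hence they are Lipschitz in $t$ and $x$, and so parabolically Hölder on bounded time intervals. By Lemma~\ref{L^infty bound of coefficients}, $E_g$ is bounded. The product rule for Hölder seminorms, $[ab]_{C^\alpha}\le \|a\|_\infty[b]_{C^\alpha}+\|b\|_\infty[a]_{C^\alpha}$, therefore reduces the claim to proving Hölder continuity of $E_g=\mp\nabla_x(-\Delta_x)^{-1}\rho_g$ in $x$ with exponent $\alpha$ and in $t$ with exponent $\alpha/2$.

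\textbf{Spatial regularity.} We use the potential-theoretic estimate for the Newtonian field: if $\rho\in L^1\cap L^p(\mathbb{R}^3)$ with $p>3$, then $\nabla(-\Delta)^{-1}\rho\in C^{0,1-3/p}$, with seminorm $\lesssim\|\rho\|_{L^p}$. This follows from Morrey's inequality applied to $D^2(-\Delta)^{-1}\rho\in L^p$ (Calderón--Zygmund), or directly from splitting the kernel difference $\frac{x_1-y}{|x_1-y|^3}-\frac{x_2-y}{|x_2-y|^3}$ into the regions $|y-x_1|<2|x_1-x_2|$ and its complement, with Hölder's inequality on each. To bring this in, I would show that $\rho_g(t,\cdot)\in L^{q}_x$ for some $q>3$, uniformly in $t$. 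Here the assumption $g\in W^{1,2}_{q_0,w}$ with $w=\langle x\rangle\langle v\rangle^8$ and $q_0>3$ is used. By Hölder's inequality in $v$,
\begin{align*}
|\rho_g(t,x)|\le \Big(\int |g|^{q_0}\langle v\rangle^8\,dv\Big)^{1/q_0}\Big(\int\langle v\rangle^{-8q_0'/q_0}\,dv\Big)^{1/q_0'},
\end{align*}
and the last integral is finite because $8/(q_0-1)>3$ for $q_0<11/3$. This is exactly where the range $3<q_0<11/3$ enters. It gives $\rho_g\in L^{q_0}_{t,x}$. Combining with the pointwise bound \eqref{bound g}, which gives $\rho_g\in L^\infty_{t,x}$ (in fact $\lesssim C_g$ uniformly, after integrating $\langle x-\chi v t\rangle^{-m}\langle v\rangle^{-m}$ in $v$), we get $\rho_g(t)\in L^1\cap L^\infty$ for each $t$. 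In fact the pointwise bound alone already gives $\rho_g(t)\in L^1\cap L^\infty$ uniformly in $t$, so $E_g(t)$ is $C^{0,\beta}$ for every $\beta<1$ (log-Lipschitz). The weighted Sobolev membership serves mainly the time regularity.

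\textbf{Time regularity.} This is the main obstacle, because $E_g(t_1)-E_g(t_2)=\nabla(-\Delta)^{-1}(\rho_g(t_1)-\rho_g(t_2))$ must be controlled by $|t_1-t_2|^{\alpha/2}$. I would interpolate. On one hand, the $C^{2,\alpha}_{\rm para}$ regularity of $g$ gives $\|\rho_g(t_1)-\rho_g(t_2)\|_{L^\infty_x}$: split the $v$-integral into $|v|\le R$, where we use $[g]_{C^\alpha_{\rm para}}|t_1-t_2|^{\alpha/2}R^3$, and $|v|>R$, where we use the decay $\langle v\rangle^{-m}$, with $m>3$, to make the tail $\lesssim R^{3-m}$. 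On the other hand, the decay bound \eqref{bound g} controls $\|\rho_g(t_1)-\rho_g(t_2)\|_{L^1_x}$ uniformly. The gradient of the Newtonian potential then satisfies $\|\nabla(-\Delta)^{-1}h\|_{L^\infty}\lesssim\|h\|_{L^1}^{1/3}\|h\|_{L^\infty}^{2/3}$, by splitting the kernel at radius $r$ and optimizing in $r$. Hence $|E_g(t_1)-E_g(t_2)|\lesssim |t_1-t_2|^{c\alpha/2}$ for some $c<1$ after optimizing in $R$. This loses a power.

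To recover the full exponent $\alpha/2$, I would instead use $\partial_tg\in C^\alpha_{\rm para}\cap L^\infty$, which follows from $g\in C^{2,\alpha}_{\rm para}$, together with the decay. Writing $\rho_g(t_1)-\rho_g(t_2)=\int_{t_2}^{t_1}\rho_{\partial_tg}\,ds$ then gives a Lipschitz-in-time bound, provided $\rho_{\partial_t g}(s)\in L^1\cap L^\infty_x$. The $L^\infty$ part comes from the $W^{1,2}_{q_0,w}$ bound on $\partial_tg$ plus Hölder in $v$ as above. The $L^1$ part comes from $W^{1,2}_{p_0}$, using $p_0<3/2$ to control the integrability of the field of an $L^{p_0}$ density via Hardy--Littlewood--Sobolev. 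Since the time interval is bounded, Lipschitz in $t$ implies $\alpha/2$-Hölder. Combining the spatial and temporal estimates through $|E(t_1,x_1)-E(t_2,x_2)|\le|E(t_1,x_1)-E(t_1,x_2)|+|E(t_1,x_2)-E(t_2,x_2)|$ yields parabolic Hölder continuity.
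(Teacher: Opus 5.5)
Your reduction via the product rule and your spatial step are fine. The bound \eqref{bound g} alone gives $\rho_g(t)\in L^1\cap L^\infty$ uniformly in $t$, so $E_g(t)$ is log-Lipschitz and hence $\alpha$-H\"older in $x$, uniformly in $t$. The paper instead tries to prove a full Lipschitz bound by integrating by parts. Your route is the more robust one: that integration by parts actually leaves the Calder\'on--Zygmund kernel $\partial_{y_i}\partial_{y_j}|y|^{-1}$, not $\Delta_y|y|^{-1}$, so the remaining integral does not vanish.

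\textbf{The gap is in the time step.} Your final argument needs $\rho_{\partial_t g}(s)\in L^1_x\cap L^\infty_x$ with bounds uniform in $s$. Neither part follows from the hypotheses.
\begin{itemize}
\item For the $L^\infty_x$ part: the $W^{1,2}_{q_0,w}$ norm is a space--time integral norm. H\"older in $v$ only gives $|\rho_{\partial_tg}(\tau,y)|\lesssim(\int|\partial_tg(\tau,y,v)|^{q_0}\langle v\rangle^8\,dv)^{1/q_0}$. That function lies in $L^{q_0}(d\tau\,\langle y\rangle dy)$, not in $L^\infty_{\tau,y}$. The bound $\partial_tg\in L^\infty$ coming from $C^{2,\alpha}_\textup{para}$ carries no decay in $v$, so it does not help.
\item For the $L^1_x$ part: unweighted $\partial_tg\in L^{p_0}(\mathcal{Q}_T)$ gives no $L^1_x$ control of $\int|\partial_tg|\,dv$. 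Hardy--Littlewood--Sobolev maps an $L^{p_0}$ density to an $L^r$ field with $1/r=1/p_0-1/3$, never to a pointwise bound.
\end{itemize}
With only time-integrated control of $\partial_t g$, a Lipschitz-in-time estimate is not available.

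The missing idea is to apply H\"older jointly in $(y,v)$ against the full weight $w=\langle y\rangle\langle v\rangle^8$, including the factor $\langle y\rangle$ you discarded:
\[
|E_g(t_2,x)-E_g(t_1,x)|\lesssim\int_{t_1}^{t_2}\|\partial_tg(\tau)\|_{L^{q_0}_w}\,d\tau\cdot\sup_{x}\Big(\iint_{\mathbb{R}^6}\frac{dy\,dv}{|x-y|^{2q_0'}\langle y\rangle^{q_0'/q_0}\langle v\rangle^{8q_0'/q_0}}\Big)^{1/q_0'} .
\]
Each restriction on $q_0$ has a job here:
\begin{itemize}
\item $q_0>3$ gives $2q_0'<3$, which is integrability at $y=x$.
\item The $\langle y\rangle$ weight raises the far-field decay to order $(2q_0+1)/(q_0-1)>3$. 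This is essential, because $|y|^{-2}\notin L^{q_0'}$ near infinity once $q_0>3$.
\item $q_0<11/3$ gives $8/(q_0-1)>3$, which is integrability in $v$.
\end{itemize}
H\"older in $\tau$ then gives $|t_2-t_1|^{1-1/q_0}\|\partial_tg\|_{L^{q_0}_w(\mathcal{Q}_T)}$. Since $1-1/q_0>2/3>\alpha/2$, this yields the $\alpha/2$-H\"older bound on the bounded interval $[0,T]$. This is exactly the paper's Step 2.
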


\begin{proof}
\textbf{Step 1: H\"older continuity in space.}
Since $J^\varepsilon(t)\chi(\varepsilon x)$ is smooth with compact support, it suffices to establish the local H\"older continuity of $E_g(t,x)$ in $x$. We begin with showing the Lipschitz continuity of $E_g(t,x)$ uniformly in $t\in[0,T]$.

Let $x_1,x_2\in\mathbb{R}^3$, fix $t\in[0,T]$, and define $x_0 \eqdef x_1 + \tau(x_2 - x_1)$ for $\tau\in[0,1]$. Then by the fundamental theorem of calculus, we can write the difference as
\begin{align*}
|E_g&(t,x_1) - E_g(t,x_2)| \nonumber
= \left|\frac{1}{4\pi}\int_0^1\int_{\mathbb{R}^3} \int_{\mathbb{R}^3} (\nabla_xg)(t,x_0 - y,v)\cdot(x_2-x_1) \nabla_y\left(\frac{1}{|y|} \right)\,dydvd\tau\right|.
\end{align*}We perform integration by parts with respect to $y$, using the identity:
\begin{multline*}
\int_{|y|>\varepsilon} [(\nabla_xg)(t,x_0 - y, v)\cdot(x_2-x_1)]\nabla_y\left(\frac{1}{|y|}\right)\,dy \\
= -\int_{|y|=\varepsilon} g(t,x_0-y,v) \left(-\frac{y}{|y|^3}\right)(x_2 - x_1) \cdot \frac{y}{|y|}dS_y 
+ \int_{|y|>\varepsilon} g(t,x_0-y,v)(x_2 - x_1)\Delta_y\left(\frac{1}{|y|}\right)\,dy.
\end{multline*}Noting that $\Delta_y \left(\frac{1}{|y|}\right) =0$ for $y\neq0$, the second integral vanishes. The first boundary integral term becomes:
\begin{align*}
\int_{\mathbb{S}^2}g(t,x_0-\varepsilon \omega,v) \omega [(x_2 - x_1)\cdot \omega]d\omega\to \frac{4\pi}{3}(x_2-x_1)g(t,x_0,v) \quad \textup{ as }\varepsilon\to0^+ 
\end{align*}a.e. in $\mathcal{Q}_T$. Thus, we obtain
\begin{align*}
|E_g(t,x_1) - E_g(t,x_2)|&\le \frac{1}{3}|x_2 - x_1| \int_0^1 \int_{\mathbb{R}^3} g(t,x_1 + \tau(x_2-x_1), v)\,dvd\tau.
\end{align*}Since $g(t,x,v)\le C_g\langle v\rangle^{-m}$ for some $m>3$ by \eqref{bound g}, we have
\begin{align*}
|E_g(t,x_1) - E_g(t,x_2)| \le C(m,g)|x_1 - x_2|.
\end{align*}Hence, $E_g$ is Lipschitz continuous with respect to $x$, uniformly in $t$.

\textbf{Step 2: H\"older continuity in time.}
Let $t_1<t_2\in[0,T]$ and fix $x\in\mathbb{R}^3$. Then
\begin{align*}
|E_g(t_2,x) - E_g(t_1,x)| \le \frac{1}{4\pi} \int_{t_1}^{t_2}\int_{\mathbb{R}^3} \int_{\mathbb{R}^3} |\partial_tg(\tau, y,v)|\frac{ 1}{|x-y|^2}\,dvdyd\tau.
\end{align*}To estimate this, apply H\"older's inequality in $(y,v)$ with exponents $q_0>3$, $q_0'$ such that $1/q_0 + 1/q_0'=1$. Then
\begin{multline*}
|E_g(t_2,x) - E_g(t_1,x)|\\ \le \frac{1}{4\pi}\int_{t_1}^{t_2} \left(\int_{\mathbb{R}^3\times\mathbb{R}^3} |\partial_tg(\tau,y,v)|^{q_0} \langle y\rangle \langle v\rangle^8 dvdy\right)^{\frac{1}{q_0}} 
\left(\int_{\mathbb{R}^3\times\mathbb{R}^3} \frac{1}{|x-y|^{2q_0'}\langle y\rangle^{\frac{q_0'}{q_0}}} \frac{1}{\langle v\rangle^{\frac{8q_0'}{q_0}}}\,dvdy\right)^{\frac{1}{q_0'}}\,d\tau\\
=: \frac{1}{4\pi}\int_{t_1}^{t_2} \left(\textup{I}\right)^{\frac{1}{q_0}} \left(\textup{II}\right)^{\frac{1}{q'_0}} d\tau,
\end{multline*}for appropriate weights $\langle x\rangle\langle v\rangle^8$ ensuring the integrability of the second factor (e.g., using Muckenhoupt weight theory). Under the conditions:
\begin{gather}
2q_0'<3,\tag{for local integrability of $\textup{II}$ near $y=x$}\\
\left(2+ \frac{1}{q_0} \right)q'_0 > 3, \,\frac{8}{q_0}q'_0 >3, \tag{for integrability of $\textup{II}$ at infinity}\\
1+8<6(q_0-1), \tag{Muckenhoupt $A_{q_0}$ weight condition for $\langle y\rangle \langle v\rangle^8 $}
\end{gather}the second integral is uniformly bounded in $x$. If $q_0\in (3,11/3)$, all the three conditions above are satisfied. Thus, with $w=\langle y\rangle \langle v\rangle^8 ,$ we have
\begin{align*}
|E_g(t_2,x) - E_g(t_1,x)| &\lesssim \int_{t_1}^{t_2} \lVert \partial_\tau g(\tau)\rVert_{L^{q_0}_w}\,d\tau \lesssim |t_2-t_1|^{1-1/q_0} \lVert \partial_tg\rVert_{L^{q_0}_w(\mathcal{Q}_T)}.
\end{align*}Then we conclude
\begin{multline*}
|J^\varepsilon(t_2)E_g(t_2,x) - J^\varepsilon(t_1)E_g(t_1,x)| \\
\le |J^\varepsilon(t_2)| |E_g(t_2,x) - E_g(t_1,x)| + |J^\varepsilon(t_2) - J^\varepsilon(t_1)| |E_g(t_1,x)|\\
\lesssim_{g,\varepsilon} |t_2 - t_1|^{\alpha/2} (T+1)^{1-1/q_0-\alpha/2} + |t_2 - t_1|^{\alpha/2} \lesssim |t_2-t_1|^{\alpha/2}.
\end{multline*}Here, we used the fact that $1-1/q_0-\alpha/2>0$ for all $\alpha\in(0,1)$, which follows from the assumption $q_0\in(3,11/3)$ in \eqref{Y comp condi}.

\textbf{Step 3: Conclusion.}
Combining both parts, we have
\begin{itemize}
\item $J^\varepsilon(t)\chi(\varepsilon x)E_g(t,x)$ is H\"older continuous in $x$ with exponent $\alpha$,
\item and is H\"older continuous in $t$ with exponent $\alpha/2$.
\end{itemize}Thus, we have
\begin{align*}
|J^\varepsilon(t_1)\chi(\varepsilon x_1)E_g(t_1,x_1) - J^\varepsilon(t_2)\chi(\varepsilon x_2)E_g(t_2,x_2)| &\lesssim |x_1 - x_2|^\alpha + |t_1 - t_2|^{\alpha/2}\\
    &\lesssim (|x_1 - x_2| + |t_1 - t_2|^{1/2})^\alpha
\end{align*}for all $t_1,t_2\in[0,T]$ and $x_1,x_2\in\mathbb{R}^3$; that is, $J^\varepsilon(t)\chi(\varepsilon x)E_g(t,x)$ is parabolically H\"older continuous.
\end{proof}

Now we have sufficient regularity of the coefficients of the linear operator $\mathcal{L}_{\varepsilon, g}$ to solve the corresponding equation between the spaces $Y_{T,\alpha,p_0,q_0,w}$ and $Y'_{T,\alpha,p_0,q_0,w}$, as stated in Lemma~\ref{Linear existence}. 
Before proceeding further, we first recall the solvability of the heat equation between $Y_{T,\alpha,p_0,q_0,w}$ and $Y'_{T,\alpha,p_0,q_0,w}$.

\begin{lemma}[Solvability of the heat operator]\label{Heat_solvability}Let $\alpha,p_0,q_0,w$ be defined as \eqref{Y comp condi}. For each $\lambda>0$, the heat operator $(\partial_t-\Delta_{x,v}+\lambda) : Y_{T,\alpha,p_0,q_0,w} \to Y'_{T,\alpha,p_0,q_0,w}$ is uniquely solvable.
\end{lemma}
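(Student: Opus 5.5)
We treat the three components of $Y_{T,\alpha,p_0,q_0,w}=C^{2,\alpha}_{\textup{para}}\cap W^{1,2}_{p_0}\cap W^{1,2}_{q_0,q_0,w}$ separately and glue them through uniqueness. Fix $F\in Y'_{T,\alpha,p_0,q_0,w}$. The domain is $\Omega_T=(-\infty,T)\times\mathbb{R}^6$, so no initial condition is imposed, and the zeroth-order term $\lambda>0$ provides the decay at $t\to-\infty$. The natural candidate is Duhamel's formula
\begin{align*}
u(t,z)=\int_{-\infty}^t e^{-\lambda(t-s)}\big(\Gamma(t-s,\cdot)\star_z F(s,\cdot)\big)(z)\,ds,\qquad z=(x,v)\in\mathbb{R}^6,
\end{align*}
where $\Gamma$ is the six-dimensional heat kernel. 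Because of the factor $e^{-\lambda(t-s)}$, this integral converges absolutely whenever $F$ is bounded, and also whenever $F$ lies in any of the $L_p$-type spaces under consideration.

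\textbf{Step 1 (H\"older part).} For $F\in C^{\alpha}_{\textup{para}}(\Omega_T)$ we invoke the global parabolic Schauder estimate (Theorem \ref{Schauder}, with constant coefficients and the $\lambda$ term). It gives a unique solution $u\in C^{2,\alpha}_{\textup{para}}(\Omega_T)$ with
\[
\lambda\|u\|_{L^\infty}+[u]_{C^{2,\alpha}_{\textup{para}}}\lesssim \|F\|_{C^\alpha_{\textup{para}}}.
\]
We expect the solution to coincide with the Duhamel formula. Uniqueness in the class of bounded functions follows from the maximum principle applied to $e^{\lambda t}$-type barriers.

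\textbf{Step 2 ($L_{p_0}$ and weighted $L_{q_0,w}$ parts).} For $F\in L_{p_0}(\Omega_T)$ we apply the unweighted $W^{1,2}_p$ solvability (Theorem \ref{L^p}) and obtain a unique $u_1\in W^{1,2}_{p_0}$. For $F\in L_{q_0,q_0,w}$ we apply the weighted mixed-norm result of Dong--Kim (Theorem \ref{Weighted mixed L^p estimate}). It requires $w\in A_{q_0}$, which holds for $w=\langle x\rangle\langle v\rangle^8$ by the remark following the notation. This gives a unique $u_2\in W^{1,2}_{q_0,q_0,w}$. In both cases the estimate has the form
\[
\lambda\|u\|+\|\nabla u\|+\|\nabla^2u\|+\|\partial_tu\|\lesssim\|F\|,
\]
where $\lambda$ enters only through $\lambda^{-1}$-type constants.

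\textbf{Step 3 (identification).} This is the main obstacle: we must show that $u=u_1=u_2$, so that the single solution lies in the intersection. I would prove that each solution equals the Duhamel formula. For $u_1$ and $u_2$, approximate $F$ by $F_k\in C_c^\infty(\Omega_T)$, converging in $C^{\alpha'}_{\textup{para}}$ locally and in both $L_p$ norms simultaneously (by mollification and cutoff). For smooth compactly supported data the three solutions are all given by the explicit kernel representation. The a priori estimates then pass the identity to the limit, and a subsequence converges a.e. Alternatively, uniqueness can be handled directly. The difference $d=u-u_1$ solves the homogeneous equation and lies in $L^\infty+W^{1,2}_{p_0}$. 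Testing against the adjoint heat kernel, or using Fourier transform in $z$ for a.e. $t$ together with the $\lambda>0$ damping, shows $d=0$; the same applies to $u-u_2$, since elements of the weighted space are tempered distributions thanks to the polynomial weight. Combining the three memberships gives $u\in Y_{T,\alpha,p_0,q_0,w}$ with $(\partial_t-\Delta_{x,v}+\lambda)u=F$. Uniqueness in $Y$ follows from uniqueness in any one component, which proves the lemma.
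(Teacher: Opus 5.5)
Your argument is essentially the paper's. The $\lambda$-damped Duhamel integral is the common object, the standard Schauder, $W^{1,2}_{p_0}$ and weighted $W^{1,2}_{q_0,w}$ theory for the heat operator gives each membership, and uniqueness glues the pieces together. The only real difference is the gluing:
\begin{itemize}
\item The paper starts from $u=G_\lambda\star g$. Krylov's Lemma 5.1.1 is a statement about that very convolution, so the $W^{1,2}_{p_0}$ identification is automatic. It then obtains $C^{2,\alpha}_{\textup{para}}$ through a direct H\"older estimate on the kernel, mollification, interior Schauder and Arzel\`a--Ascoli, and finally matches $u$ with Krylov's global Schauder solution by uniqueness.
\item Your second identification route proves a Liouville-type uniqueness for $\partial_t d-\Delta_{x,v}d+\lambda d=0$ with $d\in L^\infty+W^{1,2}_{p_0}+W^{1,2}_{q_0,w}$. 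You pair $d(t)$ with $e^{-\lambda(t-s)}\Gamma(t-s)\star\psi$ and let $s\to-\infty$. This is more modular and avoids the mollification detour.
\end{itemize}
Two small points. First, Theorems \ref{Schauder}, \ref{L^p} and \ref{Weighted mixed L^p estimate} are a priori estimates, so existence in your Steps 1--2 must come from the corresponding solvability results (e.g.\ Krylov's Theorem 8.7.3 in the H\"older case, as the paper uses). Second, your first identification route needs compactness plus uniqueness of bounded solutions for the H\"older component, since $F_k\to F$ only locally there.

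One step you take over from the paper (the paper relies on it too) does not hold as stated. $\langle v\rangle^{8}\in A_{q_0}(\mathbb{R}^3)$ if and only if $8<3(q_0-1)$, i.e.\ $q_0>11/3$. To see this, note that for $q_0<11/3$ the function $\langle v\rangle^{-8/(q_0-1)}$ is integrable, so on $B_R(0)$ the $A_{q_0}$ quantity behaves like $R^{8}\cdot R^{-3(q_0-1)}\to\infty$. This integrability is exactly what Lemma \ref{Holder cont of E_g} needs, and it is incompatible with the $A_{q_0}$ condition. Consequently $w=\langle x\rangle\langle v\rangle^8$ is not an admissible weight for $q_0\in(3,11/3)$, whichever product splitting you use, and Theorem \ref{Weighted mixed L^p estimate} (like the weighted result the paper cites) does not apply. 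The remark's condition $1+8<6(q_0-1)$ treats $w$ as a radial power weight on $\mathbb{R}^6$, which it is not.

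The weighted membership can still be recovered, precisely because $\lambda>0$. The kernel of $D^2_{x,v}(\partial_t-\Delta_{x,v}+\lambda)^{-1}$ is Calder\'on--Zygmund at time lags below $1$ and has exponentially decaying tails beyond. Hence only a unit-scale (local) $A_p$ condition is needed, and every polynomial weight satisfies it. Concretely, localize to unit cubes, where $w$ is comparable to a constant. Then absorb the growth $w(z)\lesssim\langle z-z'\rangle^{9}w(z')$ into the Gaussian and $e^{-\lambda t}$ tails. The bound on $\partial_t u$ follows from the equation.
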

\begin{proof}
Given $g\in Y'_{T,\alpha,p_0,q_0,w}$, define 
$
u= G_\lambda\star_{(t,x,v)}g,
$ where
\begin{align*}
G_\lambda(t,x,v) \eqdef \frac{1_{t>0}}{(4\pi t)^{3}} \exp\left(-\frac{|x|^2+|v|^2}{4t} - \lambda t\right),
\end{align*}is the fundamental solution to the heat equation 
$
\partial_tu - \Delta_{x,v}u +\lambda u=0,$ and $\star_{(t,x,v)}$ denotes convolution over $\mathbb{R}^7$ in the $(t,x,v)$ variables. By Lemma 5.1.1(i) in \cite{Ell_Krylov_2008} and its proof, the function $u$ satisfies:
\begin{enumerate}[label=(\roman*)]
\item $u\in W_{p_0}^{1,2}(\Omega_T)$,
\item $\partial_tu -\Delta_{x,v}u + \lambda u = g$ a.e. in $\Omega_T$.
\end{enumerate}Moreover, by Theorem 2.2(B) in \cite{Ell_Ping_2017}, we also have
$
u\in W_{q_0,w}^{1,2}(\Omega_T),
$ where the weight class used in \cite{Ell_Ping_2017} is a parabolically modified Muckenhoupt class (based on the distance $d((t,x), (s,y))=\max\{|t-s|^{1/2}, |x-y|\}$). Since our weight $w(x,v) = \langle x\rangle\langle v\rangle^8$ is independent of time and it is in the Muckenhoupt class using the uniform distance $d((t,x), (s,y)) = \max\{|t-s|, |x-y|\}$, it also belongs to this modified Muckenhoupt class.

From the precious steps, we know all the integrability of the function $u$ and its weak derivatives. To prove that $u\in C^{2,\alpha}_\textup{para}(\Omega_T)$, which is the final ingredient needed to establish the surjectivity of the operator $\partial_t-\Delta_{x,v} + \lambda$, it suffices to show $u$ has parabolic H\"older regularity. Injectivity follows from standard \textit{a priori} estimates, such as those in Theorems \ref{Schauder}-\ref{Weighted mixed L^p estimate}.

To establish H\"older continuity, we first estimate the parabolic H\"older seminorm of $u$. Fix a spatial unit vector $e_1$. Then for small $h\in\mathbb{R}$, using the H\"older continuity of $g$, we have
\begin{multline*}
|u(t,x+he_1,v) - u(t,x,v)| \\
\le \int_{-\infty}^T\int_{\mathbb{R}^6} G_\lambda(s,y,u)|g(t-s,x-y+he_1,u-v) - g(t-s,x-y,u-v)|\,dudyds\\
\le [g]_{C_\textup{para}^\alpha(\Omega_T)}|h|^\alpha \int_{-\infty}^T \int_{\mathbb{R}^6} G_\lambda(s,y,u)\,dudyds
\le \frac{[g]_{C_\textup{para}^\alpha(\Omega_T)}}{\lambda}|h|^\alpha.
\end{multline*}Similar estimates in time and space directions show that $u\in C^\alpha_\textup{para}(\Omega_T)$. To gain the regularity $C^{2,\alpha}_{\textup{para}}$, define mollified functions $u_n\eqdef \xi_n\star_{(t,x,v)}u$, where
\begin{align*}
\xi_n(t,x,v) = n^7\xi(nt,nx,nv),
\end{align*}where $\xi\in C^\infty_c(\mathbb{R}^7)$, $\textup{supp }\xi=\{(t,x,v) : |t|^2 + |x|^2 + |v|^2 \le 1\}= B_1$, $\xi\equiv1$ on $B_{1/2}$, and $0\le\xi\le1$. Then, since $u_n\in C^\infty(\Omega_T)$, the interior Schauder estimate (Theorem 8.11.1 in \cite{Ell_Krylov_1996}) implies that for each $R>0$, we have
\begin{align*}
\| u_n\|_{C^{2,\alpha}_\textup{para}(Q_R)} \le N \left(\| g_n\|_{C^\alpha_\textup{para}(\Omega_T)} + \|u_n\|_{L^\infty(Q_{2R})} \right),
\end{align*}where $Q_R = (-R^2,0)\times \{(x,v): |x|^2 + |v|^2 \le R^2\}$, and $N$ depends only on $R, \alpha$, and the ellipticity constant (equal to 1 in our case). Since $g\in C^\alpha_\textup{para}(\Omega_T)$, the terms on the right-hand side are uniformly bounded in $n$. Moreover, by Young's inequality for convolutions and the uniform continuity of $u$, the $L^\infty$ norm of $u_n$ in $Q_{2R}$ is also uniformly bounded. Thus, $\{u_n\}$ is uniformly bounded in $C^{2,\alpha}_\textup{para}(Q_R)$. By the Arzela-Ascoli theorem, up to a subsequence, $u_n \to \bar{u}\in C^2_1(Q_{2R})$, where $C^2_1$ denotes functions with classical derivatives of order $\ge2$ in $(x,v)$ and $\ge1$ in $t$. 

Meanwhile, we also have $u_n\to u$ in $C^\alpha_\textup{para}(\Omega_T)$, so
\begin{align*}
\| u-\bar{u}\|_{L^\infty(Q_{2R})} \le \|u-u_n\|_{C^\alpha_\textup{para}(\Omega_T)} + \|u_n - \bar{u}\|_{C^{2}_1(Q_{2R})} \to0 \quad \textup{ as }\,\,n\to\infty.
\end{align*}Hence $u=\bar{u}\in C^{2,\alpha}_\textup{para}(Q_{2R})$, and this holds for all $R>0$ with arbitrary center of the cylinder, implying
\begin{align*}
u\in C^{2,\alpha}_{\textup{para, loc}}(\Omega_T).
\end{align*}To obtain global regularity, note that $\| u\|_{C^{2,\alpha}_\textup{para}(Q_R(z_0)\cap \Omega_T)}<\infty$ for all $R>0$ and $z_0\in\Omega_T$. Also, by Theorem 8.7.3 in \cite{Ell_Krylov_1996}, since $g\in C^\alpha_\textup{para}(\Omega_T)$, there exists a unique global solution $u_0\in C^{2,\alpha}_\textup{para}(\Omega_T)$ to
\begin{align*}
\partial_tu_0 - \Delta_{x,v}u_0+\lambda u_0 = g, \quad \textup{ in }\,\,\Omega_T.
\end{align*}By uniqueness, $u=u_0$ in every compact subset of $\Omega_T$, and thus
\begin{align*}
u\in C^{2,\alpha}_\textup{para}(\Omega_T).
\end{align*}This completes the proof.
\end{proof}

We are now in a position to address the solvability of the linear approximate equation \eqref{semi}.
\begin{lemma}[Solution to a linear problem] \label{Linear existence}Let $\alpha,p_0,q_0,w$ be defined as \eqref{Y comp condi}. Let $\varepsilon>0$ and let $g\in Y_{T,\alpha,p_0,q_0,w}$ with $g\ge0$ and let $f^\textup{in}_\varepsilon\in C^\infty_c(\mathbb{R}^6)$ be the approximated initial data defined in \eqref{approx initial}. Then for each $T>0$, there exists a unique function $f\in Y_{T,\alpha,p_0,q_0,w}$ that solves
\begin{align}\label{semi problem}
\mathcal{L}_{\varepsilon,g}(f) = \mathcal{L}_{\varepsilon,g}(f^\textup{in}_\varepsilon)I^\varepsilon \quad \textup{ in }\,\, \mathcal{Q}_T,\textup{ and }f(t,\cdot,\cdot) = f^\textup{in}_\varepsilon(\cdot,\cdot) \quad \textup{ in }\,\, t\in[0,\varepsilon),
\end{align} where $\mathcal{L}_{\varepsilon,g}$ is defined as in \eqref{def.Leg}.
\end{lemma}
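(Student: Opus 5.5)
The plan is to use the method of continuity, connecting $\mathcal{L}_{\varepsilon,g}$ to the heat operator of Lemma~\ref{Heat_solvability}. We work on the extended domain $\Omega_T$ and reduce to a zero-initial-type problem by writing $f = f^\textup{in}_\varepsilon + h$. Then \eqref{semi problem} becomes
\begin{align*}
\mathcal{L}_{\varepsilon,g}(h) = -\mathcal{L}_{\varepsilon,g}(f^\textup{in}_\varepsilon)(1-I^\varepsilon) \eqdef F.
\end{align*}
Since $I^\varepsilon\equiv 1$ on a neighbourhood of $[0,\varepsilon^\kappa]$, $F$ vanishes for $t$ near $0$. For $t<0$ we set the coefficients equal to their values continued appropriately; $J^\varepsilon$ already cuts off the field term. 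By Lemmas~\ref{L^infty bound of coefficients}, \ref{lem.Hold.cont} and \ref{Holder cont of E_g}, $F\in Y'_{T,\alpha,p_0,q_0,w}$, because $f^\textup{in}_\varepsilon\in C_c^\infty$ and the coefficients are bounded and parabolically H\"older. The time condition $f=f^\textup{in}_\varepsilon$ on $[0,\varepsilon)$ should then follow from causality: uniqueness for the forward parabolic problem with zero data on $(-\infty,t_0)$ forces $h\equiv0$ wherever $F\equiv0$ in the past. This needs $\varepsilon^\kappa\gtrsim\varepsilon$, i.e.\ $\kappa\le1$, which I would check against the definition of $\kappa$.

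Next we introduce the family
\begin{align*}
\mathcal{L}^\tau \eqdef (1-\tau)(\partial_t - \Delta_{x,v}+\lambda) + \tau(\mathcal{L}_{\varepsilon,g}+\lambda), \qquad \tau\in[0,1],
\end{align*}
acting $Y_{T,\alpha,p_0,q_0,w}\to Y'_{T,\alpha,p_0,q_0,w}$. The operator $\mathcal{L}^\tau$ is uniformly parabolic with ellipticity constant at least $\min\{1,\varepsilon\}$, since $(-\Delta_v)^{-1}_\varepsilon g\ge0$ because $g\ge0$. Its coefficients are bounded and H\"older uniformly in $\tau$. The damping $\lambda$ (large, or replaced by the substitution $h=e^{\lambda t}\tilde h$) absorbs the zeroth-order term $-g$, which is bounded. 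The key step is the uniform a priori estimate
\begin{align*}
\|h\|_{Y_{T,\alpha,p_0,q_0,w}} \le N\,\|\mathcal{L}^\tau h\|_{Y'_{T,\alpha,p_0,q_0,w}}, \qquad N \text{ independent of }\tau.
\end{align*}
We obtain it by summing three global estimates on $\Omega_T$: the Schauder estimate (Theorem~\ref{Schauder}) for the $C^{2,\alpha}_\textup{para}$ part, the $L^{p_0}$ estimate (Theorem~\ref{L^p}) for $W^{1,2}_{p_0}$, and the weighted mixed-norm estimate (Theorem~\ref{Weighted mixed L^p estimate}) for $W^{1,2}_{q_0,w}$ with $w=\langle x\rangle\langle v\rangle^8\in A_{q_0}$. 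Each estimate needs only bounded, uniformly continuous (indeed H\"older) leading coefficients and bounded lower-order coefficients. The transport coefficients $\chi(\varepsilon v)v$ and $J^\varepsilon\chi(\varepsilon x)E_g$ are bounded first-order terms, which is exactly why the cutoffs were introduced.

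With this estimate in hand, the standard continuity argument applies. The set of $\tau$ for which $\mathcal{L}^\tau$ is onto is nonempty by Lemma~\ref{Heat_solvability}. It is open and closed because $\mathcal{L}^\tau-\mathcal{L}^{\tau_0}$ is bounded $Y\to Y'$ with norm $\lesssim|\tau-\tau_0|$, and the inverse bound is uniform. Hence $\mathcal{L}^1$ is invertible, which gives existence of $h$. Uniqueness follows from the same a priori estimate, and we then undo the exponential substitution.

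The main obstacle is verifying that the coefficients genuinely meet the hypotheses of the three global estimates in all three norms simultaneously, and in particular that multiplication by them maps $Y$ into $Y'$. One issue is the product $(-\Delta_v)^{-1}_\varepsilon g\,\Delta_v h$ in $L_{q_0,w}$. Here I would use the $L^\infty$ bound from Lemma~\ref{L^infty bound of coefficients} together with $\Delta_v h\in L_{q_0,w}$. A second issue is the H\"older product in $C^\alpha_\textup{para}$, which uses the H\"older seminorms from Lemmas~\ref{lem.Hold.cont}--\ref{Holder cont of E_g}. Tracking the $\varepsilon$-dependence of $N$ is not needed at this stage, since $\varepsilon$ is fixed.
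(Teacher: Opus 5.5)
Your argument is correct and follows the paper's overall scheme. Both extend to $\Omega_T$, work with the shifted operator $\mathcal{L}_{\varepsilon,g}+\lambda$, use the Schauder, $L^{p_0}$ and weighted $L^{q_0}$ estimates, and start the method of continuity from Lemma~\ref{Heat_solvability}. Your proof differs in two places, and both differences are improvements.

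First, for the bound uniform in $\tau$, the paper says it ``interpolates'' the endpoint inequalities for $\mathcal{L}_0$ and $\mathcal{L}_1$. You instead apply the three theorems directly to $\mathcal{L}^\tau$, using that its ellipticity is at least $\min\{1,\varepsilon\}$ and that its coefficient bounds and H\"older norms do not depend on $\tau$. That is the right justification: inverse bounds at the two endpoints alone say nothing about their convex combinations.

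Second, and more substantively, you impose the initial condition differently. The paper applies the inverse to $(\mathcal{L}_{\varepsilon,g}+\lambda)(e^{-\lambda t}f^\textup{in}_\varepsilon)I^\varepsilon$ and asserts $f'=e^{-\lambda t}f^\textup{in}_\varepsilon$ on $[0,\varepsilon]$. That source vanishes for $t\le-\varepsilon^\kappa$, so $f'\equiv0$ there. Moreover $(\mathcal{L}_{\varepsilon,g}+\lambda)\bigl(f'-e^{-\lambda t}f^\textup{in}_\varepsilon I^\varepsilon\bigr)=-e^{-\lambda t}f^\textup{in}_\varepsilon\,\partial_tI^\varepsilon$, which is nonzero on $(-\varepsilon^\kappa,0)$. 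So the asserted identity does not follow. Your decomposition $f=f^\textup{in}_\varepsilon+h$ combined with causality is what actually yields $f=f^\textup{in}_\varepsilon$ on $[0,\varepsilon^\kappa]$.

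To make that step airtight, state explicitly that $F\eqdef0$ for $t\le0$, that is, $F=-\mathcal{L}_{\varepsilon,g}(f^\textup{in}_\varepsilon)(1-I^\varepsilon)\mathbf{1}_{\{t>0\}}$. This is legitimate because the equation is only required in $\mathcal{Q}_T$. It is also smooth in $t$, because $1-I^\varepsilon\equiv0$ on $[0,\varepsilon^\kappa]$.

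If your formula is taken literally on all of $\Omega_T$, the resulting $F$ fails in three ways:
\begin{itemize}
\item it does not vanish for $t<0$;
\item it is not in $L_{p_0}(\Omega_T)$;
\item $e^{-\lambda t}F$ blows up as $t\to-\infty$.
\end{itemize}
Also, ``$F$ vanishes near $0$'' is not what causality needs. It needs $F\equiv0$ on $(-\infty,\varepsilon^\kappa]$, and then the a priori estimate on $\Omega_{\varepsilon^\kappa}$ gives $h\equiv0$ there.

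Uniqueness follows the same way: zero-extend the difference of two solutions, which vanishes on $[0,\varepsilon)$, and apply the estimate. Your caveat that one needs $\varepsilon^\kappa\ge\varepsilon$ is well taken. The paper relies on the same condition tacitly when it writes $I^\varepsilon=1$ on $[0,\varepsilon]$.
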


\begin{proof}
We first work in the extended domain $\Omega_T = (-\infty, T)\times\mathbb{R}^6$. From Lemmas \ref{L^infty bound of coefficients}, \ref{lem.Hold.cont}, and \ref{Holder cont of E_g}, we obtain global Schauder-type \textit{a priori estimate} for the terminal-value problem associated with $\mathcal{L}_{\varepsilon,g}$. In particular, by Theorem \ref{Schauder}, for each $\lambda>0$, there exists a constant $C_S>0$, depending only on $\varepsilon, T, \lambda, g, C_g, \alpha$ such that
\begin{align}\label{lem4.4eq1}
\lVert f\rVert_{C_\textup{para}^{2,\alpha}(\Omega_T)} \le C_S \lVert \mathcal{L}_{\varepsilon,g}(f) + \lambda f\rVert_{C_\textup{para}^{\alpha}(\Omega_T)}, \quad \forall f\in C_\textup{para}^{2,\alpha}(\Omega_T).
\end{align}Similarly, by Theorem \ref{L^p} and Lemmas \ref{L^infty bound of coefficients} and \ref{lem.Hold.cont}, we have $L^{p_0}$-type estimates; i.e., there exists $\lambda_0\ge1$, such that for all $\lambda\ge \lambda_0$ there exists $C_{p_0}>0$ depending only on $\varepsilon, p_0, T, g, C_g,\lambda$ with
\begin{align}\label{lem4.4eq2}
\lVert f\rVert_{W_{p_0}^{1,2}(\Omega_T)} \le C_{p_0}\lVert \mathcal{L}_{\varepsilon,g}(f) + \lambda f\rVert_{L_{p_0}(\Omega_T)}, \quad \forall f\in W_{p_0}^{1,2}(\Omega_T).
\end{align}Moreover, for weighted mixed Sobolev spaces, by \cite{Ell_Dong_2018} or Theorem \ref{Weighted mixed L^p estimate}, there exists $\lambda\ge1$ such that for all $\lambda\ge \lambda_0$, there exists $C_w>0$ depending only on $\varepsilon, q_0, T, g, C_g, w=\langle x\rangle \langle v\rangle^8$ and $\lambda$ such that
\begin{align}\label{lem4.4eq3}
\lVert f\rVert_{W_{q_0,w}^{1,2}(\Omega_T)} \le C_w\lVert \mathcal{L}_{\varepsilon,g} (f) + \lambda f\rVert_{L_{q_0,w}(\Omega_T)}, \quad \forall f\in W_{q_0,w}^{1,2}(\Omega_T).
\end{align}
Then we are now ready to proceed with the method of continuity (Theorem \ref{MoC}). Define the operators $\mathcal{L}_1 \eqdef \mathcal{L}_{\varepsilon, g}+\lambda$ and $\mathcal{L}_0\eqdef \partial_t - \Delta_{x,v}+\lambda$. Then, we obtain the analogous inequalities to \ref{Schauder}-\ref{Weighted mixed L^p estimate} in terms of the operator $\mathcal{L}_0$ as well. Therefore, by interpolating these endpoint inequalities, we obtain that 
there is some constant $C>0$ independent of $\beta\in[0,1]$ such that 
$$\|f\|_{(C_\textup{para}^{2,\alpha}\cap W_{p_0}^{1,2}\cap W_{q_0,w}^{1,2})(\Omega_T)}\le C\|\mathcal{L}_\beta f\|_{(C^\alpha_\textup{para} \cap L_{p_0} \cap L_{q_0,w})(\Omega_T)},$$ where $\mathcal{L}_\beta\eqdef(1-\beta)\mathcal{L}_0+\beta\mathcal{L}_1$ for each $\beta\in[0,1].$
Then by the method of continuity (Theorem \ref{MoC}) and Lemma \ref{Heat_solvability}, the operator $\mathcal{L}_1=\mathcal{L}_{\varepsilon,g} + \lambda:(C_\textup{para}^{2,\alpha}\cap W_{p_0}^{1,2}\cap W_{q_0,w}^{1,2})(\Omega_T)\to (C^\alpha_\textup{para} \cap L_{p_0} \cap L_{q_0,w})(\Omega_T)$ is onto and hence is invertible. 

Now, define the time-regularized source term:
\begin{align*}
h = (\mathcal{L}_{\varepsilon,g} + \lambda)(e^{-\lambda t} f^\textup{in}_\varepsilon)I^\varepsilon \in (C^\alpha_\textup{para} \cap L_{p_0} \cap L_{q_0,w})(\Omega_T),
\end{align*}which is well-defined since $f^\textup{in}_\varepsilon\in C^\infty_c(\mathbb{R}^6)$. Then by the above solvability result, there exists a unique solution $f'\in (C_\textup{para}^{2,\alpha}\cap W^{1,2}_{p_0} \cap W_{q_0,w}^{1,2})(\Omega_T)$ to the equation:
\begin{align*}
\mathcal{L}_{\varepsilon,g} f' + \lambda f' = h \quad \textup{ in }\,\, \Omega_T.
\end{align*}Note that
\begin{itemize}
\item $I^\varepsilon(t) = 1$ on $[0,\varepsilon]$,
\item $I^\varepsilon(t) = 0$ on $(2\varepsilon, T]$,
\end{itemize}so in particular,
\begin{align*}
\mathcal{L}_{\varepsilon,g} f' + \lambda f' = 0\quad &\textup{ in } \left(2\varepsilon, T\right)\times\mathbb{R}^6,\textup{ and }
f' = e^{-\lambda t}f^\textup{in}_\varepsilon\quad \textup{ in } \left[0, \varepsilon\right]\times\mathbb{R}^6.
\end{align*}Define $f(t,x,v) \eqdef e^{\lambda t}f'(t,x,v)$. Then, by linearity of the operator $\mathcal{L}_{\varepsilon,g}+\lambda$, we obtain
\begin{align*}
\mathcal{L}_{\varepsilon,g}(f) = e^{\lambda t}(\mathcal{L}_{\varepsilon,g} f' + \lambda f') = \mathcal{L}_{\varepsilon,g}(f^\textup{in}_\varepsilon)I^\varepsilon,\textup{ and }
\end{align*}  
\begin{equation}\label{f equals fin}
f(t,x,v) = f^\textup{in}_\varepsilon(x,v) \quad \textup{ for }\,\,t\in[0,\varepsilon).
\end{equation}This completes the construction.
\end{proof}

\label{sequence}By applying Lemma \ref{Linear existence} inductively under the assumption \eqref{bound g}, we construct a sequence $\{f^n\}_{n\ge0}$ $\subset Y_{T,\alpha,p_0,q_0,w}$ satisfying the following equation for each $n\ge1$:
\begin{multline}\label{n approx}
\partial_tf^n + \chi\left(\frac{v}{n}\right)v\cdot\nabla_xf^n + J^{\frac{1}{n}}(t)\chi\left(\frac{x}{n}\right)E_{f^{n-1}}\cdot\nabla_vf^n - (-\Delta_v)_{n^{-1}}^{-1}f^{n-1} \Delta_vf^n - f^{n-1} f^n \\
 =\frac{1}{n}\Delta_{x,v} f^n + \mathcal{L}_{\frac{1}{n}, f^{n-1}}(f^\textup{in}_{n^{-1}})I^{\frac{1}{n}}(t) \quad \textup{ in }\,\, \overline{\mathcal{Q}_T},
\end{multline}with $f^0 \equiv 0$. We will explicitly use this sequence in the discussion following Lemma \ref{local comparison}.

\section{Upper Bounds and Passing to the Limit}
\label{boundedness}In this section, we establish the polynomial decay of the sequence $\{f^{n}\}_{n\ge 0}$ constructed in the previous section.  
We first introduce a lemma on the comparison of weights, which will be used to show that each iterated element $f^{n}$ exhibits the desired decay in $X_T^m$.

\begin{lemma}\label{embed}Let $\varepsilon>0, T>0$, and $m>0$. Then for all $(t,x,v)\in\mathcal{Q}_T$, we have
\begin{align*}
 \langle x\rangle^{-m} \langle v\rangle^{-m}\le (2\langle T\rangle)^m \langle x-\chi(\varepsilon v)vt\rangle^{-m},
\end{align*}where $\chi(\cdot)$ is the cut-off function defined as in \eqref{def cutoff}.
\end{lemma}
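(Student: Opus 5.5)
The plan is to reduce the statement to Peetre's inequality. It is equivalent to
\begin{align*}
\langle x-\chi(\varepsilon v)vt\rangle^{m} \le (2\langle T\rangle)^m \langle x\rangle^{m}\langle v\rangle^{m},
\end{align*}
and since $m>0$ it suffices to prove the case $m=1$ and raise both sides to the power $m$:
\begin{align*}
\langle x-\chi(\varepsilon v)vt\rangle \le 2\langle T\rangle\langle x\rangle\langle v\rangle .
\end{align*}

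First, set $a=\chi(\varepsilon v)vt$. By \eqref{def cutoff} we have $0\le\chi\le1$, and $t\in(0,T)$, so $|a|\le |v|T$. Then I use the elementary inequality $\langle y+z\rangle\le \sqrt2\,\langle y\rangle\langle z\rangle$, which is Peetre's inequality with exponent one. It follows from
\begin{align*}
1+|y+z|^2\le 1+2|y|^2+2|z|^2\le 2(1+|y|^2)(1+|z|^2).
\end{align*}
Applying it gives $\langle x-a\rangle\le\sqrt2\langle x\rangle\langle a\rangle$.

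Second, I bound $\langle a\rangle\le \langle T|v|\rangle$. Because $1+T^2|v|^2\le (1+T^2)(1+|v|^2)$, this yields $\langle T|v|\rangle\le\langle T\rangle\langle v\rangle$. Combining the two steps,
\begin{align*}
\langle x-\chi(\varepsilon v)vt\rangle\le \sqrt2\,\langle T\rangle\langle x\rangle\langle v\rangle\le 2\langle T\rangle\langle x\rangle\langle v\rangle,
\end{align*}
and raising to the $m$-th power and inverting gives the claim.

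None of these steps is a genuine obstacle. The only care needed is to check that the cutoff factor and the range $t\le T$ are used only through $|\chi(\varepsilon v)vt|\le T|v|$. This makes the bound independent of $\varepsilon$, which is what allows it to be used uniformly along the sequence $\{f^n\}$.
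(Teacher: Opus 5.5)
Your proof is correct and is essentially the paper's argument. Both rest on $|y+z|^2\le 2|y|^2+2|z|^2$, on the bound $|\chi(\varepsilon v)vt|\le T|v|$, and on factoring $1+2|x|^2+2T^2|v|^2$ into a product of brackets. You package it as Peetre's inequality followed by $\langle T|v|\rangle\le\langle T\rangle\langle v\rangle$, which gives the slightly sharper constant $\sqrt{2}\,\langle T\rangle$ in place of $2\langle T\rangle$.
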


\begin{proof}
Fix $t\in[0,T]$. Then by the Schwarz inequality,
\begin{align*}
    1+|x-\chi(\varepsilon v)vt|^2 &\le 1+2|x|^2 + 2t^2|v|^2\\
    &\le (1+2|x|^2)(1+2t^2|v|^2).
\end{align*}Thus,
\begin{align*}
    \langle x-\chi(\varepsilon v)vt\rangle^2 \le 4(1+T^2) \langle x\rangle^2\langle v\rangle^2.
\end{align*}Hence, in $0\le t\le T$, we have
\begin{align*}
    \langle x\rangle^{-m}\langle v\rangle^{-m} \le (2\langle T\rangle)^{m}\langle x-\chi(\varepsilon v)vt\rangle^{-m}.
\end{align*}
\end{proof}

We now introduce a crucial lemma concerning the comparison principle.  
For each fixed $g$, we compare the solution $f$ of the $\varepsilon$–approximated linear problem \eqref{semi problem} with an auxiliary function $\overline{f}_{\varepsilon}$ depending on $\varepsilon>0$.  
This comparison directly yields the same upper and lower bounds for each iterated element $f^{n}$ solving \eqref{n approx}.

\begin{lemma}[Function-wise upper and lower bound]\label{local comparison} Let $g \in Y_{T,\alpha,p_0,q_0,w}$ satisfy $g\ge0$ and
\begin{align*}
 g(t,x,v) \le C_g\langle x-\chi(\varepsilon' v)vt \rangle^{-m}\langle v\rangle^{-m},
\end{align*}for some $C_g\ge 0$ and $0<\varepsilon'\le 1$. Let $f\in Y_{T,\alpha,p_0,q_0,w}$ be a solution to the linear problem \eqref{semi problem} for some $0<\varepsilon\le \varepsilon'$ with the approximated initial data $f^\textup{in}_\varepsilon\in C^\infty_c(\mathbb{R}^6)$ (see the definition \eqref{approx initial}) satisfying $f^\textup{in}_\varepsilon\ge0$. Then, there exists a constant $M=M(m)>0$ such that
\begin{align*}
 f^\textup{in}_\varepsilon I^\varepsilon\le f(t,x,v) \le e^{M(T+1)^3(C_g+1)}\lVert f^\textup{in}_\varepsilon\rVert_{\mathcal{C}^2_{m,0}}(2\langle T\rangle)^m\langle x -\chi(\varepsilon v)vt\rangle^{-m} \langle v\rangle^{-m},
\end{align*} for $(t,x,v)\in\overline{\mathcal{Q}_T}$ where $I^\varepsilon$ is defined as in \eqref{reg interval}.
\end{lemma}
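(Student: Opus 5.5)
The plan is to apply the weak parabolic maximum principle twice to the linear operator $\mathcal{L}_{\varepsilon,g}$: once to a shifted function for the lower bound, and once against an explicit weighted supersolution for the upper bound. Throughout, the zero-order term $-gf$ has the unfavorable sign. I will handle it by the usual exponential renormalization: for a bounded $C^{2,\alpha}_\textup{para}$ function $u$, pass to $e^{-\lambda t}u$ with $\lambda>\sup g$. Here $\sup g\le C_g$ by \eqref{bound g}.

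After this renormalization, every coefficient of $\mathcal{L}_{\varepsilon,g}$ is bounded by Lemma \ref{L^infty bound of coefficients}, and the second-order part $\varepsilon\Delta_{x,v}+(-\Delta_v)^{-1}_\varepsilon g\,\Delta_v$ is uniformly elliptic. So the classical weak maximum principle for bounded sub/supersolutions on $(0,T)\times\mathbb{R}^6$ applies to functions in $Y_{T,\alpha,p_0,q_0,w}$. If needed, I will justify it on the unbounded domain by a Phragm\'en--Lindel\"of perturbation $\delta e^{Ct}(1+|x|^2+|v|^2)$ with $\delta\to0$.

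\textbf{Lower bound.} I will write $f=f^\textup{in}_\varepsilon I^\varepsilon+h$. Since $\mathcal{L}_{\varepsilon,g}(f^\textup{in}_\varepsilon I^\varepsilon)=\mathcal{L}_{\varepsilon,g}(f^\textup{in}_\varepsilon)I^\varepsilon+f^\textup{in}_\varepsilon (I^\varepsilon)'$, the equation \eqref{semi problem} gives
\begin{align*}
\mathcal{L}_{\varepsilon,g}(h)=-f^\textup{in}_\varepsilon\,(I^\varepsilon)'(t).
\end{align*}
For $t\ge0$, $I^\varepsilon$ is constant on its plateau and then nonincreasing, so the right-hand side is $\ge0$. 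Moreover $h=0$ on $[0,\varepsilon)$ by \eqref{f equals fin}. The maximum principle (after the $e^{-\lambda t}$ renormalization) then gives $h\ge0$, that is, $f\ge f^\textup{in}_\varepsilon I^\varepsilon$.

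\textbf{Upper bound.} Let $W\eqdef\langle x-\chi(\varepsilon v)vt\rangle^{-m}\langle v\rangle^{-m}$, and take the candidate supersolution
\begin{align*}
\overline{f}_\varepsilon\eqdef \|f^\textup{in}_\varepsilon\|_{\mathcal{C}^2_{m,0}}(2\langle T\rangle)^m e^{A(t+1)}W,
\end{align*}
with $A=M(T+1)^2(C_g+1)$. Since $(t+1)\le T+1$ on $[0,T]$, this gives the claimed factor $e^{M(T+1)^3(C_g+1)}$.

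\emph{Transport part.} The key point is that $(\partial_t+\chi(\varepsilon v)v\cdot\nabla_x)W=0$ exactly, because the weight is constant along the truncated free transport.

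\emph{Remaining terms of $\mathcal{L}_{\varepsilon,g}$.} The terms $\varepsilon\Delta_{x,v}W$, $J^\varepsilon\chi(\varepsilon x)E_g\cdot\nabla_vW$, $(-\Delta_v)^{-1}_\varepsilon g\,\Delta_vW$ and $gW$ must each be bounded by $C(m)(T+1)^2(C_g+1)W$.
\begin{itemize}
\item The $v$-derivatives of $x-\chi(\varepsilon v)vt$ produce factors $t(\chi(\varepsilon v)+\varepsilon|v||\nabla\chi(\varepsilon v)|)\lesssim t$, since $\varepsilon|v|\le1$ on $\textup{supp}\,\chi(\varepsilon\cdot)$. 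Second derivatives produce factors $t^2$ and $t\varepsilon$.
\item Derivatives of $\langle\cdot\rangle^{-m}$ cost only $m\langle\cdot\rangle^{-1}\le m$.
\item By the computation in Lemma \ref{L^infty bound of coefficients} with \eqref{int decay}, $|E_g|$ and $(-\Delta_v)^{-1}_\varepsilon g$ are bounded by $C(m)C_g$, uniformly in $\varepsilon'$. This is why the mismatch between $\varepsilon$ and $\varepsilon'$ is harmless.
\end{itemize}
Hence $\mathcal{L}_{\varepsilon,g}\overline{f}_\varepsilon\ge (A-C(m)(T+1)^2(C_g+1))\overline{f}_\varepsilon$.

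\emph{Source term.} The term $\mathcal{L}_{\varepsilon,g}(f^\textup{in}_\varepsilon)I^\varepsilon$ involves only $\Delta_{x,v}f^\textup{in}_\varepsilon$, $v\cdot\nabla_xf^\textup{in}_\varepsilon$, $\nabla_vf^\textup{in}_\varepsilon$, $\Delta_vf^\textup{in}_\varepsilon$ and $f^\textup{in}_\varepsilon$, all controlled by $\|f^\textup{in}_\varepsilon\|_{\mathcal{C}^2_{m,0}}\langle x\rangle^{-m}\langle v\rangle^{-2m}$. The coefficient $\chi(\varepsilon v)$ multiplying $v\cdot\nabla_x$ is at most $1$. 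By Lemma \ref{embed}, this is $\le C(C_g+1)\|f^\textup{in}_\varepsilon\|_{\mathcal{C}^2_{m,0}}(2\langle T\rangle)^mW$. Taking $M$ large therefore yields $\mathcal{L}_{\varepsilon,g}(\overline{f}_\varepsilon-f)\ge0$.

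\emph{Initial data.} On $[0,\varepsilon)$ we have $f=f^\textup{in}_\varepsilon\le\|f^\textup{in}_\varepsilon\|\langle x\rangle^{-m}\langle v\rangle^{-2m}$, which is $\le\overline{f}_\varepsilon$ again by Lemma \ref{embed} (using $\langle v\rangle^{-2m}\le\langle v\rangle^{-m}$). The maximum principle then gives $f\le\overline{f}_\varepsilon$.

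The main obstacle is the bookkeeping in $\Delta_vW$ and $\nabla_vW$. One has to check that every term produced by differentiating $\chi(\varepsilon v)vt$ is genuinely bounded by a polynomial of degree $\le2$ in $(T+1)$ times $W$ itself, with no loss of decay in $\langle x-\chi(\varepsilon v)vt\rangle$. Once this is in place, the remaining steps are routine: the transport identity, the uniform bounds on $E_g$ and $(-\Delta_v)^{-1}_\varepsilon g$, and the choice of $M$ absorbing both the zero-order errors and the source term.
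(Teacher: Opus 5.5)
Your proposal is correct and follows essentially the same route as the paper: the lower bound comes from $\mathcal{L}_{\varepsilon,g}(f^\textup{in}_\varepsilon I^\varepsilon)=\mathcal{L}_{\varepsilon,g}(f^\textup{in}_\varepsilon)I^\varepsilon+f^\textup{in}_\varepsilon\partial_tI^\varepsilon$ with $\partial_tI^\varepsilon\le0$ for $t\ge0$. The upper bound comes from the same supersolution built on $\langle x-\chi(\varepsilon v)vt\rangle^{-m}\langle v\rangle^{-m}$, whose truncated transport derivative vanishes while every other term costs at most $C(m)(T+1)^2(C_g+1)$. The differences are cosmetic (your factor $e^{A(t+1)}$ versus the paper's $e^{C'(t-\varepsilon)}$, and comparing from $t=0$ rather than splitting at $t=\varepsilon$), except that you explicitly justify the comparison principle, which the paper simply invokes: you handle the unfavorable sign of $-gf$ by the $e^{-\lambda t}$ renormalization and the unbounded domain by a Phragm\'en--Lindel\"of barrier.
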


\begin{proof}
\textbf{Upper bound on $t\in[0,\varepsilon]$.} In this interval, we have $f(t)=f^\textup{in}_\varepsilon$ by \eqref{f equals fin}. Then by Lemma \ref{embed}, we have 
\begin{align*}
|f(t,x,v)| \le \langle x\rangle^{-m} \langle v\rangle ^{-2m} |\langle x\rangle^m \langle v\rangle ^{2m} f^\textup{in}_\varepsilon|\le \lVert f^\textup{in}_\varepsilon\rVert_{\mathcal{C}^2_{m,0}}(2\langle T\rangle)^m\langle x-\chi(\varepsilon v)vt\rangle^{-m}\langle v\rangle^{-m}.
\end{align*}This proves the desired upper bound with $M=0$.

\noindent\textbf{Upper bound on }$t\in[\varepsilon,T]$\textbf{.} We define the auxiliary function
\begin{align*}
\overline{f}_\varepsilon(t,x,v) = e^{C'(t-\varepsilon)}(2\langle T\rangle)^m\lVert f^\textup{in}_\varepsilon\rVert_{\mathcal{C}^2_{m,0}(\mathbb{R}^6)} \langle x - \chi(\varepsilon v)vt\rangle^{-m}\langle v\rangle^{-m},
\end{align*}with a sufficiently large $C'>0$ which will be determined as in \eqref{final C'}. Note that by Lemma \ref{embed} again, we have $\overline{f}_\varepsilon(\varepsilon,x,v)\ge f^\textup{in}_\varepsilon(x,v)= f(\varepsilon, x,v)$. Therefore, by the comparison principle, to establish $f\le \overline{f}_\varepsilon$, it suffices to verify the following:
\begin{align}\label{4.2 desired ineq}
\mathcal{L}_{\varepsilon,g}(\overline{f}_\varepsilon) \ge \mathcal{L}_{\varepsilon,g}(f)=\mathcal{L}_{\varepsilon,g}(f^\textup{in}_\varepsilon)I^\varepsilon  \quad \textup{ in }\,\, (\varepsilon, T)\times\mathbb{R}^6,
\end{align}where $\mathcal{L}_{\varepsilon,g}$ denotes the linear differential operator considered in \eqref{semi}.

To this end, we proceed to compute and estimate each term in $\mathcal{L}_{\varepsilon,g}(\overline{f}_\varepsilon)$. 

\noindent \textit{(I) Transport term:} Note that
\begin{align}\label{Trans est}
(\partial_t + \chi(\varepsilon v)v\cdot\nabla_x)\overline{f}_\varepsilon = C'\overline{f}_\varepsilon,
\end{align}since $(\partial_t + \chi(\varepsilon v) v\cdot\nabla_x)\langle x-\chi(\varepsilon v)vt\rangle^{-m} \equiv 0$.

\noindent \textit{(II) Acceleration term:} To estimate the acceleration term $J^\varepsilon(t)\chi(\varepsilon x)E_g\cdot\nabla_v\overline{f}_\varepsilon$, we begin with computing the velocity gradient 
\begin{align*}
\nabla_v\langle x-\chi(\varepsilon v)vt\rangle^{-m}.
\end{align*}Using the chain rule, this term can be written as
\begin{multline*}
\nabla_v\langle \chi(\varepsilon v)vt - x\rangle^{-m} = -\frac{m}{2} \langle \chi(\varepsilon v)vt - x\rangle^{-m-2} \nabla_v|\chi(\varepsilon v)vt - x|^2\\
= -m t\langle \chi(\varepsilon v)vt - x\rangle^{-m-2}\left[\chi(\varepsilon v)(\chi(\varepsilon v)vt - x) + \varepsilon(\nabla_v\chi)(\varepsilon v)\left\{v\cdot(\chi(\varepsilon v)vt - x)\right\}\right].
\end{multline*}Now apply the bounds, $0\le \chi\le1$, $|\nabla_v\chi|<\infty$ and $|v|\le\langle v\rangle$, we obtain the estimate:
\begin{align}\label{v grad}
|\nabla_v\langle x-\chi(\varepsilon v)vt\rangle^{-m}| \le C(m,\chi)T\langle x-\chi(\varepsilon v)vt\rangle^{-m}.
\end{align}
To estimate the electric field $E_g$, we use the bound:
\begin{align*}
|E_g(t,x)| \le C_g\sup_{(t,x)\in[0,T]\times\mathbb{R}^3} \int_{\mathbb{R}^3} \frac{dv}{\langle v\rangle^m} \int_{\mathbb{R}^3} \frac{dy}{|x-\chi(\varepsilon v)vt-y|^2\langle y \rangle^m}.
\end{align*}By using \eqref{int decay}, we conclude 
\begin{align}\label{E bound}
\lVert E_g\rVert_{L^\infty([0,T]\times\mathbb{R}^3)} \lesssim_m C_g.
\end{align}
Therefore, we have
\begin{align}
|J^\varepsilon(t)\chi(\varepsilon x)E_g \cdot\nabla_v\overline{f}_\varepsilon| &\le \lVert E_g\rVert_{L^\infty_{t,x}} |\nabla_v\overline{f}_\varepsilon| \le C(m,\chi)\lVert E_g\rVert_{L^\infty_{t,x}} (T+1)\overline{f}_\varepsilon\nonumber\\
&\le C_1(m,\chi)(T+1)C_g\overline{f}_\varepsilon.\label{Acc est}
\end{align}Here, the constant $C_1$ may be redefined, if necessary, to absorb numerical factors.

\noindent \textit{(III) Nonlocal collision term:} To estimate the term $(-\Delta_v)^{-1}g\Delta_v\overline{f}_\varepsilon$, we first compute the velocity Laplacian of the weight function:
\begin{align*}
\Delta_v\langle x-\chi(\varepsilon v)vt\rangle^{-m}.
\end{align*}By direct computation, this can be written as:
\begin{multline*}
\Delta_v\langle x-\chi(\varepsilon v)vt\rangle^{-m} 
= -mt\left[\nabla_v\langle x-\chi(\varepsilon v)vt\rangle^{-m-2}\right]\cdot [\chi(\varepsilon v)(\chi(\varepsilon v)vt - x) \\
+\varepsilon(\nabla_v\chi)(\varepsilon v) \left\{v\cdot(\chi(\varepsilon v)vt -x)\right\} ] \\
- mt \langle \chi(\varepsilon v)vt - x\rangle^{-m-2}\nabla_v\cdot\left[\chi(\varepsilon v)(\chi(\varepsilon v)vt - x) + \varepsilon(\nabla_v\chi)(\varepsilon v) \left\{v\cdot(\chi(\varepsilon v)vt -x)\right\}\right]\\
=: \Delta_1 + \Delta_2.
\end{multline*}We first estimate $\Delta_1$. Noting that
\begin{multline*}
\nabla_v\langle x-\chi(\varepsilon v)vt\rangle^{-m-2} = -(m+2)t \langle \chi(\varepsilon v) vt - x\rangle^{-m-4}[\chi(\varepsilon v)(\chi(\varepsilon v)vt - x) \\
+ \varepsilon(\nabla_v\chi)(\varepsilon v)\left\{v\cdot(\chi(\varepsilon v)vt - x)\right\}],
\end{multline*}we deduce the bound
\begin{align}\label{Delta 1}
|\nabla_v\langle x-\chi(\varepsilon v)vt\rangle^{-m-2}| \le (1+\|\nabla_v\chi\|_{L^\infty})(m+2)T\langle \chi(\varepsilon v)vt - x\rangle^{-m-3}.
\end{align}Next, we simplify and estimate $\Delta_2$. For the first divergence component:
\begin{multline*}
\nabla_v\cdot\left[\chi(\varepsilon v)(\chi(\varepsilon v)vt - x)\right]
=\nabla_v\left[\chi(\varepsilon v)\right]\cdot(\chi(\varepsilon v)vt - x) + \chi(\varepsilon v)\nabla_v\cdot(\chi(\varepsilon v) vt - x)\\
= \varepsilon(\nabla_v\chi)(\varepsilon v)\cdot (\chi(\varepsilon v)vt - x) + \chi(\varepsilon v)\left[3t\chi(\varepsilon v) + v\cdot\varepsilon (\nabla_v\chi)(\varepsilon v)t\right],
\end{multline*}yielding the bound
\begin{align}\label{Delta 2-1}
|\nabla_v\cdot[\chi(\varepsilon v)(\chi(\varepsilon v)vt - x)]| \le \|\nabla_v\chi\|_{L^\infty}|\chi(\varepsilon v)vt - x| + (3+\|\nabla_v\chi\|_{L^\infty})T.
\end{align}For the second divergence component, we compute:
\begin{multline*}
\nabla_v\cdot \left[\varepsilon (\nabla_v\chi)(\varepsilon v)\left\{v\cdot(\chi(\varepsilon v)vt - x)\right\}\right]\\
=\varepsilon^2(\Delta_v\chi)(\varepsilon v)\left\{v\cdot(\chi(\varepsilon v)vt - x)\right\} + \varepsilon (\nabla_v\chi)(\varepsilon v)\cdot\nabla_v\left\{v\cdot(\chi(\varepsilon v)vt - x) \right\}\\
=\varepsilon^2(\Delta_v\chi)(\varepsilon v)\left\{v\cdot(\chi(\varepsilon v)vt - x)\right\} 
\\
+ \varepsilon(\nabla_v\chi)(\varepsilon v)\cdot\left[3(\chi(\varepsilon v)vt - x) + tv(\varepsilon(\nabla_v\chi)(\varepsilon v)\cdot v) + 3tv\chi(\varepsilon v)\right].
\end{multline*}Then using $0<\varepsilon\le 1$, we obtain
\begin{multline}
    |\nabla_v\cdot \left[\varepsilon (\nabla_v\chi)(\varepsilon v)\left\{v\cdot(\chi(\varepsilon v)vt - x)\right\}\right]| \\
    \le\|\Delta_v\chi\|_{L^\infty}|\chi(\varepsilon v)vt - x| + 3\|\nabla_v\chi\|_{L^\infty}|\chi(\varepsilon v) vt - x| + \|\nabla_v\chi\|_{L^\infty}^2T + 3\|\nabla_v\chi\|_{L^\infty}T\\
= (\|\Delta_v\chi\|_{L^\infty}+ 3\|\nabla_v\chi\|_{L^\infty})|\chi(\varepsilon v) vt-x| + (\|\nabla_v\chi\|_{L^\infty}^2 + 3\|\nabla_v\chi\|_{L^\infty})T.\label{Delta 2-2}
\end{multline}Combining these estimates \eqref{Delta 1}, \eqref{Delta 2-1} and \eqref{Delta 2-2}, we estimate the full Laplacian:
\begin{multline}
|\Delta_v\langle \chi(\varepsilon v) vt - x\rangle^{-m}|\le |\Delta_1| + |\Delta_2| \\
\le mT  (1+\|\nabla_v\chi\|_{L^\infty})^2(m+2)T\langle \chi(\varepsilon v)vt - x\rangle^{-m-3} |\chi(\varepsilon v)vt - x|\\
+ mT \langle \chi(\varepsilon v)vt - x\rangle^{-m-2} (\|\nabla_v\chi\|_{L^\infty}|\chi(\varepsilon v)vt - x| + (3+\|\nabla_v\chi\|_{L^\infty})T \\+ (\|\Delta_v\chi\|_{L^\infty}+ 3\|\nabla_v\chi\|_{L^\infty})|\chi(\varepsilon v)vt - x| + (\|\nabla_v\chi\|_{L^\infty}^2 + 3\|\nabla_v\chi\|_{L^\infty})T)\\
\le C(m,\chi)(T+1)^2 \langle \chi(\varepsilon v)vt - x\rangle^{-m}.\label{Delta}
\end{multline}
To complete the estimate, we compute the Laplacian of the full weight function:
\begin{multline*}
\Delta_v\left(\langle x-\chi(\varepsilon v)vt\rangle^{-m} \langle v\rangle^{-m}\right) = \Delta_v\left(\langle x-\chi(\varepsilon v)vt\rangle^{-m}\right)\langle v\rangle^{-m} \\ +2\nabla_v\langle x-\chi(\varepsilon v)vt\rangle^{-m}\cdot\nabla_v\langle v\rangle^{-m} +
\langle x-\chi(\varepsilon v)vt\rangle^{-m}\Delta_v\left(\langle v\rangle^{-m}\right).
\end{multline*}Applying the previous bounds, \eqref{v grad} and \eqref{Delta}, we obtain
\begin{align}\label{total Delta}
 \Delta_v\left(\langle x-\chi(\varepsilon v)vt\rangle^{-m} \langle v\rangle^{-m}\right)\le C(m,\chi)(T+1)^2\langle x-\chi(\varepsilon v)vt\rangle^{-m}\langle v\rangle^{-m}.
\end{align}

Altogether, we obtain the following upper bound for the localized diffusion term:
\begin{align*}
|(-\Delta_v)_\varepsilon^{-1}g\Delta_v\overline{f}_\varepsilon| \le C_2(m,\chi)(T+1)^2\lVert (-\Delta_v)_\varepsilon^{-1}g\rVert_{L^\infty_{t,x,v}} \overline{f}_\varepsilon.
\end{align*}Moreover, since
\begin{align*}
(-\Delta_v)_\varepsilon^{-1}g \le C_g \int_{\mathbb{R}^3} \frac{du}{|u-v|\langle u\rangle^m} \le C_2(m)C_g,
\end{align*}we conclude that
\begin{align}\label{Coll1 est}
|(-\Delta_v)_\varepsilon^{-1}g\Delta_v\overline{f}_\varepsilon| \le C_2(m,\chi)(T+1)^2C_g\overline{f}_\varepsilon,
\end{align}where $C_2(m,\chi)$ may be redefined to absorb all multiplicative factors, if necessary.

\noindent \textit{(IV) Additional collision term: }The remaining contribution from the collision operator is bounded from above in a straightforward manner. Specifically, we have
\begin{align}\label{Coll2 est}
g\overline{f}_\varepsilon \le C_g \overline{f}_\varepsilon.
\end{align}

\noindent \textit{(V) Artificial diffusion term:} To estimate the diffusion term $\Delta_{x,v}\overline{f}_\varepsilon$, we compute the spacial Laplacian of the weight function $\langle x-\chi(\varepsilon v)vt\rangle^{-m}$. A direct calculation yields that
\begin{multline*}
\Delta_x\langle x-\chi(\varepsilon v)vt\rangle^{-m} = \nabla_x\cdot[-mx\langle x-\chi(\varepsilon v)vt\rangle^{-(m+2)}]\\
= -3m\langle x-\chi(\varepsilon v)vt\rangle^{-(m+2)} +m(m+2)\langle x-\chi(\varepsilon v)vt\rangle^{-(m+4)}(x-\chi(\varepsilon v)vt),
\end{multline*}which implies the bound:
\begin{align*}
\Delta_x\langle x-\chi(\varepsilon v)vt\rangle^{-m} \le m(m+5)\langle x-\chi(\varepsilon v)vt\rangle^{-m}.
\end{align*}Combining this with  \eqref{total Delta}, we obtain
\begin{align}\label{Artifical est}
\Delta_{x,v}\overline{f}_\varepsilon \le C_2(m,\chi)(T+1)^2\overline{f}_\varepsilon,
\end{align}where the constant $C_2$ may be redefined to absorb numerical factors, if necessary. 

\noindent \textit{(VI) Artifical inhomogeneous term associated with initial data:} Since $f^\textup{in}_\varepsilon\in \mathcal{C}^2_{m,0}$, Lemma \ref{embed} yields the following estimate:
\begin{multline}\label{Initial est}
|\mathcal{L}_{\varepsilon,g}(f^\textup{in}_\varepsilon)| \le
\left(\lVert v\cdot\nabla_xf^\textup{in}_\varepsilon\rVert_{L^\infty_{m,2m}} + (C_g+1)(C_1(m) \lVert \nabla_vf^\textup{in}_\varepsilon\rVert_{L^\infty_{m,2m}}\right. \\
\left.+ C_2(m) \lVert \Delta_{v} f^\textup{in}_\varepsilon\rVert_{L^\infty_{m,2m}} + \lVert f^\textup{in}_\varepsilon\rVert_{L^\infty_{m,2m}})\right) (\langle x\rangle^{-m}\langle v\rangle^{-m})\langle v\rangle^{-m}\\
\le C(m,\chi)\lVert f^\textup{in}_\varepsilon\rVert_{\mathcal{C}^2_{m,0}}\left(C_g+1\right) (2\langle T\rangle)^m \langle x-\chi(\varepsilon v)vt\rangle^{-m}\langle v\rangle^{-m},
\end{multline}for some $C(m,\chi)>0.$

Combining \eqref{Trans est}, \eqref{Acc est}, \eqref{Coll1 est}, \eqref{Coll2 est}, \eqref{Artifical est} and \eqref{Initial est}, we assign a sufficiently large constant 
\begin{align}\label{final C'}
C' \eqdef M(m,\chi)(T+1)^2(C_g+1),
\end{align} such that the \textit{transport term} in \textit{(I)} absorbs all the rest of the terms from \textit{(II)} to \textit{(VI)} on the left-hand side of the desired inequality \eqref{4.2 desired ineq}. This allows us to conclude that
\begin{align*}
\mathcal{L}_{\varepsilon,g}(\overline{f}_\varepsilon) \ge \mathcal{L}_{\varepsilon,g}(f^\textup{in}_\varepsilon)I^\varepsilon \quad \textup{ in }\,\, \mathcal{Q}_T.
\end{align*}
By merging the upper bounds over two distinct time intervals, the desired estimate is achieved.

\textbf{Lower bound in $t\in[0,\varepsilon]$.} Since $f=f^\textup{in}_\varepsilon$ in this time interval, the desired lower bound $f\ge f^\textup{in}_\varepsilon I^\varepsilon$ trivially holds.

\textbf{Lower bound in $t\in[\varepsilon, T]$.} By comparison principle, it suffices to show 
\begin{align*}
\mathcal{L}_{\varepsilon,g}(f^\textup{in}_\varepsilon I^\varepsilon) \le \mathcal{L}_{\varepsilon,g}(f)=\mathcal{L}_{\varepsilon,g}(f^\textup{in}_\varepsilon)I^\varepsilon \quad \textup{ in }\,\, [\varepsilon, T]\times\mathbb{R}^6.
\end{align*}This inequality holds, since $$\mathcal{L}_{\varepsilon,g}(f^\textup{in}_\varepsilon I^\varepsilon) =\mathcal{L}_{\varepsilon,g}(f^\textup{in}_\varepsilon)I^\varepsilon+ f^\textup{in}_\varepsilon\partial_t I^\varepsilon, $$ and, by definition of $I^\varepsilon(t)$ in \eqref{reg interval}, we have $\partial_tI^\varepsilon \le0$ for each $t\in[\varepsilon,T]$ and  $\varepsilon>0$. Therefore, we conclude that
\begin{align*}
f^\textup{in}_\varepsilon I^\varepsilon \le f \quad \textup{in}\,\, t\ge\varepsilon.
\end{align*}

By combining the lower bounds on the two distinct time intervals, we obtain the desired estimate.

In the above proof, the constant $M$ was written as if it depends on $\chi$. More precisely, increasing $\|\chi\|_{C^{2}}$ increases the corresponding value $M(\chi)$. However, this dependence on $\chi$ can in fact be removed. We may define
$$
    M(m) \coloneqq \inf\{\, M(m,\chi) : \chi\in C_c^2(\mathbb{R}^3) \textup{ and satisfies }\eqref{def cutoff}\,\},
$$
which yields a choice of $M$ independent of $\chi$. The infimum $M(m)>0$ exists for each fixed $m$. This completes the proof.
\end{proof}

Since our initial sequence element $f^0\equiv0$ for the approximated system \eqref{n approx} is non-negative and bounded from above by a constant multiple of $\langle x - \chi(v)vt\rangle^{-m}\langle v\rangle^{-m}$, we can apply Lemma \ref{local comparison} inductively on each element of the iterated solutions $\{f^n\}_{n\ge 0}$. Then we first obtain that $f^1$ satisfies
\begin{align}\label{C_1}
f^\textup{in}_1 I^{1}\le f^1 \le C_{0} \left\langle x - \chi\left(v\right)vt\right\rangle^{-m} \langle v\rangle^{-m},
\end{align}for some $C_0>0$. Then by choosing $\varepsilon'=\frac{1}{n-1}$ and $\varepsilon=\frac{1}{n}$ on each step for $n\ge 2$, we have 
\begin{align}\label{C_n}
f^\textup{in}_{n^{-1}} I^{\frac{1}{n}}\le f^n \le C_{n-1} \left\langle x - \chi\left(\frac{v}{n}\right)vt\right\rangle^{-m} \langle v\rangle^{-m},
\end{align}where $C_{n-1}$ depends only on $f^{n-1}, T, f^\textup{in}_{n^{-1}}$ and $m$. In Lemma \ref{uniform bound}, we will choose $f^\textup{in}_{n^{-1}}\in \mathcal{C}^2_{m,0}$ sufficiently small so that the sequence $\{C_n\}_{n\in\mathbb{N}\cup \{0\}}$ is uniformly bounded.

\begin{lemma}[Uniform upper bound]\label{uniform bound} Fix $m>3$. Let $M=M(m)$ be as introduced in Lemma \ref{local comparison}. Assume that the initial datum $f^\textup{in}\in \mathcal{C}^2_{m,0}(\mathbb{R}^6)$ satisfies $f^\textup{in}\ge 0$ and
\begin{align}
    \|f^\textup{in}\|_{\mathcal{C}^2_{m,0}} \le \frac{1}{e^{-e}M(2\langle T\rangle)^m(T+1)^3e^{M(T+1)^3}}. \label{init bd}
\end{align}Then each sequence element $f^n$ satisfies
\begin{align*}
f^\textup{in}_{n^{-1}}I^{\frac{1}{n}}\le f^n \le C \left\langle x - \chi\left(\frac{v}{n}\right)vt\right\rangle^{-m} \langle v\rangle^{-m} \quad \textup{ in } \,\, \mathcal{Q}_T,
\end{align*} for some $C=C(f^\textup{in},m)$ given by
$$
    C = \frac{c_0}{M^2  \,\|f^\textup{in}\|_{\mathcal{C}^2_{m,0}} },
$$where $c_0>0$ is an absolute constant. In particular, this lemma specifies the time interval $[0,T]$ associated with the local existence result stated in Theorem~\ref{thm.main1}.
\end{lemma}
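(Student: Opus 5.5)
We argue by induction on $n$, applying Lemma \ref{local comparison} at each step with $g=f^{n-1}$, $\varepsilon=\frac1n$, $\varepsilon'=\frac{1}{n-1}$. The lower bound $f^\textup{in}_{n^{-1}}I^{\frac1n}\le f^n$ is given directly by that lemma, and it also yields $f^n\ge0$, so the hypothesis $g\ge0$ persists along the iteration. The only task is to show that the constants $C_n$ in \eqref{C_n} stay below a fixed $C$.

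Lemma \ref{local comparison} gives the recursion
\begin{align*}
C_n \le e^{M(T+1)^3(C_{n-1}+1)}\,\|f^\textup{in}_{n^{-1}}\|_{\mathcal{C}^2_{m,0}}\,(2\langle T\rangle)^m .
\end{align*}
Since $f^\textup{in}_{n^{-1}}$ is $f^\textup{in}$ multiplied by a cutoff, the weighted norms satisfy $\|f^\textup{in}_{n^{-1}}\|_{\mathcal{C}^2_{m,0}}\le 2\|f^\textup{in}\|_{\mathcal{C}^2_{m,0}}$ for large $n$, with the cutoff constant absorbed into $M$. Write $a\eqdef M(T+1)^3$ and $b\eqdef (2\langle T\rangle)^m\|f^\textup{in}\|_{\mathcal{C}^2_{m,0}}$. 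The recursion becomes $C_n\le b\,e^{a}e^{aC_{n-1}}$, and it suffices to find $C$ with $b\,e^{a}e^{aC}\le C$; then $C_{n-1}\le C$ implies $C_n\le C$. The base case $C_0=0$ (from $f^0\equiv0$) is trivially below $C$.

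The main step is this fixed-point inequality. Set $y\eqdef aC$. The condition becomes $ab\,e^{a}\le y e^{-y}$. Choose $y$ of order one, say $y=1$. Then $ye^{-y}=e^{-1}$, and we need $ab\,e^a\lesssim 1$. Hypothesis \eqref{init bd} gives exactly $ab\,e^{a}\le e^{-e}$, which is at most $e^{-1}$. This is where the seemingly odd constant $e^{-e}$ enters: the iterated exponential is controlled once the product is below such a threshold.

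The resulting constant is $C=1/a=1/(M(T+1)^3)$. To express it as in the statement, use \eqref{init bd} read as an equality defining $T$ (compare Remark \ref{thm.main1 maximal existence time}). Then $(T+1)^3\approx M^{-1}\log(1/(\ldots\|f^\textup{in}\|))$, and the factor $(2\langle T\rangle)^m e^{a}$ can be traded for $1/(M\|f^\textup{in}\|)$. This gives $C\lesssim c_0/(M^2\|f^\textup{in}\|_{\mathcal{C}^2_{m,0}})$ after bounding $1/a$ by a multiple of $e^{a}(2\langle T\rangle)^m/M$.

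I expect the delicate point to be this last bookkeeping. Passing from $1/(M(T+1)^3)$ to $c_0/(M^2\|f^\textup{in}\|)$ requires using \eqref{init bd} in the direction $\|f^\textup{in}\|\le\ldots$, which gives $1/\|f^\textup{in}\|\ge e^{-e}M(2\langle T\rangle)^m(T+1)^3e^{a}\ge e^{-e}M$. The target bound is therefore an upper bound consistent with the smallness, though not necessarily sharp.
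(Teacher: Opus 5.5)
Your proof is correct, reading \eqref{init bd} as intended (see the first remark below), and it closes the induction by a different and more elementary route than the paper.

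\textbf{Comparison with the paper.} The paper encodes the recursion as $\|f^k\|_k\le\Phi^k(0)$ with $\Phi(s)=\|f^{\textup{in}}\|e^{M(T+1)^3(s+1)}$. It uses Lemma \ref{Phi property} to get two fixed points $N_1<N_2$ and boundedness of the orbit of $0$. It then bounds the larger fixed point $N_2$ via Lambert-$W$ asymptotics to reach $c_0/(M^2\|f^{\textup{in}}\|_{\mathcal{C}^2_{m,0}})$.

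You instead exhibit an explicit invariant level. Taking $aC=1$, the maximiser of $ye^{-y}$, gives $\Phi(C)\le C$, so by monotonicity $[0,C]$ is invariant and the induction closes at once. This uses exactly the same threshold $abe^{a}\le e^{-1}$, which is precisely the condition for $\Phi$ to have a fixed point, so nothing is lost. It avoids Lemma \ref{Phi property} and the Lambert-$W$ estimate entirely.

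It also gives $C=1/(M(T+1)^3)$, which is independent of $f^{\textup{in}}$ and sharper than the stated constant. The stated constant grows as $\|f^{\textup{in}}\|\to0$ only because the paper bounds the orbit by $N_2$, and then bounds $N_2$ crudely. In fact the orbit of $0$ stays below $N_1\le 1/a$.

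Your final bookkeeping paragraph can drop the heuristics (treating \eqref{init bd} as an equality, $(T+1)^3\approx\cdots$). Since $(2\langle T\rangle)^m(T+1)^3e^{M(T+1)^3}\ge 1$, the smallness hypothesis gives $M\|f^{\textup{in}}\|_{\mathcal{C}^2_{m,0}}\le c$ for an absolute $c$. Hence $1/(M(T+1)^3)\le 1/M\le c/(M^2\|f^{\textup{in}}\|_{\mathcal{C}^2_{m,0}})$.

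\textbf{Two minor points.}

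First, as printed, \eqref{init bd} reads $\|f^{\textup{in}}\|_{\mathcal{C}^2_{m,0}}\le e^{e}/\bigl(M(2\langle T\rangle)^m(T+1)^3e^{M(T+1)^3}\bigr)$. That is $abe^{a}\le e^{e}$, not $e^{-e}$, and it would not suffice for your key step. The paper's proof has the same issue: it asserts the hypothesis gives $\le e^{-1}$. So both you and the paper are using the evidently intended $e^{-e}/(\cdots)$. To be consistent with that reading, your last paragraph should use $1/\|f^{\textup{in}}\|\ge e^{e}M(\cdots)$ rather than the printed $e^{-e}M(\cdots)$; this only helps.

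Second, the induction starts at $n=1$, so ``for large $n$'' is not enough. You need $\|f^{\textup{in}}_{n^{-1}}\|_{\mathcal{C}^2_{m,0}}\le C_\chi\|f^{\textup{in}}\|_{\mathcal{C}^2_{m,0}}$ for every $n$. The paper glosses this by asserting monotone convergence of these norms. Given such a uniform bound with $C_\chi\ge1$, absorbing $C_\chi$ into $M$ via $C_\chi e^{M(T+1)^3(s+1)}\le e^{(M+\log C_\chi)(T+1)^3(s+1)}$ is fine.
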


\begin{proof}
Define
\begin{align*}
\|f^\textup{in}\| \eqdef \| f^\textup{in}\|_{\mathcal{C}^2_{m,0}}(2\langle T\rangle)^m,
\end{align*}and for each $n\in\mathbb{N}$, define
\begin{align*}
\| f\|_n \eqdef \sup_{(t,x,v)\in \overline{\mathcal{Q}_T}}\langle x-\chi(v/n)vt\rangle^m \langle v\rangle^m|f(t,x,v)|.
\end{align*}Before proceeding, recall the definition of the approximated initial data $f^\textup{in}_{n^{-1}}$ in \eqref{approx initial}, which is obtained by cutting off $f^\textup{in}$. Consequently, $\|f^\textup{in}_{n^{-1}}\|_{\mathcal{C}^2_{m,0}} \uparrow \|f^\textup{in}\|_{\mathcal{C}^2_{m,0}}$ as $n\to\infty$. For $n=0$, we put $f^0\equiv 0$. Also, introduce the function
\begin{align*}
\Phi(s) \eqdef e^{M(T+1)^3(s + 1)}\lVert f^\textup{in}\rVert.
\end{align*}We begin with the base case. We first observe that
\begin{align*}
\lVert f^1\rVert_1 \le \Phi(\lVert f^0\rVert_0)
\end{align*} holds by Lemma \ref{local comparison} with $g=f^0=0$, $C_g=0$, $f=f^1$, $\varepsilon'=\varepsilon=1$.  Recall that $\lVert f^0\rVert_0 =0$. Next, by applying Lemma \ref{local comparison} for $f^2$ with $g=f^1$, $C_g= \|f^1\|_1$, $f=f^2$, $\varepsilon'=1,$ and $\varepsilon=\frac{1}{2}$, we obtain:
\begin{align*}
\lVert f^2 \rVert_2
\le \exp\left[M(T+1)^3(\lVert f^1\rVert_1 + 1)\right]\lVert f^\textup{in}_{\frac{1}{2}}\rVert
 \le \Phi(\lVert f^1\rVert_1)
\le (\Phi\circ \Phi)(0),
\end{align*}since $\Phi$ is increasing. By induction, defining $\Phi^k$ as the $k$-fold composition of $\Phi$, it follows that
\begin{align*}
\lVert f^k \rVert_k \le \Phi^k (0) \quad \forall k\in\mathbb{N}.
\end{align*}To conclude the existence of a uniform-in-$n$ constant $C>0$ in Lemma \ref{uniform bound}, it suffices to show the sequence $\{\Phi^k(0)\}_{k\in\mathbb{N}}$ remains bounded -- i.e., that it admits a finite supremum -- and does not converge to zero. According to Lemma \ref{Phi property}, this can be guaranteed by choosing $T>0$ sufficiently small depending on $\lVert f^\textup{in}\rVert_{\mathcal{C}^2_{m,0}}$ so that the algebraic equation
\begin{align*}
\lVert f^\textup{in}\rVert e^{M(T+1)^3(s+1)} = s,
\end{align*}admits a real root. More precisely, a hypothesis of Lemma \ref{uniform bound} implies
\begin{align*}
    \|f^{\mathrm{in}}\|_{\mathcal{C}^2_{m,0}}
    \,(2\langle T\rangle)^m
    M (T+1)^3 e^{M(T+1)^3}
    \le \frac{1}{e},
\end{align*}
which guarantees the existence of such a root (see Lemma \ref{Phi property} (i), with $c= \|f^{\textup{in}}\|_{\mathcal{C}^2_{m,0}}$ and $A= (2\langle T\rangle)^m
    M (T+1)^3 e^{M(T+1)^3}$ in the notation of Lemma \ref{Phi property}).

Finally, we find a uniform upper bound of the weighted $L^\infty$ norm of $\{f^n\}_{n\ge0}$, that is, $\sup_{k\in\mathbb{N}} \Phi^k(0)$.
By Lemma~\ref{Phi property} (ii), we have
$$
\sup_{t\in[0,T]}\|\langle x-\chi(v/n)vt\rangle^m\langle v\rangle^mf^n(t)\|_{L^\infty(\mathbb{R}^6)}
\le \sup_{n\in\mathbb{N}}\Phi^n(0)
\le s_{\mathrm{large}},
$$
where $s_{\mathrm{large}}>0$ (see Lemma \ref{Phi property} (iii)) denotes the larger real root of the algebraic
equation
$$
\Phi(s) = s,
$$whenever the two real roots exist, and denotes the unique real root in the case that the equation admits only one real solution.
Using the asymptotic behavior of the Lambert $W$-function, the larger real
root of the equation $a e^{b s} = s$ with
$$
a \eqdef \|f^{\mathrm{in}}\|_{\mathcal{C}^2_{m,0}}\, e^{M(T+1)^3}, 
\qquad b \eqdef M(T+1)^3
$$ satisfies the upper bound
$$
s_{\mathrm{large}} \le \frac{c_0}{a b^2},
$$for some absolute constant $c_0>0$. 
Therefore, by choosing $C \eqdef c_0M^{-2}\|f^\textup{in}\|^{-1}_{\mathcal{C}^2_{m,0}}$ and recalling that $a,b>0$, we obtain
the desired local-in-time $L^\infty$ bound.
\end{proof}

\begin{remark}\label{T bd}
    By simplifying \eqref{init bd}, we obtain the following sufficient bound for $T>0$:
\begin{align*}
    T \le 
    \left\{
        \frac{1}{M}
        \log\!\left(
            \frac{1}{
                e^{-e}M
                \|f^{\mathrm{in}}\|_{\mathcal{C}^2_{m,0}}C(m)
            }
        \right)
    \right\}^{\!\frac{1}{3}}
    - 1,
\end{align*}
where $C(m)<\infty$ is an upper bound of the function
\begin{align}\label{C const w.r.t. m}
    \psi(t) = (2\langle t\rangle)^m(t+1)^3 e^{-M(t+1)^3}, \qquad t\ge0
\end{align}for fixed $M=M(m)$, where $M(m)$ is constructed in Lemma \ref{local comparison}.
\end{remark}

We are now ready to prove our main local existence result, Theorem~\ref{thm.main1}.

\begin{proof}[Proof of Theorem \ref{thm.main1}] The proof of Theorem \ref{thm.main1} consists of two steps. 

\textbf{Step 1: Passing to the limit. }By Lemma \ref{uniform bound}, the sequence $\{f^n\}_{n\ge0}$ is uniformly bounded in $L^\infty(\mathcal{Q}_T)$. Hence, by the Banach-Alaoglu theorem, there exists a function $f\in L^\infty(\mathcal{Q}_T)$ such that $f^n$ converges to $f$ in the weak*-topology of $L^\infty(\mathcal{Q}_T)$; i.e., in $\sigma(L^\infty(\mathcal{Q}_T), L^1(\mathcal{Q}_T))$. In addition to this boundedness, we establish the non-negativity and the polynomial decay of $f$ in $x,v$ of order $(m,m)$. Let $\varphi\in C_c^2(\mathcal{Q}_T)$ be a non-negative test function, $\varphi\ge0$. Then, for each $n\in\mathbb{N}$,
\begin{align*}
\int_{\mathcal{Q}_T} f\varphi = \lim_{n\to\infty}\int_{\mathcal{Q}_T} f^n\varphi\le C\limsup_{n\to\infty} \int_{\mathcal{Q}_T}\left\langle x-\chi\left(\frac{v}{n}\right)vt\right\rangle^{-m}\langle v\rangle^{-m} \varphi,
\end{align*}where $C>0$ is the constant from Lemma \ref{uniform bound}. Note that
\begin{align*}
\left\langle x-\chi\left(\frac{v}{n}\right)vt\right\rangle^{-m}\to \langle x-vt\rangle^{-m}, \quad\textup{uniformly on each compact subset of $\mathcal{Q}_T$.}
\end{align*}Hence, by dominated convergence on the compact support of $\varphi$, we have
\begin{align*}
\int_{\mathcal{Q}_T} f\varphi \le C\int_{\mathcal{Q}_T} \langle x - vt\rangle^{-m}\langle v\rangle^{-m}\varphi.
\end{align*}Since this inequality holds for every non-negative $\varphi$, we apply elementary pointwise bounds for weak formulation   \cite[Exercise 4.26]{FA_Brezis_2011} to conclude
\begin{align*}
f\le C\langle x-vt\rangle^{-m}\langle v\rangle^{-m} \quad \textup{almost everywhere in }\,\,\mathcal{Q}_T.
\end{align*}To show non-negativity of $f$, we observe that for any $\varphi\in C_c^2(\mathcal{Q}_T)$ with $\varphi\ge0$:
\begin{align*}
0\le \lim_{n\to\infty}\int_{\mathcal{Q}_T} f^\textup{in}_{n^{-1}}I^{\frac{1}{n}}\varphi \le \int_{\mathcal{Q}_T}f\varphi.
\end{align*} Hence, $f\ge0$ almost everywhere in $\mathcal{Q}_T$. Combining the upper and lower bounds, we obtain $$0\le f\le C \langle x-vt\rangle^{-m}\langle v\rangle^{-m}.$$

Next, we verify that the limit $f$ belongs to either $W_{p_0}^{1,2}(\mathcal{Q}_T)$ or $W_{q_0,w}^{1,2}(\mathcal{Q}_T)$. Recall the global a priori estimates, \eqref{lem4.4eq2} and \eqref{lem4.4eq3}, and note that $f^n$ solves
\begin{align*}
    \mathcal{L}_{\frac{1}{n}, f^{n-1}}(f^n) = \mathcal{L}_{\frac{1}{n}, f^{n-1}}(f^\textup{in}_{n^{-1}})I^{1/n} \quad \textup{ in }\mathcal{Q}_T.
\end{align*}Thus, we have
\begin{align}\label{n Lp}
    \| f^n\|_{W_{p_0}^{1,2}(\mathcal{Q}_T)} &\lesssim \|\mathcal{L}_{1/n, f^{n-1}}(f^\textup{in}_{n^{-1}})I^{1/n}\|_{L_{p_0}(\Omega_T)},\\
    \| f^n\|_{W_{q_0,w}^{1,2}(\mathcal{Q}_T)} &\lesssim \|\mathcal{L}_{1/n, f^{n-1}}(f^\textup{in}_{n^{-1}})I^{1/n}\|_{L_{q_0,w}(\Omega_T)}.\label{n Lqw}
\end{align}From the general parabolic regularity theory, the implied constants in these estimates grow polynomially in $(\Lambda/\lambda)$ and $1/\lambda$, where
\begin{align*}
    0<\lambda \le (\textup{leading coefficient matrix}) \le \Lambda.
\end{align*}For the present sequence of problems, Lemma \ref{uniform bound} gives
\begin{align*}
    \lambda^{-1} = n \quad\textup{and}\quad (\Lambda/\lambda) = 1+n\|(-\Delta_v)^{-1}f^{n-1}\|_{L^\infty(\mathcal{Q}_T)}\lesssim n+1.
\end{align*}Define the growth exponents
\begin{align*}
    \kappa_1/p_0 \eqdef &\textup{ the maximum of the polynomial growth order in }\lambda^{-1} \textup{ and } (\Lambda/\lambda) \\
    &\textup{ arising from the global estimate } W_{p_0}^{1,2}(\Omega_T)\to L_{p_0}(\Omega_T),\\
    \kappa_2/q_0 \eqdef &\textup{ the maximum of the polynomial growth order in }\lambda^{-1} \textup{ and } (\Lambda/\lambda) \\
    &\textup{ arising from the global estimate } W_{q_0,w}^{1,2}(\Omega_T)\to L_{q_0,w}(\Omega_T),
\end{align*}and set $0<\kappa \eqdef \min\{\kappa_1, \kappa_2\}.$ Assume first that $\kappa = \kappa_1$, and recall the definition of the linear problem \eqref{semi}, in particular that the time-regularized characteristic function $I^\varepsilon$ converges at rate $\varepsilon^\kappa$.  Applying \eqref{n Lp} to  $f^n$ gives
\begin{align}\nonumber
    \|f^n\|_{W_{p_0}^{1,2}(\mathcal{Q}_T)} &\le \|f^n\|_{W_{p_0}^{1,2}(\Omega_T)}\\
    &\lesssim (n+1)^{\kappa/p_0} \| \mathcal{L}_{1/n,f^{n-1}}(f^\textup{in}_{n^{-1}})I^{1/(n+1)^{\kappa}}\|_{L_{p_0}(\Omega_T)}\nonumber\\
    &\lesssim (n+1)^{\kappa/ p_0}\left(\int_{-1/(n+1)^\kappa}^{2/(n+1)^{\kappa}} \|\mathcal{L}_{1/n,f^{n-1}}(f^\textup{in}_{n^{-1}})\|^{p_0}_{L_{p_0}(\mathbb{R}^6)}\,dt\right)^{1/p_0}\nonumber\\
    &\lesssim \sup_{n\in\mathbb{N}} \| \mathcal{L}_{1/n,f^{n-1}}(f^\textup{in}_{n^{-1}})\|_{L_{p_0}(\mathbb{R}^6)}.\label{n dep upper bd}
\end{align}The implicit constants here are independent of $n$. The final term in \eqref{n dep upper bd} is finite because $$\|(E_{f^{n-1}}, (-\Delta_v)^{-1}f^{n-1})\|_{L^\infty(\mathcal{Q}_T)}$$ is uniformly bounded in $n$ by Lemma \ref{uniform bound} and $\|f^\textup{in}_{n^{-1}}\|_{L_{p_0}} \lesssim_m \|f^\textup{in}_{n^{-1}}\|_{\mathcal{C}^2_{m,0}}\le\|f^\textup{in}\|_{\mathcal{C}^2_{m,0}}$ with same estimate for its derivatives. Consequently, $\{f^n\}_{n\ge0}$ is bounded in $W_{p_0}^{1,2}(\mathcal{Q}_T)$. By reflexivity, there exists a weakly convergent subsequence whose limit $g$ satisfies
\begin{align*}
    \int_{\mathcal{Q}_T} g \varphi= \lim_{n\to\infty} \int_{\mathcal{Q}_T}  f^n \varphi=\int_{\mathcal{Q}_T}f\varphi, \quad \forall \varphi\in C_c^\infty(\mathcal{Q}_T)\subset L_{p_0}(\mathcal{Q}_T),
\end{align*}so that $g=f$ almost everywhere in $\mathcal{Q}_T$. 

The case $\kappa=\kappa_2$ is treated analogously, but when we obtain the finiteness of the norm of $\mathcal{L}_{n^{-1},f^{n-1}}(f^\textup{in}_{n^{-1}})$ in $L_{q_0,w}$, notice that $L^\infty_{m,2m}(\mathbb{R}^6) \hookrightarrow L_{q_0,w}(\mathbb{R}^6)$ holds for $m>3$, $3<q_0<\frac{11}{3}$ and $w=\langle x\rangle\langle v\rangle^{8}$. Hence, $\{f^n\}_{n\ge0}$ is bounded in $W_{q_0,w}^{1,2}(\mathcal{Q}_T)$ and its weak limit belongs to this weighted Sobolev space.

Combining these two inclusions, we conclude that  the limit function $f$ belongs to the space $X_T^m$.

\textbf{Step 2: Verification of the weak formulation in the limit. }We now show that the limit function $f$, obtained in the previous subsection above, satisfies the weak formulation defined in Definition \ref{Weak formulation}, with dependence on the initial data $f^\textup{in}$. Let $\varphi\in C_c^2([0,T)\times\mathbb{R}^6)$ be a smooth, compactly supported test function. Multiplying the approximate equation \eqref{n approx} by $\varphi$ and integrating over $\mathcal{Q}_T$, we obtain:
\begin{multline*}
\int_{\mathcal{Q}_T}\varphi\left[\partial_tf^n + \chi\left(\frac{v}{n}\right)v\cdot\nabla_xf^n + J^{\frac{1}{n}}(t)\chi(x/n)E_{f^{n-1}}\cdot\nabla_vf^n\right]\,dxdvdt\\
=\int_{\mathcal{Q}_T} \varphi\left[(-\Delta_v)_{n^{-1}}^{-1}f^{n-1}\Delta_vf^n + f^{n-1}f^n +\frac{1}{n}\Delta_{x,v}f^n + \mathcal{L}_{\frac{1}{n}, f^{n-1}}(f^\textup{in}_{n^{-1}})I^{\frac{1}{n}}\right]\,dxdvdt.
\end{multline*}To verify convergence to the weak formulation, we analyze the left-hand and right-hand sides by separating them into linear and nonlinear components. 
\begin{itemize}
    \item Linear terms: 
\begin{align*}
\partial_tf^n, \quad \chi\left(\frac{v}{n}\right)v\cdot\nabla_xf^n, \quad (1/n)\Delta_{x,v}f^n, \quad \textup{and} \quad \mathcal{L}_n(f^\textup{in}_{n^{-1}})I^{\frac{1}{n}}.
\end{align*} \item Nonlinear terms: 
\begin{align*}
\chi(x/n)E_{f^{n-1}}\cdot\nabla_v f^n,\quad (-\Delta_v)_{n^{-1}}^{-1}f^{n-1} \Delta_vf^n, \quad \textup{and}\quad  f^{n-1}f^n.
\end{align*}
\end{itemize}
We now verify that the linear terms in the approximate equations converges to their counterparts in the weak formulation as $n\to\infty$, using the test function $\varphi\in C_c^2([0,T)\times\mathbb{R}^6)\subset L^1(\mathcal{Q}_T)$.

\noindent\textbf{(L1) Time derivative term:} We first examine the time derivative term:
\begin{multline*}
\lim_{n\to\infty}\int_0^T \iint_{\mathbb{R}^3\times\mathbb{R}^3}\varphi \partial_tf^n\,dxdvd\tau 
= \lim_{n\to\infty}\left[ - \iint \varphi(0)f^\textup{in}_{n^{-1}}\,dxdv - \int_0^T \iint \partial_t\varphi f^n\,dxdvd\tau\right]\\
= - \iint \varphi(0)f^\textup{in}\,dxdv - \int_0^T \iint \partial_t\varphi f\,dxdvd\tau.
\end{multline*}Here, in the first equality, we used $f^\textup{in}_{n^{-1}}\to f^\textup{in}$ as $n\to\infty$ in $L^\infty_{x,v}$ and in the second equality, we used the definition of the weak*-convergence $f^n \rightharpoonup f$ in $L^\infty(\mathcal{Q}_T)$.

\noindent\textbf{(L2) Advection term:} Next, consider the transport term:$$
\lim_{n\to\infty} \int_0^T \iint_{\mathbb{R}^3\times\mathbb{R}^3} \varphi \chi(v/n)v\cdot\nabla_xf^n\,dxdvd\tau 
= - \lim_{n\to\infty} \int_0^T \iint f^n \chi(v/n)v\cdot\nabla_x\varphi\,dxdvd\tau.$$Since $\chi(v/n)v\cdot\nabla_x\varphi \equiv v\cdot\nabla_x\varphi$ for large $n$, we have:
\begin{align*}
= -\int_0^T\iint fv\cdot\nabla_x\varphi\,dxdvd\tau.
\end{align*}

\noindent\textbf{(L3) Diffusion term: }The artificial term vanishes in the limit:
\begin{align*}
\lim_{n\to\infty} \int_0^T \iint_{\mathbb{R}^3\times\mathbb{R}^3} \frac{1}{n}\Delta_{x,v}f^n \varphi\,dxdvd\tau &= \lim_{n\to\infty} \int_0^T \iint \frac{1}{n}f^n \Delta_{x,v}\varphi\,dxdvd\tau\\
&=0,
\end{align*}since $\Delta_{x,v}\varphi\in C_c(\mathcal{Q}_T)$ and $\{f^n\}$ is uniformly bounded in $L^\infty(\mathcal{Q}_T)$.

\noindent\textbf{(L4) Inhomogeneous source term: }Finally, we analyze the inhomogeneous source term:
\begin{align*}
\int_0^T \iint_{\mathbb{R}^6} \varphi \mathcal{L}_{\frac{1}{n}, f^{n-1}}(f^\textup{in}_{n^{-1}})I^n\,dxdvd\tau \to 0 \quad \textup{ as }\,\,n\to\infty,
\end{align*}because the integrand converges to zero pointwisely and is uniformly bounded by an integrable function due to the decay of $\mathcal{L}_{\frac{1}{n}, f^{n-1}}(f^\textup{in}_{n^{-1}})$. Hence, the dominated convergence theorem applies.

Thus, all linear terms in the approximate equation converge to their respective terms in the weak formulation satisfied by $f$.

Before passing to the limit in the nonlinear terms, we obtain the following convergence from the dominated convergence theorem:
\begin{align}
(-\Delta_v)_{n^{-1}}^{-1}f^n \to (-\Delta_v)^{-1}f, \quad &\textup{in }\mathcal{Q}_T,\label{inv Lapl lim}\\
\nabla_v(-\Delta_v)_{n^{-1}}^{-1}f^n \to \nabla_v(-\Delta_v)^{-1}f, \quad &\textup{in }\mathcal{Q}_T,\label{d in Lapl limit}\\
E_{f^n} \to E_f, \quad &\textup{in }\mathcal{Q}_T.\label{E lim}
\end{align}To prove these convergences, we use (i) $f^n \to f$ a.e. in $\mathcal{Q}_T$ up to a subsequence, and (ii) $|f^n(t,x,v)|\le C\langle x\rangle^{-m}\langle v\rangle^{-m}$. Since (ii) is established in Step 1, it remains only to prove (i). The latter follows from the Rellich-Kondrachov theorem together with local compactness on bounded cylinders and a diagonal argument.

Now, we inspect the nonlinearity term by term: 

\noindent\textbf{(N1) Collision terms: }We analyze the term 
\begin{align*}
(-\Delta_v)_{n^{-1}}^{-1}f^{n-1}\Delta_vf^n + f^n f^{n-1}.
\end{align*}Testing against $\varphi\in C_c^2([0,T)\times\mathbb{R}^6)$, we compute:
\begin{multline*}
\int_0^T\iint_{\mathbb{R}^6} \varphi[(-\Delta_v)_{n^{-1}}^{-1}f^{n-1}\Delta_vf^n + f^{n-1}f^n]\,dxdvd\tau\\
= \int_0^T\iint_{\mathbb{R}^6} f^n \Delta_v[\varphi(-\Delta_v)_{n^{-1}}^{-1}f^{n-1}] + \varphi f^{n-1}f^n \,dxdvd\tau\\
= \int_0^T\iint_{\mathbb{R}^6} f^n \Delta_v\varphi(-\Delta_v)_{n^{-1}}^{-1}f^{n-1} \,dxdd\tau
+ \int_0^T\iint_{\mathbb{R}^6}2f^n\nabla_v\varphi\cdot\nabla_v(-\Delta_v)_{n^{-1}}^{-1}f^{n-1}\,dxdvd\tau \\
+\int_0^T \varphi f^{n-1}f^n(1-\chi(v/n))\,dxdvd\tau=: I + II + III.
\end{multline*}For the term $I$, we further decompose it as 
\begin{multline*}
I - \int_0^T\iint_{\mathbb{R}^6} f\Delta_v\varphi(-\Delta_v)^{-1}f\,dxdvd\tau\\
= \int_0^T\iint_{\mathbb{R}^6} (f^n - f)\Delta_v\varphi(-\Delta_v)_{n^{-1}}^{-1}f^{n-1}\,dxdvd\tau \\
+ \int_0^T \iint_{\mathbb{R}^6} f\Delta_v\varphi(-\Delta_v)^{-1}(\chi(v/n)f^{n-1} - f)\,dxdvd\tau =: I_1 + I_2.
\end{multline*}For $I_1$, since $(-\Delta_v)_{n^{-1}}^{-1}f^{n-1}$ is bounded in $L^\infty$ uniformly in $n\in\mathbb{N}$ due to the estimate
\begin{align*}
|(-\Delta_v)^{-1}f^n(t,x,v)| \le \frac{C}{4\pi} \int_{\mathbb{R}^3} \frac{du}{|u-v|\langle u\rangle^{m}}<\infty,
\end{align*}(by decay properties from Lemma \ref{uniform bound}), and since $f^n \to f$ a.e. with uniform integrable bound, the dominated convergence theorem provides $I_1 \to 0$ as $n\to\infty$. For $I_2$, by the convergence \eqref{inv Lapl lim} and the compact support of $\Delta_v\varphi$, the dominated convergence again applies, giving $I_2\to0$. Therefore,
\begin{align}\label{Colls1 limit}
I \to \int_0^T \iint_{\mathbb{R}^6} f(-\Delta_v)^{-1}f\Delta_v\varphi\,dxdvd\tau, \quad n\to\infty.
\end{align}

For the term $II$, we again decompose the term as 
\begin{multline*}
\frac{1}{2}\left(II- 2\int_0^T \iint_{\mathbb{R}^6}f\nabla_v\varphi\cdot\nabla_v(-\Delta_v)^{-1}f\,dxdvd\tau\right) \\
= \int_0^T \iint_{\mathbb{R}^6} (f^n - f)\nabla_v\varphi\cdot\nabla_v(-\Delta_v)_{n^{-1}}^{-1}f^{n-1} + f\nabla_v\varphi\cdot\nabla_v(-\Delta_v)^{-1}[\chi(v/n)f^{n-1}-f]\,dxdvd\tau\\
=: II_1 + II_2.
\end{multline*}For the difference $II_1$, using again that $\nabla_v(-\Delta_v)^{-1}f^{n-1}$ is bounded in $L^\infty$ uniformly in $n\in\mathbb{N}$ and $f^n \to f$ a.e., we conclude $II_1\to0$ via dominated convergence. Next, for $II_2$, the convergence \eqref{d in Lapl limit} and the boundedness of $f$ and $\nabla_v\varphi$ on the compact support again ensures $II_2\to0$ via dominated convergence. Hence,
\begin{align}\label{Colls2 limit}
II \to 2\int_0^T\iint_{\mathbb{R}^6} f\nabla_v\varphi\cdot\nabla_v(-\Delta_v)^{-1}f\,dxdvd\tau.
\end{align}

For the term $III$, since $\varphi(1-\chi(v/n))= 0$ for sufficiently large $n$, we conclude that by the dominated convergence again, $III\to0$ as $n\to\infty$.

Combining \eqref{Colls1 limit}, \eqref{Colls2 limit} and the fact that $III\to0$ as $n\to\infty$, we find that the nonlinear collision term converges to its weak limit:
\begin{multline*}
\int_0^T\iint_{\mathbb{R}^6} \varphi\left[(-\Delta_v)_{n^{-1}}^{-1}f^{n-1}\cdot\Delta_vf^n + f^{n-1}f^n\right]\,dxdvd\tau\\
\to\int_0^T \iint_{\mathbb{R}^6} f\cdot(-\Delta_v)^{-1}f\cdot\Delta_v\varphi + 2f\nabla_v\varphi\cdot\nabla_v(-\Delta_v)^{-1}f\,dxdvd\tau.
\end{multline*}Thus it verifies convergence of the nonlinear terms in the weak formulation.

\noindent\textbf{(N2) Acceleration term:} We now analyze the nonlinear acceleration term
\begin{align*}
\chi(x/n)E_{f^{n-1}} \cdot\nabla_vf^n.
\end{align*}Testing this term against $\varphi\in C_c^2([0,T)\times\mathbb{R}^6)$, we consider the limit
\begin{align*}
\int_0^T\iint_{\mathbb{R}^6} \chi(x/n)E_{f^{n-1}}\cdot\nabla_vf^n \varphi\,dxdvd\tau- \int_0^T\iint_{\mathbb{R}^6} E_f\cdot\nabla_vf \varphi\,dxdvd\tau.
\end{align*}Integrating by parts in the velocity variable, and using that for sufficiently large $n$, $\chi(x/n)\varphi \equiv \varphi$, we write:
\begin{align*}
= -\int_0^T \iint_{\mathbb{R}^6} (f^n E_{f^{n-1}} - fE_f)\cdot\nabla_v\varphi\,dxdvd\tau.
\end{align*}Splitting the integrand:
\begin{equation*}
= -\int_0^T \iint_{\mathbb{R}^6} (f^n - f)E_{f^{n-1}}\cdot\nabla_v\varphi + f(E_{f^{n-1}} - E_f)\cdot\nabla_v\varphi\,dxdvd\tau=:\textit{III}_1 + \textit{III}_2.
\end{equation*}For $\textit{III}_1$, since $E_{f^{n-1}}$ is bounded in $L^\infty$ uniformly in $n$ by \eqref{E bound}, we have $\textit{III}_1 \to0$ by dominated convergence. For $\textit{III}_2$, we use the convergence:
\begin{align*}
E_{f^{n-1}}\to E_f \quad \textup{ a.e. in }\,\mathcal{Q}_T,
\end{align*}as established in \eqref{E lim}. Moreover, since $f$ is fixed and bounded (with decay), and $\nabla_v\varphi$ is compactly supported, we again apply the dominated convergence theorem to obtain $\textit{III}_2\to0$.

Combining the two limits, we conclude:
\begin{align*}
\int_0^T\iint_{\mathbb{R}^6} \chi(x/n)E_f^{n-1}\cdot\nabla_vf^n \varphi\,dxdvd\tau \to \int_0^T\iint_{\mathbb{R}^6} fE_f\cdot\nabla_v\varphi\,dxdvd\tau.
\end{align*}

Therefore, we have shown that the limit $f$ is a weak solution to the initial value problem of \eqref{VPiL} for $f^\textup{in}\in \mathcal{C}^2_{m,0}(\mathbb{R}^6)$ with $f^\textup{in}\ge0$.\end{proof}

\begin{remark}The solution $f$ constructed above is never identically equal to zero for any $t>0$, unless the initial data satisfies $f^\textup{in}\equiv0$. This follows from the structure of the weak formulation in Definition \ref{Weak formulation}.
\end{remark}

\begin{remark}
    Although $f^\textup{in}\in C^{2,\alpha}(\mathbb{R}^6)$, the same argument as in \eqref{n dep upper bd} does not yield the $C^{2,\alpha}_{\textup{para}}$ regularity of $f$, since 
$$
\|I^\varepsilon\|_{C^{\alpha/2}_t} \to \infty \quad \textup{as } \varepsilon \to 0 .
$$

\end{remark}

\section{Singularity Formation}
\label{Singularity formation}

In this section, we prove Theorem \ref{thm.main2} and Corollary \ref{collapse}. Throughout, we assume the hypotheses of Theorem \ref{thm.main2} and let $f$ denote the corresponding solution to \eqref{VPiL}. We also recall from Theorem \ref{thm.main2} that gravitational interaction is given by the Coulomb-type potential $K(x) = -\frac{1}{4\pi|x|}$.

In Lemma \ref{basic upper bound}, we establish the $L^\infty$--boundedness of the coefficient appearing in the potential, as well as the boundedness of its derivatives, namely the gravitational field $E_f$.

\begin{lemma}\label{basic upper bound} For each $t\in[0,T]$, we have
\begin{align*}
\nabla_xK\star_x \rho, \quad K\star_x \rho\in L^\infty ([0,T] \times \mathbb{R}^3),
\end{align*}where $\star_x$ denotes the convolution in the spatial variable.
\end{lemma}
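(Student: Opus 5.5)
The plan is to reduce both bounds to pointwise bounds on $\rho=\rho_f$, using the polynomial decay of $f$ from the local theory. Since $f\in X_T^m$ with $m>3$ (indeed $m>6$ under Theorem~\ref{thm.main2}) is the solution constructed in Theorem~\ref{thm.main1}, Lemma~\ref{uniform bound} and the limiting argument give
\begin{align*}
0\le f(t,x,v)\le C\langle x-vt\rangle^{-m}\langle v\rangle^{-m}\quad\text{a.e. in }\mathcal{Q}_T .
\end{align*}
Integrating in $v$ and using $\langle x-vt\rangle^{-m}\le 1$, I obtain $\rho(t,x)\le C\int\langle v\rangle^{-m}dv\lesssim_m C$. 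This gives $\rho\in L^\infty([0,T]\times\mathbb{R}^3)$.

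Next I need integrability of $\rho$ in $x$, uniformly in $t$. By Lemma~\ref{embed} (with $\chi\equiv1$ formally, or directly via the same Schwarz inequality argument),
\begin{align*}
\langle x-vt\rangle^{-m}\le (2\langle T\rangle)^m\langle x\rangle^{-m}\langle v\rangle^{m}.
\end{align*}
This would cost $\langle v\rangle^m$, which is too much. Instead, for fixed $t$ I integrate in $x$ first: $\int\langle x-vt\rangle^{-m}dx=\int\langle y\rangle^{-m}dy<\infty$. Hence $\|\rho(t)\|_{L^1_x}\lesssim_m C$ uniformly in $t\in[0,T]$. Combining the two bounds, $\rho\in L^\infty_t(L^1\cap L^\infty)_x$.

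For the potentials, I split each kernel near and far from the origin. For the field,
\begin{align*}
|\nabla_xK\star\rho(t,x)|\le\frac1{4\pi}\int_{|x-y|<1}\frac{\rho(t,y)}{|x-y|^2}dy+\frac1{4\pi}\int_{|x-y|\ge1}\rho(t,y)\,dy\lesssim \|\rho(t)\|_{L^\infty}+\|\rho(t)\|_{L^1}.
\end{align*}
This uses the local integrability of $|z|^{-2}$ in $\mathbb{R}^3$. The same splitting with $|z|^{-1}$ bounds $|K\star\rho|$ by the same quantities. Both bounds are uniform in $(t,x)$, which yields the claim. Alternatively, one could apply \eqref{int decay} directly to the decay of $f$, as in \eqref{E bound}.

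The only delicate point is the uniform-in-time $L^1_x$ bound: the transport weight $\langle x-vt\rangle$ does not give decay in $x$ uniformly in $v$. I would handle it by integrating in $x$ before $v$, via Tonelli, as above. Measurability and a.e. issues are routine since $f\ge0$.
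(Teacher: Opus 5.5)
Your proposal is correct and follows essentially the same route as the paper, which also deduces $\rho\in L^\infty([0,T];(L^1\cap L^\infty)(\mathbb{R}^3))$ from the decay built into $X_T^m$ and then applies the near/far kernel splitting of Lemma~\ref{Linfty.est.sing.int}. Your version is simply more explicit: you integrate in $x$ first to get the uniform-in-time $L^1_x$ bound, and you use $\|\rho(t)\|_{L^1}$ directly in the far region instead of some $L^p$ norm with $p>1$.
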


\begin{proof}
This follows from Lemma \ref{Linfty.est.sing.int} and the integrability of the local mass $\rho\in L^\infty([0,T]; (L^1\cap L^\infty)(\mathbb{R}^3))$, which is ensured by the decay condition imposed on $f\in X_T^m$.
\end{proof}

Before proceeding, we establish rigorously the differentiability of the relevant real-valued functionals associated with $f$.

\begin{lemma}\label{intch}
    We have
    \begin{align*}
        \frac{d}{dt}\iint_{\mathbb{R}^6} 
        \begin{pmatrix}
        1\\
        |x|^2\\
        |v|^2\\
        x\cdot v
        \end{pmatrix}f(t,x,v)\,dxdv &= \iint_{\mathbb{R}^6} \begin{pmatrix}
        1\\
        |x|^2\\
        |v|^2\\
        x\cdot v
        \end{pmatrix}\partial_tf(t,x,v)\,dxdv, \quad \textup{for a.e. } t\in[0,T],
    \end{align*}and for each fixed $x\in\mathbb{R}^3$ we have for a.e. $t\in[0,T]$ that
    \begin{align*}
        \frac{\partial}{\partial t}\rho(t,x) &= \int_{\mathbb{R}^3} \partial_tf(t,x,v)\,dv,\\
        \nabla_x \cdot j(t,x) &= \int_{\mathbb{R}^3} v\cdot\nabla_xf(t,x,v)\,dv.
    \end{align*}
\end{lemma}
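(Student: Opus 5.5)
The plan is to justify each interchange of $\frac{d}{dt}$ and integration by the standard differentiation-under-the-integral theorem, which is a consequence of dominated convergence. For fixed $(x,v)$, the map $t\mapsto f(t,x,v)$ is absolutely continuous, because $f\in X_T^m$ lies in a parabolic Sobolev space and $\partial_t f$ admits a pointwise representative by the first condition of the class $A_2$. So for $t,t+h\in[0,T]$ we can write
\begin{align*}
\frac{f(t+h,x,v)-f(t,x,v)}{h}=\frac1h\int_t^{t+h}\partial_tf(s,x,v)\,ds .
\end{align*}
By \eqref{Dtf dom}, the absolute value of this difference quotient is bounded by $g_1(x,v)$, uniformly in $h$ and $t$. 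Multiplying by the weights $1,|x|^2,|v|^2,x\cdot v$ gives the bounds $g_1$, $|x|^2g_1$, $|v|^2g_1$ and $\tfrac12(|x|^2+|v|^2)g_1$. Each of these is integrable over $\mathbb{R}^6$ because $g_1\in(L^1_2)_{x,v}$. Hence, at every Lebesgue point $t$ of $s\mapsto\partial_tf(s,x,v)$ (which is a.e.\ $t$ for a.e.\ $(x,v)$), dominated convergence lets us pass $h\to0$ inside the integral. Fubini then upgrades "a.e.\ $(x,v)$" to a statement that holds for a.e.\ $t$. For a cleaner version, I would integrate the identity in time to get $\iint\phi f(t)-\iint\phi f(0)=\int_0^t\iint\phi\,\partial_tf$, using Fubini (justified by the same domination). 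Then Lebesgue differentiation gives the derivative for a.e.\ $t$.

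For $\rho$, the argument is the same with $x$ fixed. The difference quotient in $v$ is dominated by $g_1(x,\cdot)$, and this function is in $L^1_v$ for a.e.\ $x$, and in fact for every $x$ under the $L^\infty_xL^1_v$ part of the hypothesis $g_1\in(L^1\cap L^\infty)_xL^1_v$. The identity therefore holds for a.e.\ $t$.

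For the current $j(t,x)=\int vf\,dv$, I would differentiate in $x_i$ by the mean value form
\begin{align*}
\frac{f(t,x+he_i,v)-f(t,x,v)}{h}=\int_0^1\partial_{x_i}f(t,x+\theta he_i,v)\,d\theta ,
\end{align*}
which is valid since $f(t,\cdot,v)$ is locally absolutely continuous in $x$. After multiplying by $v$, \eqref{Dxf dom} bounds this difference quotient by $|v|g_2(v)$, and $|v|g_2(v)$ is integrable because $g_2\in\dot L^1_1$. Dominated convergence then gives $\partial_{x_i}j_i=\int v_i\partial_{x_i}f\,dv$, and summing over $i$ gives $\nabla_x\cdot j=\int v\cdot\nabla_xf\,dv$. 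Condition \eqref{vDxf Dvf L1w} confirms that the right-hand side is finite.

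The main obstacle I expect is the a.e.\ issue. The pointwise representative of $\partial_tf$ must actually be the derivative of $t\mapsto f(t,x,v)$ along a.e.\ line. I would handle this through the Sobolev (ACL) characterization: a function in $W^{1,2}_{p}$ has a representative that is absolutely continuous on a.e.\ line parallel to the coordinate axes, and its classical partial derivatives coincide with the weak ones. The statement is only claimed for a.e.\ $t$, which is consistent with this approach.
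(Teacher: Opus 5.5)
Your proposal is correct and is essentially the paper's argument. Both use the fundamental theorem of calculus in $t$ (resp. in $x_i$) along lines, dominate the weighted difference quotients by $|w|g_1$, $g_1(x,\cdot)$ and $|v|g_2$, and then apply Fubini, Lebesgue differentiation and dominated convergence. One point to tighten: the ACL characterization you fall back on gives absolute continuity only on almost every line, which is not enough for the assertions made at each fixed $x$. For those, rely instead, as the paper implicitly does, on reading the pointwise-defined derivative from $A_2$ as a classical derivative; its bound by $g_1(x,v)$ (resp. $g_2(v)$) then makes $f$ Lipschitz along every line in $t$ (resp. $x_i$).
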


\begin{proof}
    Let $w(x,v)\in \{1, |x|^2, |v|^2, x\cdot v\}$ and let $\bar{f} \eqdef wf$. Since $f\in X_T^m\cap A_2$, we have $|\partial_t\bar{f}| \le wg_1$ for all $(t,x,v)$, where $wg_1\in L^\infty_{x,v}$ with $g_1$ bounded. Then for $(x,v)$, we have that $\partial_t\bar{f}(\cdot,x,v)$ is bounded in $[0,T]$, hence for the mapping $t\mapsto \bar{f}(t,x,v)$, the fundamental theorem of calculus holds
    \begin{align*}
        \bar{f}(t+h,x,v) - \bar{f}(t,x,v) = \int_t^{t+h} \partial_{\tau}\bar{f}(\tau,x,v)\,d\tau \quad \textup{for } t,t+h\in[0,T].
    \end{align*}By integrating the above identity, we obtain for $t,t+h\in[0,T]$,
    \begin{align}\label{ftc barf}
        \frac{1}{h}\iint_{\mathbb{R}^6} \bar{f}(t+h,x,v) - \bar{f}(t,x,v)\,dxdv = \iint_{\mathbb{R}^6}\frac{1}{h}\left(\int_t^{t+h}\partial_{\tau}\bar{f}(\tau,x,v)\,d\tau\right)\,dxdv.
    \end{align}Indeed, by the Lebesgue differentiation theorem and dominated convergence theorem, the right-hand side converges to $\iint_{\mathbb{R}^6} \partial_t\bar{f}(t,x,v)\,dxdv$ as $h\to0$ for a.e. $t\in[0,T]$. The integrability of $\partial_t \bar{f}$ is guaranteed by $|\partial_t\bar{f}|\le |w|g_1\in L^1_{x,v}$. Therefore, this establishes the first claim.

    The remaining assertions follow by the same argument, using the corresponding domination assumptions (e.g. $g_1\in L^\infty_xL^1_v$ and $g_2\in (\dot{L}^\infty_1\cap\dot{L}^1_{1})_v$) for the relevant weighted quantities. This completes the proof.
\end{proof}

In Lemma \ref{mass conservation} and \ref{continuity eq}, we show two fundamental conservation laws.

\begin{lemma}[Mass conservation]\label{mass conservation} It holds that
\begin{align*}
\sup_{t\in[0,T]}\lVert \rho(t)\rVert_{L^1_x} = \iint_{\mathbb{R}^6} f^\textup{in}(x,v)\,dvdx<\infty.
\end{align*}
\end{lemma}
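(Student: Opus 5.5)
Since $f\ge 0$, we have $\|\rho(t)\|_{L^1_x}=\iint_{\mathbb{R}^6} f(t,x,v)\,dvdx=:\textup{M}(t)$. So it suffices to show that $\textup{M}$ is finite, absolutely continuous, has zero derivative for a.e. $t$, and satisfies $\textup{M}(t)\to \iint f^\textup{in}$ as $t\to0^+$. Finiteness is immediate from $f\in X_T^m$ with $m>3$. Indeed, $0\le f\le C\langle x-vt\rangle^{-m}\langle v\rangle^{-m}$, and the change of variables $x\mapsto x-vt$ gives $\textup{M}(t)\le C\|\langle\cdot\rangle^{-m}\|_{L^1}^2$ uniformly in $t\in[0,T]$.

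For the derivative, apply Lemma \ref{intch} with weight $1$:
\begin{align*}
\textup{M}'(t)=\iint_{\mathbb{R}^6}\partial_t f\,dxdv \quad\text{for a.e. } t .
\end{align*}
Absolute continuity follows from the fundamental theorem of calculus argument in that proof together with the domination $|\partial_t f|\le g_1\in L^1_{x,v}$.

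To evaluate $\iint\partial_t f$, use the weak formulation \eqref{weak formulation}. Choose test functions $\varphi(t,x,v)=\eta(t)\psi_R(x,v)$, where $\eta\in C_c^2([0,T))$ and $\psi_R(x,v)=\psi(x/R)\psi(v/R)$ is a smooth cutoff equal to $1$ on $B_R\times B_R$. Substituting gives
\begin{align*}
\int_0^T\!\!\iint f\,\eta'\psi_R
+\int_0^T\!\eta\!\iint f\Big[v\cdot\nabla_x\psi_R+E_f\cdot\nabla_v\psi_R+2\nabla_v(-\Delta_v)^{-1}f\cdot\nabla_v\psi_R+(-\Delta_v)^{-1}f\,\Delta_v\psi_R\Big]
=-\eta(0)\iint f^\textup{in}\psi_R .
\end{align*}
We then let $R\to\infty$ term by term:
\begin{itemize}
\item $|v\cdot\nabla_x\psi_R|\lesssim |v|R^{-1}1_{\{|x|\sim R\}}$. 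Since $|v|f\in L^1$ (because $m>4$ is not needed: $m>3$ gives $\langle v\rangle^{1-m}$ integrable after the shift), dominated convergence sends this term to $0$.
\item The fields $E_f$, $\nabla_v(-\Delta_v)^{-1}f$ and $(-\Delta_v)^{-1}f$ are bounded in $L^\infty$ by \eqref{E bound}, \eqref{int decay} and Lemma \ref{basic upper bound}. Also $|\nabla_v\psi_R|\lesssim R^{-1}$ and $|\Delta_v\psi_R|\lesssim R^{-2}$, and $f\in L^1$. So these terms tend to $0$ as well.
\end{itemize}
Since $\psi_R\to1$ and $f\in L^\infty_tL^1_{x,v}$, the limit is
\begin{align*}
\int_0^T \eta'(t)\textup{M}(t)\,dt=-\eta(0)\iint f^\textup{in}\,dxdv
\end{align*}
for every $\eta\in C_c^2([0,T))$. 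Taking $\eta$ supported in $(0,T)$ shows that the distributional derivative of $\textup{M}$ vanishes, so $\textup{M}$ is a.e. constant. Taking general $\eta$ then identifies that constant as $\iint f^\textup{in}$. By the continuity of $\textup{M}$ obtained above, the identity holds for every $t$, which gives the supremum statement.

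The main obstacle is justifying the vanishing of the cutoff error terms, in particular the transport term, which needs $|v|f\in L^1$ uniformly in $t$. This follows from the decay $\langle x-vt\rangle^{-m}\langle v\rangle^{-m}$ with $m>3$: integrate in $x$ first, then use $\langle v\rangle^{1-m}\in L^1$ only for $m>4$. If that fails for $3<m\le4$, I would instead use the $A_2$ hypothesis \eqref{vDxf Dvf L1w} directly. There $v\cdot\nabla_x f\in L^1$, so one can integrate the equation in $(x,v)$ pointwise a.e. in $t$ and use $\int Q_\textup{iso}\,dv=0$ (Remark \ref{formal conservation laws}) together with $\int E_f\cdot\nabla_v f\,dv=0$.
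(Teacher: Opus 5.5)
Your proof is correct, but it takes a different route from the paper.

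\textbf{The paper's route.} The paper works with the strong form of the equation. Via \eqref{ftc barf} with $w=1$, it reduces the claim to $\iint_{\mathbb{R}^6}\partial_t f\,dxdv=0$ for a.e.\ $t$. It then substitutes \eqref{VPiL} for $\partial_t f$ pointwise a.e. and removes each term using the $A_2$ hypotheses \eqref{Dtf dom} and \eqref{vDxf Dvf L1w}. The transport and force terms vanish by the divergence theorem. The term $(-\Delta_v)^{-1}f\,\Delta_v f$ is integrated by parts into $-f^2$, which cancels the $+f^2$ term.

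\textbf{Your route.} You instead test the weak formulation \eqref{weak formulation} against $\eta(t)\psi_R(x,v)$ and let $R\to\infty$. This needs only two inputs: $L^\infty$ bounds on $E_f$, $(-\Delta_v)^{-1}f$ and $\nabla_v(-\Delta_v)^{-1}f$, and the $L^1$ decay of $f$. You never need the equation to hold pointwise, any integrability of $\nabla_v f$ or $\Delta_v f$, or a justification of the integration by parts of $(-\Delta_v)^{-1}f\,\Delta_v f$. You also identify the constant value of $\textup{M}$ with $\iint f^{\textup{in}}$ directly, through the term $-\eta(0)\iint f^{\textup{in}}\psi_R$. The paper's argument leaves $\textup{M}(0)=\iint f^{\textup{in}}$ implicit. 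In your proof the $A_2$ assumption enters only through Lemma \ref{intch}, to make $\textup{M}$ continuous and upgrade ``a.e.\ $t$'' to every $t\in[0,T]$. The paper's version is shorter, but it relies on the equation holding a.e.\ and on the $A_2$ integrability at every integration by parts. Yours uses only what Definition \ref{Weak formulation} provides, plus the decay bound.

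\textbf{A correction on the transport term.} Your remark on the transport cutoff term contradicts itself: $\langle v\rangle^{1-m}\in L^1(\mathbb{R}^3)$ requires $m>4$, not $m>3$. This is harmless for two reasons. First, this section assumes $m>6$. Second, no velocity moment is needed at all. If $\psi$ is supported in $B_2$, then $|v|\le 2R$ on the support of $\psi(v/R)$, so $|v\cdot\nabla_x\psi_R|\le 2\|\nabla\psi\|_{L^\infty}1_{\{|x|\ge R\}}$. The error term is therefore bounded by $C\int_0^T\iint_{\{|x|\ge R\}}f\,dxdvdt$, which tends to $0$ by dominated convergence on $\mathcal{Q}_T$. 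Your proposed fallback to \eqref{vDxf Dvf L1w} is unnecessary, and it is essentially the paper's own proof anyway.
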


\begin{proof}
    In view of \eqref{ftc barf} with $w=1$, it suffices to show that $\iint_{\mathbb{R}^6}\partial_tf(t,x,v)\,dxdv = 0$ for a.e. $t\in[0,T]$, which implies the conservation of the total mass.
    
    Since $f$ satisfies the equation \eqref{VPiL} for a.e. in $\mathcal{Q}_T$, we may substitute the equation into the integrand $\partial_tf$ to obtain for a.e. $t\in[0,T]$,
    \begin{align}\label{L1 int of eq}
        \iint_{\mathbb{R}^6} \partial_tf(t)\,dvdx = \iint_{\mathbb{R}^6} -v\cdot\nabla_xf - E_f\cdot\nabla_vf + (-\Delta_v)^{-1}f\Delta_vf + f^2\,dvdx.
    \end{align}By the assumptions $f\in X_T^m$, together with \eqref{Dtf dom} and \eqref{vDxf Dvf L1w}, each term on the right-hand side of \eqref{L1 int of eq} belongs to $L^1(\mathbb{R}^6)$. Hence, all the integrals are well-defined, and linearity of integration applies. 
    
    We now evaluate each term. The first two terms vanish by the divergence theorem for Sobolev functions in $x$ and $v$, respectively, since the associated fluxes are integrable and decay at infinity. The third term vanishes by integration by parts for Sobolev functions in $v$, noting that $(-\Delta_v)^{-1}f\Delta_vf\in L^1(\mathbb{R}^6)$ for a.e. $t\in[0,T]$. This integrability follows from the assumption $f\in X_T^m$ together with \eqref{Dtf dom} and \eqref{vDxf Dvf L1w}. Finally, the remaining term $f^2$ is integrable and cancels with the corresponding contribution from the collision operator.

    Consequently, the right-hand side of \eqref{L1 int of eq} vanishes, and therefore $\iint_{\mathbb{R}^6} \partial_tf(t,x,v)\,dxdv = 0$ for a.e. $t\in[0,T]$. This completes the proof.
\end{proof}

\begin{lemma}[Continuity equation]\label{continuity eq} 
It holds that, for each fixed $x\in\mathbb{R}^3$,
\begin{align}\label{cont eq}
    \partial_t \rho(t,x)+\nabla\cdot j(t,x)=0, \quad \textup{ for a.e. } t\in [0,T].
\end{align}
\end{lemma}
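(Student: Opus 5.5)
The plan is to integrate the equation \eqref{VPiL} in $v$ only, at a fixed $x$, and show that the force and collision contributions integrate to zero. By Lemma \ref{intch}, for each fixed $x$ and a.e. $t\in[0,T]$ we already have
\begin{align*}
\partial_t\rho(t,x) = \int_{\mathbb{R}^3}\partial_tf(t,x,v)\,dv, \qquad \nabla_x\cdot j(t,x) = \int_{\mathbb{R}^3} v\cdot\nabla_xf(t,x,v)\,dv,
\end{align*}
so \eqref{cont eq} follows once we integrate the pointwise (a.e.) identity
\begin{align*}
\partial_tf + v\cdot\nabla_xf = -E_f\cdot\nabla_vf + (-\Delta_v)^{-1}f\,\Delta_vf + f^2
\end{align*}
over $v\in\mathbb{R}^3$ and show the right-hand side integrates to zero.

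\textbf{Integrability.} First we justify that each term is in $L^1_v$ for the fixed $x$ and a.e. $t$. For $\partial_tf$ this comes from \eqref{Dtf dom} with $g_1\in L^\infty_xL^1_v$. For $v\cdot\nabla_xf$ it comes from \eqref{Dxf dom} with $g_2\in\dot L^1_{1,v}$. The term $E_f\cdot\nabla_vf$ is handled by Lemma \ref{basic upper bound} ($E_f\in L^\infty$) together with \eqref{vDxf Dvf L1w}; after possibly restricting to a.e. $x$, Fubini gives $\nabla_vf(t,x,\cdot)\in L^1_v$. For the remaining terms we use the decay $f\le C\langle x-vt\rangle^{-m}\langle v\rangle^{-m}$ with $m>3$, which gives $f^2\in L^1_v$ and $(-\Delta_v)^{-1}f\in L^\infty$. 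For $\Delta_vf$ we rely on the Sobolev regularity in $X_T^m$.

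\textbf{Force term.} Since $E_f$ does not depend on $v$,
\begin{align*}
\int E_f\cdot\nabla_vf\,dv = E_f\cdot\int\nabla_v f\,dv = 0 .
\end{align*}
This follows from the divergence theorem on balls $B_R$, using the decay of $f$ in $v$ to kill the boundary term along a sequence $R\to\infty$.

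\textbf{Collision term.} We integrate by parts twice on $B_R$:
\begin{align*}
\int (-\Delta_v)^{-1}f\,\Delta_vf\,dv = \int f\,\Delta_v(-\Delta_v)^{-1}f\,dv = -\int f^2\,dv,
\end{align*}
using $\Delta_v(-\Delta_v)^{-1}f=-f$. This cancels the $f^2$ term, exactly as in Remark \ref{formal conservation laws} and in the proof of Lemma \ref{mass conservation}, but now at fixed $x$.

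\textbf{Main obstacle.} The hard step is justifying this integration by parts at a fixed $x$ rather than after integrating in $x$. Two things are needed.
\begin{itemize}
\item The boundary terms $\int_{\partial B_R}\big((-\Delta_v)^{-1}f\,\partial_\nu f - f\,\partial_\nu(-\Delta_v)^{-1}f\big)\,dS$ must vanish along some $R_k\to\infty$. I would get this from $(-\Delta_v)^{-1}f,\ \nabla_v(-\Delta_v)^{-1}f\in L^\infty$, the pointwise decay of $f$, and $\nabla_vf(t,x,\cdot)\in L^1_v$, which gives $\liminf_R R^{2}\int_{\partial B_R}|\nabla_vf|\,dS/R^2=0$ by a standard averaging argument.
\item The identity $\Delta_v(-\Delta_v)^{-1}f=-f$ must hold in $v$. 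This needs local H\"older (or $W^{2,p}_{\mathrm{loc}}$) regularity of $f(t,x,\cdot)$, which is available a.e. from the parabolic Sobolev membership of $f$.
\end{itemize}
If the fixed-$x$ statement resists, I would first prove the identity for a.e. $x$ and then upgrade to every $x$ using the pointwise representatives from Definition \ref{adm class}.
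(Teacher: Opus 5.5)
Your proposal follows the paper's proof: fix $x$, use Lemma~\ref{intch} to write $\partial_t\rho$ and $\nabla_x\cdot j$ as $v$-integrals, kill the force term by the divergence theorem (since $E_f$ does not depend on $v$), and cancel $\int(-\Delta_v)^{-1}f\,\Delta_v f\,dv$ against $\int f^2\,dv$ by integrating by parts with $\Delta_v(-\Delta_v)^{-1}f=-f$. Your handling of the boundary terms along $R_k\to\infty$, and your observation that the slice argument rigorously gives the identity only for a.e.\ $x$ (a subtlety the paper's own proof glosses over in the same way), add justification the paper leaves implicit without changing the argument.
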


\begin{proof}
    Throughout this proof, fix $x\in\mathbb{R}^3$. By Lemma \ref{intch}, integrating equation \eqref{VPiL} with respect to $v$ yields
    \begin{align*}
        \partial_t\rho + \nabla_x\cdot j = \int_{\mathbb{R}^3} -E_f\cdot\nabla_vf + (-\Delta_v)^{-1}f\Delta_vf + f^2\,dv, \quad \text{ for a.e. }t\in[0,T].
    \end{align*}By the same argument as in the proof of Lemma \ref{mass conservation}, each term on the right-hand side is integrable in $v$ and vanishes by applying the divergence theorem to the first term and integration by parts to the second term. Consequently, the right-hand side is identically zero, which proves \eqref{cont eq}.
\end{proof}

Now, recall from either Definition \ref{phys quantity} or Section \ref{notations} that  the functional $\textup{I}(t) $ is defined as
\begin{align*}
\textup{I}(t) = \frac{1}{2}\iint_{\mathbb{R}^6} |x|^2f(t,x,v)\,dxdv.
\end{align*} In Lemma \ref{I' I''}, we derive explicit formulas for $\textup{I}'(t)$ and $\textup{I}''(t)$, respectively.

\begin{lemma}\label{I' I''} The first and second derivatives of $\textup{I}(t)$ are given by
\begin{align*}
\textup{I}'(t) &= \iint_{\mathbb{R}^6} (x\cdot v)f(t,x,v)\,dxdv, \quad \textup{for a.e. }t\in[0,T],\\
\textup{I}''(t) &= \iint_{\mathbb{R}^6} |v|^2f\,dxdv+\frac{1}{2}\iint_{\mathbb{R}^6} (K\star_x \rho) f \,dxdv \quad \textup{for a.e. }t\in[0,T].
\end{align*}
\end{lemma}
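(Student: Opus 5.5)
For $\textup{I}'(t)$, I would apply Lemma \ref{intch} with weight $w=|x|^2$, which gives $\textup{I}'(t)=\frac12\iint|x|^2\partial_t f\,dxdv$ for a.e.\ $t$. Then I substitute the equation \eqref{VPiL}, which holds a.e.\ in $\mathcal{Q}_T$:
\[
\partial_t f=-v\cdot\nabla_x f-E_f\cdot\nabla_v f+(-\Delta_v)^{-1}f\,\Delta_v f+f^2 .
\]
Each resulting term is integrable against $|x|^2$ by \eqref{vDxf Dvf L1w}, the decay $f\in X_T^m$ with $m>6$, and Lemma \ref{basic upper bound}. I then treat the terms one at a time.
\begin{itemize}
\item The transport term, after integrating by parts in $x$, gives $\iint (x\cdot v)f$. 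Boundary terms vanish because $|x|^2|v|f\to0$ fast enough, using the $\langle x-vt\rangle^{-m}\langle v\rangle^{-m}$ bound on bounded time intervals via Lemma \ref{embed}.
\item The field term is a $v$-divergence $\nabla_v\cdot(E_f f)$, since $E_f$ does not depend on $v$. Its weight $|x|^2$ is $v$-independent, so it integrates to zero.
\item For the collision term I fix $x$. The computation in Remark \ref{formal conservation laws} shows $\int Q_{\mathrm{iso}}(f,f)\,dv=0$: integrating by parts twice moves $\Delta_v$ onto $(-\Delta_v)^{-1}f$, which yields $-f^2$ and cancels the $f^2$ term. Multiplying by $|x|^2$ and integrating in $x$ gives zero.
\end{itemize}

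For $\textup{I}''(t)$, I would apply Lemma \ref{intch} with $w=x\cdot v$ and substitute the equation again.
\begin{itemize}
\item Transport term: $-\iint (x\cdot v)\,v\cdot\nabla_x f=\iint |v|^2 f$.
\item Field term: integrating by parts in $v$ gives $\iint f\,E_f\cdot x\,dxdv=\int \rho\,x\cdot E_f\,dx$.
\item Collision term: $\int (x\cdot v)\,Q_{\mathrm{iso}}\,dv=x\cdot\int v\,Q_{\mathrm{iso}}\,dv$. Using the weak form with test function $\varphi=v_i$, for which $\Delta_v v_i=0$, this equals $2x_i\int f\,\partial_{v_i}(-\Delta_v)^{-1}f\,dv$. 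By \eqref{D potential} this is
\[
\frac{2x_i}{4\pi}\iint \frac{u_i-v_i}{|u-v|^3}f(u)f(v)\,du\,dv=0,
\]
because the kernel is antisymmetric in $(u,v)$. This is momentum conservation of the collision term, and the double integral converges absolutely since $|u-v|^{-2}$ is locally integrable and $f$ decays.
\end{itemize}

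It remains to identify the field contribution, which I expect to be the main obstacle. With $E_f=-\nabla K\star\rho$ and $\nabla K(z)=\frac{z}{4\pi|z|^3}$,
\[
\int \rho\,x\cdot E_f\,dx=-\frac{1}{4\pi}\iint \rho(x)\rho(y)\,\frac{x\cdot(x-y)}{|x-y|^3}\,dx\,dy .
\]
Symmetrizing in $(x,y)$ replaces $x\cdot(x-y)$ by $\tfrac12|x-y|^2$. The integral becomes $-\frac{1}{8\pi}\iint \frac{\rho(x)\rho(y)}{|x-y|}$, which is $\frac12\int (K\star\rho)\rho\,dx=\frac12\iint (K\star\rho)f\,dxdv$.

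The symmetrization requires absolute integrability of $\rho(x)\rho(y)|x|\,|x-y|^{-2}$. I would obtain it from $\rho\in L^1_1\cap L^\infty$ in $x$, which follows from the decay with $m>6$, together with \eqref{int decay}.

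The other delicate point is justifying the integrations by parts with polynomial weights for Sobolev-class $f$. I would do this by truncating the weights with a cutoff $\chi(x/R)\chi(v/R)$ and letting $R\to\infty$ by dominated convergence. The error terms carry factors $R^{-1}|x|\,|v|f$ and $R^{-1}|x|^2|v|f$ on annuli, which vanish by the $A_2$ integrability assumptions \eqref{vDxf Dvf L1w} and the pointwise decay of $f$.
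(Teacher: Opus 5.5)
Your proposal is correct and follows essentially the same route as the paper. You use Lemma \ref{intch}, substitute the equation, kill the collision contribution $2\iint (x\cdot\nabla_v(-\Delta_v)^{-1}f)f\,dxdv$ by antisymmetry of the velocity kernel, and identify the field term by symmetrizing in $(x,y)$.

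One small correction: Lemma \ref{embed} bounds $\langle x\rangle^{-m}\langle v\rangle^{-m}$ by the transported weight, which is the wrong direction for controlling moments of $f$. The integrability of $|x|^2|v|f$ and $|x||v|^2f$ follows instead from the substitution $y=x-vt$, and this is exactly where $m>6$ is needed.
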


\begin{proof}
    By Lemma \ref{intch}, the function $\textup{I}(t)$ is differentiable for a.e. $t\in[0,T]$, and its first derivative is given by $\textup{I}'(t) = \frac{1}{2}\iint_{\mathbb{R}^6} |x|^2\partial_tf(t,x,v)\,dxdv$ for a.e. $t$. Substituting equation \eqref{VPiL} into this expression $\iint_{\mathbb{R}^6} |x|^2\partial_tf\,dxdv$ and applying the divergence theorem together with integration by parts, under the condition $m>6$ and the admissible assumptions \eqref{Dtf dom} and \eqref{vDxf Dvf L1w}, we obtain
    \begin{align*}
    \begin{split}
    \textup{I}'(t)&=\frac{1}{2}\iint_{\mathbb{R}^6} |x|^2\partial_tf(t,x,v)\,dvdx\\
    &= \frac{1}{2}\iint_{\mathbb{R}^6} |x|^2 \left[-v\cdot \nabla_x f +(\nabla_xK\star_x \rho)\cdot \nabla_vf + (-\Delta_v)^{-1}f \Delta_vf+f^2\right]\,dvdx\\
    &= \iint_{\mathbb{R}^6} (x\cdot v)f(t,x,v)\,dvdx.
    \end{split}
    \end{align*}

    We now turn to the second derivative $\textup{I}''(t)$. By Lemma \ref{intch} again, $\textup{I}'(t)$ is differentiable for a.e. $t\in[0,T]$ and its derivative is given by $\iint_{\mathbb{R}^6}(x\cdot v)\partial_tf\,dxdv$. Substituting equation \eqref{VPiL} into this expression and applying integration by parts and divergence theorem under the admissible assumptions \eqref{Dtf dom} and \eqref{vDxf Dvf L1w}, we arrive at a representation of $\textup{I}''(t)$:
    \begin{align}
    \textup{I}''(t)&=\iint_{\mathbb{R}^6} (x\cdot v)\left[-v\cdot \nabla_xf+(\nabla_xK\star_x \rho)\cdot \nabla_vf+(-\Delta_v)^{-1}f\Delta_vf+f^2\right]\,dvdx\nonumber\\ 
    &=\iint_{\mathbb{R}^6} |v|^2f-x\cdot(\nabla_xK\star_x \rho)f+2(x\cdot \nabla_v(-\Delta_v)^{-1}f)f\,dvdx. \label{I'' rep}
    \end{align}To simplify \eqref{I'' rep}, we first establish the following identity
    \begin{align}\label{0 identity}
    \iint_{\mathbb{R}^6} (x\cdot\nabla_v(-\Delta_v)^{-1}f)f\,dxdv=0.
    \end{align}Indeed, the integrand $(x\cdot\nabla_v(-\Delta_v)^{-1}f)f$ is integrable since $\nabla_v(-\Delta_v)^{-1}f\in L^\infty_{t,x,v}$ and $xf(t)\in L^1_{x,v}$. Exchanging the velocity variables in the double integral shows that the expression is antisymmetric, and hence equal to its own negative, which implies \eqref{0 identity}.

    Next, we evaluate the force term. Using the identity $\nabla_xK(x) = +\frac{x}{4\pi|x|^3}$, we rewrite the integral involving $x\cdot(\nabla_xK\star \rho)f$ as a double integral in the spatial variables. By symmetry and an application of Fubini's theorem, the mixed term cancels, leaving the expression:
    \begin{align}\nonumber
    \iint_{\mathbb{R}^6} x\cdot(\nabla_xK\star_x\rho)f\,dvdx&=\frac{1}{8\pi} \iint_{\mathbb{R}^6} \frac{\rho(y)}{|x-y|}\,dy\rho(x)\,dx\\
    &=-\frac{1}{2} \int_{\mathbb{R}^3} (K\star_x\rho)\rho\,dx.\label{second term expression}
    \end{align}

    Substituting the expression \eqref{0 identity} and \eqref{second term expression} into \eqref{I'' rep}, we obtain the desired formula for $\textup{I}''$.
\end{proof}

In Lemma \ref{I'''}, we derive an explicit expression for $\textup{I}'''$. Lemma \ref{field E diff} is then devoted to the evaluation of one of the terms arising in $\textup{I}'''(t)$.

\begin{lemma}\label{field E diff}
    We have for a.e. $t\in[0,T]$
    \begin{align*}
        \frac{d}{dt}\int_{\mathbb{R}^3} \rho(t,x)(K\star\rho)(t,x)\,dx = 2\int_{\mathbb{R}^3} (\nabla K\star\rho)(t,x)\cdot j(t,x)\,dx.
    \end{align*}
\end{lemma}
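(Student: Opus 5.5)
The plan is to differentiate under the integral sign, exploit the symmetry of the kernel $K$, and then apply the continuity equation from Lemma \ref{continuity eq}. The three steps are: justify the differentiation, symmetrize, and integrate by parts in $x$.

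\textbf{Differentiation.} Write
\[
\int_{\mathbb{R}^3}\rho(K\star\rho)\,dx=\iint_{\mathbb{R}^6}K(x-y)\rho(t,x)\rho(t,y)\,dx\,dy .
\]
We aim to show that for a.e. $t$ this is differentiable with derivative
\[
\iint K(x-y)\big[\partial_t\rho(x)\rho(y)+\rho(x)\partial_t\rho(y)\big]\,dx\,dy .
\]
The justification follows the difference-quotient argument of Lemma \ref{intch}. By Lemma \ref{intch} and \eqref{Dtf dom}, $|\partial_t\rho(t,x)|\le \int g_1(x,v)\,dv=:G_1(x)$, and $G_1\in (L^1\cap L^\infty)_x$ because $g_1\in(L^1\cap L^\infty)_xL^1_v$. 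By the decay in $X_T^m$, $\rho$ is bounded uniformly in $t$ in $L^1\cap L^\infty$. Hence
\[
|K(x-y)|\,G_1(x)\,\sup_s\rho(s,y)
\]
is integrable on $\mathbb{R}^6$, since $|K|\star(\text{an }L^1\cap L^\infty\text{ function})\in L^\infty$ by Lemma \ref{basic upper bound} (via Lemma \ref{Linfty.est.sing.int}). Writing $\rho(t+h)-\rho(t)=\int_t^{t+h}\partial_\tau\rho\,d\tau$ and expanding the product difference, dominated convergence together with Lebesgue differentiation gives the claimed derivative for a.e. $t$.

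\textbf{Symmetrization.} Since $K$ is even, the two terms coincide, so the derivative equals $2\int_{\mathbb{R}^3}(K\star\rho)\,\partial_t\rho\,dx$.

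\textbf{Integration by parts.} Substitute $\partial_t\rho=-\nabla_x\cdot j$ from Lemma \ref{continuity eq}, valid for each $x$ and a.e. $t$; Fubini lets us choose a common null set of times. Integrating by parts in $x$ turns $-2\int(K\star\rho)\nabla\cdot j\,dx$ into $2\int(\nabla K\star\rho)\cdot j\,dx$, using $\nabla(K\star\rho)=\nabla K\star\rho$.

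The main obstacle is this last step, specifically justifying that the boundary terms vanish. First, $K\star\rho$ and $\nabla K\star\rho$ are bounded by Lemma \ref{basic upper bound}, and $K\star\rho$ decays like $|x|^{-1}$ since $\rho$ decays. Second, $j\in L^1_x$ with $|j(t,x)|\lesssim\langle x\rangle^{-m+?}$ from the $X_T^m$ bound on the relevant time interval, and $\nabla\cdot j=\int v\cdot\nabla_x f\,dv\in L^1_x$ by \eqref{vDxf Dvf L1w}.

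To handle this, I would integrate over a ball $B_R$ and bound the surface term by $R^2\sup_{|x|=R}|K\star\rho|\,|j|\lesssim R\sup_{|x|=R}|j|\to0$. If pointwise decay of $j$ on spheres is inconvenient, I would instead use a cutoff $\psi(x/R)$ and bound the error by
\[
\int_{R<|x|<2R}R^{-1}|K\star\rho|\,|j|\,dx\to0,
\]
which only needs $j\in L^1$ and $K\star\rho\in L^\infty$.
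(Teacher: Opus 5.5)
Your proposal is correct and follows the paper's proof. Both arguments differentiate the interaction energy by the product rule, justified through the fundamental theorem of calculus for $\rho$, Lebesgue differentiation and dominated convergence; both then symmetrize the two resulting terms using the evenness of $K$, and finally substitute the continuity equation and integrate by parts in $x$. Your cutoff argument for the boundary terms and your use of Fubini to fix a common null set of times are correctly handled details that the paper leaves implicit.
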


\begin{proof}
    Firstly, we show
    \begin{align}\label{field energy diff}
        \frac{d}{dt}\int_{\mathbb{R}^3} \rho(K\star\rho)(t,x)\,dx = \int_{\mathbb{R}^3} \partial_t\rho(K\star\rho)(t,x) + \rho(K\star\partial_t\rho)(t,x)\,dx \quad \text{for a.e. }t\in[0,T].
    \end{align}To this end, recall from Lemma \ref{intch} that
    \begin{align*}
        \rho(t,y) - \rho(s,y) = \int_s^t \partial_t\rho(\tau,y)\,d\tau \quad \textup{ for } t,s\in[0,T], \textup{ for }y\in\mathbb{R}^3.
    \end{align*}Multiplying this identity by $K(x-y)$ and integrating with respect to $y$, we obtain
    \begin{align}\label{ftc convol rho}
        (K\star\rho)(t,x) - (K\star\rho)(s,x) = \int_s^t (K\star\partial_t\rho)(\tau,x)\,d\tau \quad \textup{ for }t,s, \textup{ for }x.
    \end{align}Consequently, by the Lebesgue differentiation theorem,
    \begin{align}\nonumber
        \frac{1}{h}\left[(K\star\rho)(t+h,x) - (K\star\rho)(t,x)\right] &= \frac{1}{h}\int_t^{t+h} (K\star\partial_t\rho)(\tau,x)\,d\tau\\
        &\to K\star\partial_t\rho(t,x),\label{convol rho diff}
    \end{align} as $h\to0$ for a.e. $t$ for each fixed $x$. We now compute
    \begin{multline*}
        \frac{1}{h}\int_{\mathbb{R}^3} \rho(t+h)(K\star\rho)(t+h) - \rho(t)(K\star\rho)(t) \,dx\\
        =\int_{\mathbb{R}^3} \rho(t+h)\left[\frac{(K\star\rho)(t+h) - (K\star\rho)(t)}{h}\right]\,dx + \int_{\mathbb{R}^3} \left[\frac{\rho(t+h) - \rho(t)}{h}\right](K\star\rho)(t)\,dx\\
        \eqdef D_1 + D_2.
    \end{multline*}
    By \eqref{ftc convol rho}, \eqref{convol rho diff} and the dominated convergence theorem, we have for a.e. $t\in[0,T]$
    \begin{align*}
        D_1\to \int_{\mathbb{R}^3} \rho(t)(K\star\partial_t\rho)(t)\,dx , \quad D_2\to\int_{\mathbb{R}^3} \partial_t\rho(t)(K\star\rho)(t)\,dx.
    \end{align*}Here the convergence of $D_1$ is justified by the domination $\|\rho(x)\|_{L^\infty_t}\|K\star \|g_1(x)\|_{L^1_v}\|_{L^\infty_x}\in L^1_x$, while the convergence of $D_2$ follows from the domination $\|g(x)\|_{L^1_v}\|K\star\rho\|_{L^\infty_{t,x}}\in L^1_x$ for $D_2 \to\int \partial_t\rho (K\star\rho)\,dx$. This proves \eqref{field energy diff}.

    Next, we simplify the right-hand side of \eqref{field energy diff} using the continuity equation \eqref{cont eq}. By Fubini's theorem and the symmetry of the kernel, the two terms on the right-hand side coincide, so it suffices to consider $\int_{\mathbb{R}^3} \partial_t\rho(K\star\rho)\,dx$. Invoking \eqref{cont eq}, we obtain for a.e. $t\in[0,T]$
    \begin{align*}
        (\textup{RHS of }\eqref{field energy diff}) &= -2\int_{\mathbb{R}^3} \nabla\cdot j(t,x) (K\star\rho)(t,x)\,dx\\
        &= 2\int_{\mathbb{R}^3} j(t,x)\cdot (\nabla K\star\rho)(t,x)\,dx,
    \end{align*}which completes the proof.
\end{proof}

\begin{lemma}The third derivative $\textup{I}'''$ of $\textup{I}(t)$ exists for a.e. $t\in[0,T]$ and admits the representation $$\textup{I}'''(t)=\frac{d}{dt}\textup{I}''(t)=\iint_{\mathbb{R}^6} v\cdot(\nabla_xK\star_x\rho)f\,dvdx+4\iint_{\mathbb{R}^6} (-\Delta_v)^{-1}f\,f\,dvdx,$$for a.e. $t$.
In particular, by invoking Lemma \ref{continuity eq}, we obtain for a.e. $t$,\label{I'''}
\begin{align}\label{Etot id}
\frac{d}{dt}\left(\textup{I}''(t) + \frac{1}{2} \int_{\mathbb{R}^3} \rho K\star_x \rho\,dx\right)&=4\iint_{\mathbb{R}^6} (-\Delta_v)^{-1}f f\,dvdx.
\end{align}
\end{lemma}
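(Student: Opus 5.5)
The plan is to differentiate the formula for $\textup{I}''(t)$ from Lemma \ref{I' I''} term by term:
\begin{align*}
\textup{I}''(t)=\iint_{\mathbb{R}^6}|v|^2f\,dvdx+\frac12\int_{\mathbb{R}^3}\rho\,(K\star_x\rho)\,dx .
\end{align*}
For the field part, Lemma \ref{field E diff} already gives, for a.e. $t$,
\begin{align*}
\frac12\frac{d}{dt}\int_{\mathbb{R}^3}\rho(K\star\rho)\,dx=\int_{\mathbb{R}^3}(\nabla K\star\rho)\cdot j\,dx=\iint_{\mathbb{R}^6}v\cdot(\nabla_xK\star_x\rho)f\,dvdx .
\end{align*}
The last equality is Fubini, using $j=\int vf\,dv$ and the $L^\infty$ bound on $\nabla K\star\rho$ from Lemma \ref{basic upper bound}. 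For the kinetic part, Lemma \ref{intch} with weight $|v|^2$ gives a.e. differentiability and $\frac{d}{dt}\iint|v|^2f=\iint|v|^2\partial_tf$. I will then substitute the equation in the same form used in the proof of Lemma \ref{I' I''}:
\begin{align*}
\partial_tf=-v\cdot\nabla_xf+(\nabla_xK\star_x\rho)\cdot\nabla_vf+(-\Delta_v)^{-1}f\,\Delta_vf+f^2 .
\end{align*}

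I will treat the resulting terms one at a time.

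\textbf{Transport.} $\iint|v|^2\,v\cdot\nabla_xf=0$ by the divergence theorem in $x$, using \eqref{Dxf dom} and \eqref{vDxf Dvf L1w}.

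\textbf{Force.} Integration by parts in $v$ (the field is independent of $v$) gives $\iint|v|^2(\nabla K\star\rho)\cdot\nabla_vf=-2\iint v\cdot(\nabla K\star\rho)f$.

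\textbf{Collision.} Write $\phi=(-\Delta_v)^{-1}f$ and use $\Delta_v\phi=-f$ together with \eqref{D potential}. Integrating by parts twice,
\begin{align*}
\iint|v|^2\big(\phi\,\Delta_vf+f^2\big)=\iint f\,\Delta_v(|v|^2\phi)+\iint|v|^2f^2=\iint f\,(6\phi+4v\cdot\nabla_v\phi),
\end{align*}
since the $-|v|^2f^2$ produced by $|v|^2\Delta_v\phi$ cancels the reaction term. Next I symmetrize the double integral using \eqref{D potential}:
\begin{align*}
\iint f(v)\,v\cdot\nabla_v\phi(v)\,dv=\frac1{4\pi}\iint f(u)f(v)\frac{v\cdot(u-v)}{|u-v|^3}\,dudv .
\end{align*}
Exchanging $u\leftrightarrow v$ and averaging turns the kernel into $-\tfrac12|u-v|^{-1}$, so this integral equals $-\tfrac12\int f\phi\,dv$. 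The collision contribution is therefore $6-2=4$ times $\iint\phi f$, which is the $4\iint(-\Delta_v)^{-1}f\,f$ term.

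Adding the three contributions to the field contribution yields $\textup{I}'''$. I will keep track of the sign of the force term throughout. With the convention $\nabla_xK=+x/(4\pi|x|^3)$ used in Lemma \ref{I' I''}, the kinetic part contributes $-2\iint v\cdot(\nabla K\star\rho)f$ and the field part contributes $+1$ times the same integral. I will recheck the sign normalisation against the displayed formula for $\textup{I}'''$.

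For \eqref{Etot id} the signs do not matter in the same way. The quantity $\textup{I}''+\frac12\int\rho K\star\rho$ equals $\iint|v|^2f+\int\rho K\star\rho$. Its derivative collects $-2\iint v\cdot(\nabla K\star\rho)f$ from the kinetic part and $+2\int(\nabla K\star\rho)\cdot j$ from Lemma \ref{field E diff}. These cancel by Lemma \ref{continuity eq} and Fubini, leaving exactly $4\iint(-\Delta_v)^{-1}f\,f$.

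The main obstacle is justifying the integrations by parts with the growing weight $|v|^2$. Three things need control: boundary terms at infinity in $v$, the integrability of $|v|^2\phi\,\Delta_vf$, and Fubini for the singular kernel $|u-v|^{-2}$. For these I will use the decay $f\lesssim\langle x-vt\rangle^{-m}\langle v\rangle^{-m}$ with $m>6$, which makes $|v|^3f$ and $|v|^2f^2$ integrable and $\phi,\nabla_v\phi$ bounded. I will also use \eqref{vDxf Dvf L1w} for $|v|^2\nabla_vf\in L^1$, and perform each integration by parts on balls $B_R$ and let $R\to\infty$. The symmetrization step needs $\iint f(u)f(v)|v||u-v|^{-2}<\infty$, which follows from \eqref{int decay} with $k=2$.
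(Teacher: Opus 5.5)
Your argument is the paper's own: Lemma~\ref{field E diff} for the field part, substitution of \eqref{VPiL} into $\iint|v|^2\partial_tf$ for the kinetic part, and the identity $\iint|v|^2[(-\Delta_v)^{-1}f\,\Delta_vf+f^2]=6\iint(-\Delta_v)^{-1}f\,f+4\iint f\,v\cdot\nabla_v(-\Delta_v)^{-1}f$ followed by a symmetrization giving $-\tfrac12\iint(-\Delta_v)^{-1}f\,f$. The paper splits $v\cdot(u-v)=-|u-v|^2+u\cdot(u-v)$ rather than averaging over $u\leftrightarrow v$, which is equivalent, and you spell out the integrability needed for the integrations by parts more explicitly than the paper does.

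Do not hedge on the sign: your bookkeeping is correct. With the paper's conventions one gets $\textup{I}'''(t)=-\iint v\cdot(\nabla_xK\star_x\rho)f\,dvdx+4\iint(-\Delta_v)^{-1}f\,f\,dvdx$, so the first display of the statement has the wrong sign on the force term; the paper's ``putting all the terms together'' never actually checks it. By contrast, \eqref{Etot id} is correct exactly as you derive it, and it is the only part used later (Remark~\ref{E' est}).
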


\begin{proof}
In view of Lemma \ref{field E diff}, it remains to compute the time derivative of the kinetic energy $\textup{KE}(t)$. By Lemma \ref{intch} and equation \eqref{VPiL}, we obtain
\begin{multline*}
\frac{d}{dt}\left(\iint_{\mathbb{R}^6} |v|^2f(t,x,v)\,dxdv\right)= \iint_{\mathbb{R}^6} |v|^2\partial_tf\,dxdv\\
=\iint_{\mathbb{R}^6} |v|^2[-v\cdot\nabla_xf+(\nabla_xK\star_x\rho)\cdot \nabla_vf+(-\Delta_v)^{-1}f\Delta_vf+f^2]\,dvdx\\
=-2\iint_{\mathbb{R}^6} v\cdot(\nabla_xK\star_x\rho)f\,dvdx+\iint_{\mathbb{R}^6} |v|^2[(-\Delta_v)^{-1}f\Delta_vf+f^2]\,dvdx.
\end{multline*}Consequently,
\begin{align}\label{E'}
\frac{d}{dt}\left(\textup{I}''(t) + \frac{1}{2} \int_{\mathbb{R}^3} \rho K\star_x \rho\,dx\right)=\iint_{\mathbb{R}^6} |v|^2[(-\Delta_v)^{-1}f\Delta_vf+f^2]\,dvdx.
\end{align}

We now evaluate the right-hand side of \eqref{E'}. A direct computation yields
\begin{multline*}
\iint_{\mathbb{R}^6} |v|^2[(-\Delta_v)^{-1}f\Delta_vf+f^2]\,dvdx 
= 6\iint_{\mathbb{R}^6} (-\Delta_v)^{-1}f\,f\,dvdx+4\iint_{\mathbb{R}^6} v\cdot\nabla_v(-\Delta_v)^{-1}f\,f\,dvdx,
\end{multline*}The remaining term can be further simplified as follows:
\begin{multline*}
\int_{\mathbb{R}^3} v\cdot\nabla_v(-\Delta_v)^{-1}f\,f\,dv=\frac{1}{4\pi}\iint_{\mathbb{R}^6} \frac{v\cdot(u-v)}{|u-v|^3}f(u)f(v)\,dudv\\
=\frac{1}{4\pi}\left[-\iint_{\mathbb{R}^6}\frac{f(u)f(v)}{|u-v|}\,dudv+\iint_{\mathbb{R}^6}\frac{u\cdot(u-v)}{|u-v|^3}f(u)f(v)\,dudv\right]\\
=\frac{1}{4\pi}\left[-\iint_{\mathbb{R}^6}\frac{f(u)f(v)}{|u-v|}\,dudv-\iint_{\mathbb{R}^6} u\cdot\nabla_u\left(\frac{1}{|u-v|}\right)f(u)f(v)\,dudv\right]\\
=-\int_{\mathbb{R}^3} (-\Delta_v)^{-1}f\,f\,dv-\int_{\mathbb{R}^3} u\cdot\nabla_u(-\Delta_u)^{-1}f(u)f(u)\,du
=-\frac{1}{2}\int_{\mathbb{R}^3} (-\Delta_v)^{-1}f\,f\,dv.
\end{multline*}Putting all the terms together yields the desired formula.
\end{proof}

We now have all the necessary identities at hand and are ready to establish a
local $L^\infty$ estimate as a corollary of Lemma \ref{uniform bound}.

\begin{lemma}\label{Linfty est f}
    We have \begin{align}\label{global Linfty}
       \sup_{t\in[0,T]} \| \langle x\rangle^m\langle v\rangle^mf(t)\|_{L^\infty(\mathbb{R}^6)}\le \frac{C}{\|f^\textup{in}\|_{\mathcal{C}^2_{m,0}}},
    \end{align}where $C>0$ is a constant depending only on $m$.
\end{lemma}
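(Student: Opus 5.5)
The estimate \eqref{global Linfty} comes from the uniform bound of Lemma~\ref{uniform bound}, passed to the limit as in Step~1 of the proof of Theorem~\ref{thm.main1}. The only new point is converting the transport-adapted weight $\langle x-vt\rangle^{m}\langle v\rangle^{m}$ into the product weight $\langle x\rangle^m\langle v\rangle^m$, which costs only a power of $\langle T\rangle$.

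\emph{Step 1 (bound in the transported weight).} By Step~1 of the proof of Theorem~\ref{thm.main1}, for almost every $(t,x,v)\in\mathcal{Q}_T$,
\begin{align*}
0\le f(t,x,v)\le \frac{c_0}{M^2\|f^\textup{in}\|_{\mathcal{C}^2_{m,0}}}\langle x-vt\rangle^{-m}\langle v\rangle^{-m},
\end{align*}
where $M=M(m)$ is the constant of Lemma~\ref{local comparison} and $c_0$ is absolute. This bound is inherited from the uniform estimate of Lemma~\ref{uniform bound} through the weak-$*$ limit and \cite[Exercise 4.26]{FA_Brezis_2011}.

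\emph{Step 2 (change of weight).} I would use the elementary inequality
\begin{align*}
\langle x\rangle\le \sqrt2\,\langle x-vt\rangle\langle vt\rangle\le \sqrt2\,\langle T\rangle\langle x-vt\rangle\langle v\rangle, \qquad 0\le t\le T,
\end{align*}
which is Peetre's inequality, or the argument of Lemma~\ref{embed} with $x$ and $x-vt$ interchanged. It gives
\begin{align*}
\langle x\rangle^m\langle v\rangle^m\langle x-vt\rangle^{-m}\langle v\rangle^{-m}\le (\sqrt2\langle T\rangle)^m\langle v\rangle^{m}.
\end{align*}
This leaves an extra factor $\langle v\rangle^m$. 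So the bound of Step~1 alone controls only $\langle x\rangle^m f$, with no $v$-weight gained.

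\emph{Step 3: the main obstacle.} To obtain the full weight $\langle x\rangle^m\langle v\rangle^m$, I would run the comparison argument of Lemma~\ref{local comparison} with exponent $2m$ in place of $m$. The barrier becomes $\langle x-\chi(\varepsilon v)vt\rangle^{-m}\langle v\rangle^{-2m}$. This is admissible because the initial norm $L^\infty_{m,2m}$ already carries $\langle v\rangle^{2m}$, and because $\Delta_v\langle v\rangle^{-2m}\lesssim_m\langle v\rangle^{-2m}$. All of the estimates (I)--(VI) go through with constants depending only on $m$.

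With the barrier from the modified comparison, Step~2 yields
\begin{align*}
\langle x\rangle^m\langle v\rangle^m f\lesssim (\sqrt2\langle T\rangle)^m\,\frac{c_0}{M^2\|f^\textup{in}\|_{\mathcal{C}^2_{m,0}}}.
\end{align*}
It remains to absorb $\langle T\rangle^m$ into a constant depending only on $m$. Remark~\ref{T bd} shows $T$ is restricted by the bound on $\psi$ in \eqref{C const w.r.t. m}. If one prefers not to redo the comparison argument, the fallback is to state \eqref{global Linfty} with weight $\langle x\rangle^m\langle v\rangle^{m'}$ for some $m'<m$, using the $\langle v\rangle^{-m}$ decay of Step~1; this weaker form is all the later estimate of $\textup{I}'''$ needs.

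Whichever route is taken, the constant must not depend on $T$. This is where I expect the main difficulty. I would handle it by observing that \eqref{init bd} forces $M(T+1)^3\le\log(1/\|f^\textup{in}\|)$, so the factor $(2\langle T\rangle)^m$ is controlled by $C(m)$, exactly as in Remark~\ref{T bd}. Finally, $C\eqdef c_0M(m)^{-2}(\sqrt2\,C(m))^{m}$ depends only on $m$.
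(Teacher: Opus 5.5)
Your Step~1 is exactly the paper's proof, which consists only of citing Lemma~\ref{uniform bound} (passed to the limit as in Step~1 of the proof of Theorem~\ref{thm.main1}). Your Step~2 is right that this citation controls $f$ only in the transported weight: trading $\langle x-vt\rangle^{m}$ for $\langle x\rangle^{m}$ costs $(\sqrt{2}\langle T\rangle)^m\langle v\rangle^m$. The gap is that neither repair in Step~3 works.

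First, the barrier $\langle x-\chi(\varepsilon v)vt\rangle^{-m}\langle v\rangle^{-2m}$ is not admissible for data in $\mathcal{C}^2_{m,0}$, and the reason you give is backwards. In estimate (VI) of Lemma~\ref{local comparison}, the source $\mathcal{L}_{\varepsilon,g}(f^{\textup{in}}_\varepsilon)I^\varepsilon$ decays only like $\langle x\rangle^{-m}\langle v\rangle^{-2m}$. The extra $\langle v\rangle^{-m}$ carried by $L^\infty_{m,2m}$ is precisely what Lemma~\ref{embed} spends to replace $\langle x\rangle^{-m}$ by $\langle x-\chi(\varepsilon v)vt\rangle^{-m}$. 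Dominating that source by your barrier would therefore require data in $L^\infty_{m,3m}$. Exploiting the short time support of $I^\varepsilon$ instead would need $|\chi(\varepsilon v)v|\,t\lesssim 1$ there. Since $|\chi(\varepsilon v)v|$ reaches $1/(2\varepsilon)$, that means a support of length $O(\varepsilon)$, whereas \eqref{I def} works at the scale $\varepsilon^\kappa$.

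Second, the constant does not become $T$-independent. Condition \eqref{init bd} allows $M(T+1)^3$ to be as large as roughly $\log(1/\|f^{\textup{in}}\|_{\mathcal{C}^2_{m,0}})$. Hence $\langle T\rangle^m$ can be of order $(\log(1/\|f^{\textup{in}}\|_{\mathcal{C}^2_{m,0}}))^{m/3}$, which is unbounded as the data shrink. The constant $C(m)$ of Remark~\ref{T bd} bounds $\psi(t)=(2\langle t\rangle)^m(t+1)^3e^{-M(t+1)^3}$, where the decaying factor $e^{-M(t+1)^3}$ does all the work. It gives no bound on $(2\langle T\rangle)^m$ alone, and your own inequality $M(T+1)^3\le\log(1/\|f^{\textup{in}}\|_{\mathcal{C}^2_{m,0}})$ already exhibits the dependence on the data.

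The fallback is miscomputed as well. From $f\lesssim\langle x-vt\rangle^{-m}\langle v\rangle^{-m}$ you can bound $\langle x\rangle^{a}\langle v\rangle^{b}f$ only when $a+b\le m$, and then only with a factor $\langle T\rangle^{a}$; at $x=vt$ the bound is merely $\langle v\rangle^{-m}$. So a weight $\langle x\rangle^m\langle v\rangle^{m'}$ with $m'>0$ is out of reach.

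What the citation of Lemma~\ref{uniform bound} honestly yields, with a constant depending only on $m$, is $\|f\|_{X_T^m}=\sup_{t}\|\langle x-vt\rangle^m\langle v\rangle^m f(t)\|_{L^\infty}\le c_0M^{-2}\|f^{\textup{in}}\|_{\mathcal{C}^2_{m,0}}^{-1}$. Since $\langle x-vt\rangle^{-m}\le 1$, the same bound holds for $\|f\|_{L^\infty}$ and for $\sup_{t,x,v}\langle v\rangle^m f$. These are the only consequences used afterwards: Remark~\ref{E' est} and the proof of Theorem~\ref{thm.main2} invoke just $\|f\|_{L^\infty_{t,x,v}}$ and $\|f\|_{X_T^m}$. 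The sound fix is to state the estimate in the $X_T^m$ weight, rather than try to manufacture the product weight $\langle x\rangle^m\langle v\rangle^m$ with a $T$-independent constant.
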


\begin{proof}
    The lemma follows from Lemma \ref{uniform bound}.
\end{proof}

\begin{remark}\label{E' est}By Lemma \ref{mass conservation} and \ref{Linfty est f}, there exists a constant $C_1>0$ depending only on $m$, $T$, $\|f^\textup{in}\|_{L^1_{x,v}}$ and $\|f^\textup{in}\|_{\mathcal{C}^2_{m,0}}$ such that for a.e. $t\in[0,T]$, for $C_*$ the implied constant in \eqref{3.1 result} and for $C_{**}$ the implied constant in \eqref{global Linfty},
\begin{align*}
\frac{d}{dt}\left(\textup{I}''(t) + \frac{1}{2} \int_{\mathbb{R}^3} \rho K\star_x \rho\,dx\right)&=4\iint_{\mathbb{R}^6} (-\Delta_v)^{-1}f\, f\,dvdx\\
&\le 4C_*\lVert f\rVert_{L^\infty_{t,x,v}}\iint_{\mathbb{R}^6} f\,dvdx\\
&\le 4C_*C_{**}\frac{\lVert f^\textup{in}\rVert_{L^1}}{\|f^\textup{in}\|_{\mathcal{C}^2_{m,0}}}\\
&=: C_1(\|f^\textup{in}\|_{L^1}, \|f^\textup{in}\|_{\mathcal{C}^2_{m,0}}).
\end{align*}
\end{remark}

\begin{proof}[Proof of Theorem \ref{thm.main2}] We aim to show that the spatial moment $\textup{I}(t)$ becomes negative in finite time under the assumptions of Theorem \ref{thm.main2}.

By Lemma \ref{I' I''}, and Remark \ref{E' est}, together with Lemma \ref{I'''}, we obtain the estimate
\begin{align*}
\textup{I}''(t)\le C_1t+kC_1+\textup{I}''(0), \quad \text{for a.e. }t\in[0,T].
\end{align*}Here, by arguments analogous to those in Remark \ref{E' est} and Lemma \ref{Linfty est f}, and recalling the constants $C_*, C_{**}$ introduced in Remark \ref{E' est}, we have
\begin{align*}
\sup_{t\in[0,T]} \int_{\mathbb{R}^3} \rho|K\star_x \rho|\,dx &\le C_*\| \rho\|_{L^\infty([0,T]\times\mathbb{R}^3)} \|f\|_{L^\infty_tL^1(\mathbb{R}^6)}\\
&\le C_*C_{**}\int_{\mathbb{R}^3}\langle v\rangle^{-m}\,dv \|f\|_{X_T^m} \|f^\textup{in}\|_{L^1}\\
&\le C_*C_{**} \left(\frac{4\pi}{3} + \frac{4\pi}{m-3}\right)\frac{\|f^\textup{in}\|_{L^1}}{\|f^\textup{in}\|_{\mathcal{C}^2_{m,0}}}\\
&=: C_2(\|f^\textup{in}\|_{L^1}, \|f^\textup{in}\|_{\mathcal{C}^2_{m,0}}) = k C_1,
\end{align*}for some $k(m) \eqdef \frac{m\pi}{3(m-3)}>0$, provided $m>3$. Integrating the inequality for $\textup{I}''(t)$ twice in $t$ yields
\begin{align*}
\textup{I}(t)\le \frac{C_1}{6}t^3+\left(kC_1+\textup{I}''(0)\right)\frac{t^2}{2}+\textup{I}'(0)t+\textup{I}(0)=:g(t), \quad \text{ for all }t\in[0,T].
\end{align*}We now show that the cubic polynomial $g$ attains a negative local minimum. Since $g'$ is a quadratic polynomial, it has two distinct real roots if and only if its
discriminant is positive. A direct computation gives
$$
\Delta \eqdef \bigl(kC_1 + \textup{I}''(0)\bigr)^2 - 2C_1 \textup{I}'(0).
$$
Because $\textup{I}'(0)<0$ by assumption, the second term on the right-hand side is strictly
positive, and hence $\Delta > 0$. 
Therefore, $g'(t)$ has two distinct real roots. Moreover, since $\textup{I}'(0)<0$ again, the larger
root $t_2$ satisfies $t_2>0$. As $g$ is a cubic polynomial, the identity $$g'(t_2)= \frac{C_1}{2}t_2^2 + (kC_1+\textup{I}''(0))t_2 + \textup{I}'(0)=0$$ allows us to
eliminate the quadratic term in $g(t_2)$, thereby rewriting $g(t_2)$ as a linear function of $t_2$:
\begin{align*}
g(t_2)
&= \left(\frac{2}{3}\textup{I}'(0)
      - \frac{(kC_1 + \textup{I}''(0))^2}{3C_1}\right)t_2
   + \left(\textup{I}(0) - \frac{(kC_1 + \textup{I}''(0))\textup{I}'(0)}{3C_1}\right) \\
&=: M_2 + M_3.
\end{align*}
By the positivity of the discriminant and the assumption $\textup{I}'(0)<0$, we have
$M_2<0$. Thus, to ensure that $g(t_2)<0$, it suffices to verify that
$M_3\le0$. A direct computation shows that this condition is equivalent to the hypothesis \eqref{inertia < energy}. Consequently,
$$
g(t_2) < 0 \qquad \text{for some } t_2>0,
$$
and hence $g$ attains a negative local minimum in finite time.

Finally, since $\textup{I}(t_2)<0$, the non-negative solution cannot be extended beyond $t_2>0$. This completes the proof of Theorem \ref{thm.main2}.
\end{proof}

\begin{proof}[Proof of Corollary \ref{collapse}]
In Corollary \ref{collapse}, recall that we assumed the existence of an $L^1_{x,v}$-valued measure $g_t: [0,t_{\max}]\to \mathcal{M}^1_2$, which represents a weak solution $f\in X_T^m\cap A_2$ on the time interval $[0,t_{\max})$. By mass conservation (Lemma \ref{mass conservation}), we have $$g_t(\mathbb{R}^6)=M>0$$ for all $t\in[0,t_{\max}].$ Also, note that for all $t<t_{\max}$, $$I(t)=\frac{1}{2}\iint_{\mathbb{R}^6}|x|^2 dg_t\ge 0.$$ Moreover, by the singularity mechanism established in Theorem \ref{thm.main2} together with continuity of the spatial moment $\textup{I}(t)$ up to $t_{\max}$ established in Lemma \ref{intch}, we obtain 
$$
\lim_{t\to t_{\max}^-} \textup{I}(t)=0.
$$
Fix $\varepsilon>0$. Then for every $t<t_{\max}$,
$$
\textup{I}(t)\ge \frac{\varepsilon^2}{2}\,g_t\big(\{|x|\ge\varepsilon\}\big),
$$
and therefore $g_t\big(\{|x|\ge\varepsilon\}\big)\to 0$ as $t\to t_{\max}^-$.
Consequently,
$$
g_{t_{\max}}\big(\{|x|\ge\varepsilon\}\big)=0,\qquad \forall\,\varepsilon>0.
$$
Since $\varepsilon>0$ is arbitrary, we conclude that
$$
\operatorname{supp}g_{t_{\max}}\subset\{x=0\}\times\mathbb{R}^3_v,
$$
which is a set of Lebesgue measure zero in $\mathbb{R}^6$.

In particular, the measure $g_{t_{\max}}$ is singular with respect to Lebesgue measure and therefore cannot be represented by an $L^1$-density. This shows that, at the maximal non-negativity time $t_{\max}$, the mass collapses onto a set of measure zero in phase space, and no continuation of the weak solution within our $L^1$-framework is possible.
\end{proof}

\section*{Acknowledgments}
\sloppypar{
 Both authors are supported by the National Research Foundation of Korea (NRF) grants RS-2023-00210484 and RS-2023-00219980. J. W. J. is also supported by the NRF grants 2022R1G1A1009044 and
 2021R1A6A1A10042944.}

\appendix
\section{Preliminary Lemmas}

In this section, we collect two lemmas that are used in the main sections.

We first establish an embedding property of the singular integrals, which will be used to estimate $(-\Delta_v)^{-1}f$ and $E_f$ in $C^\alpha_\textup{para}$.

\begin{lemma}[Boundedness of the singular integrals]\label{Linfty.est.sing.int}Let $1< p<\infty$ and let $f\in L^p(\mathbb{R}^3)\cap L^\infty(\mathbb{R}^3)$. Suppose $0< \beta <3$. Then, if $p<\frac{3}{3-\beta}$, there exists a constant $C>0$, depending only on $\beta,p$ such that
\begin{align}\label{3.1 result}
\lVert f\star |\cdot|^{-\beta}\rVert_{L^\infty(\mathbb{R}^3)}\le C  \big(\|f\|_{L^p} + \lVert f\rVert_{L^\infty(\mathbb{R}^3)}\big).
\end{align}
\end{lemma}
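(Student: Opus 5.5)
The plan is to split the convolution into a near part and a far part and bound each by one of the two norms. Fix $x\in\mathbb{R}^3$ and write
\begin{align*}
|f\star|\cdot|^{-\beta}(x)| \le \int_{|y|<1}\frac{|f(x-y)|}{|y|^{\beta}}\,dy + \int_{|y|\ge1}\frac{|f(x-y)|}{|y|^{\beta}}\,dy =: A(x)+B(x).
\end{align*}
For the near part $A(x)$, I will pull out the $L^\infty$ norm of $f$. Since $\beta<3$, the kernel $|y|^{-\beta}$ is integrable on the unit ball, which gives
\begin{align*}
A(x)\le \|f\|_{L^\infty}\int_{|y|<1}|y|^{-\beta}\,dy = \frac{4\pi}{3-\beta}\|f\|_{L^\infty}.
\end{align*}

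For the far part $B(x)$, I will use H\"older's inequality with exponents $p$ and $p'=p/(p-1)$:
\begin{align*}
B(x)\le \|f\|_{L^p}\left(\int_{|y|\ge1}|y|^{-\beta p'}\,dy\right)^{1/p'}.
\end{align*}
The integral on the right is finite exactly when $\beta p'>3$, that is, when $p<\frac{3}{3-\beta}$. This is the hypothesis of the lemma. When $\beta\ge3$ nothing is needed here, but $\beta<3$ is assumed anyway. The resulting constant is $\bigl(4\pi/(\beta p'-3)\bigr)^{1/p'}$, which depends only on $\beta$ and $p$.

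Adding the two bounds and taking the supremum over $x$ gives \eqref{3.1 result}, with $C$ equal to the larger of the two constants. The only point that needs care is confirming the exponent condition at infinity. I expect no real obstacle: near the origin only $\beta<3$ is used, and at infinity only $p<3/(3-\beta)$ is used.
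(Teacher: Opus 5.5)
Your proof is correct and follows the paper's argument: split at $|x-y|=1$, bound the near part by $\|f\|_{L^\infty}$ using integrability of $|z|^{-\beta}$ on the unit ball (since $\beta<3$), and bound the far part by H\"older's inequality, whose kernel factor is finite exactly when $\beta p'>3$, i.e.\ $p<\frac{3}{3-\beta}$. Your explicit constants $\frac{4\pi}{3-\beta}$ and $\bigl(4\pi/(\beta p'-3)\bigr)^{1/p'}$ are also correct.
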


\begin{proof}Without loss of generality, suppose $f\ge0$. We estimate the integral  $f\star |\cdot|^{-\beta}$ into two regions:

\textbf{Local region }$|x-y|<1$: Using the assumption $f\in L^\infty$ and applying H\"older's inequality with exponent 1, we have
\begin{align*}
\int_{|x-y|<1}\frac{f(y)}{|x-y|^\beta}\,dy \le \lVert f\rVert_{L^\infty(\mathbb{R}^3)}\left(\int_{|x-y|<1} \frac{dy}{|x-y|^{\beta }}\right).
\end{align*}We perform a change of variable $z=x-y$, yielding:
\begin{align}\label{p>1 est}
C(\beta) \eqdef\int_{|x-y|<1}\frac{dy}{|x-y|^\beta} =  \int_{|z|<1} \frac{dz}{|z|^{\beta }}<\infty,
\end{align}since $\beta<3$. Hence,
\begin{align*}
\int_{|x-y|<1}\frac{f(y)}{|x-y|^\beta}\,dy \le C(\beta) \lVert f\rVert_{L^\infty(\mathbb{R}^3)}.
\end{align*}

\textbf{Far region }$|x-y|\ge1$: In this region, we have
\begin{align*}
\int_{|x-y|\ge1} \frac{f(y)}{|x-y|^\beta}\,dy \le \lVert f\rVert_{L^p(\mathbb{R}^3)} \left(\int_{|x-y|\ge1} \frac{dy}{|x-y|^{\beta p'}}\right)^{\frac{1}{p'}}.
\end{align*}As before, we define
\begin{align*}
C(\beta,p) \eqdef  \left(\int_{|z|\ge1} \frac{dz}{|z|^{\beta p'}}\right)^{\frac{1}{p'}} = \left(4\pi\int_1^\infty r^{2-\beta p'}\,dr\right)^{1-\frac{1}{p}} <\infty,
\end{align*}so that
\begin{align*}
\int_{|x-y|\ge1} \frac{f(y)}{|x-y|^\beta}\,dy \le C(\beta,p) \lVert f\rVert_{L^p(\mathbb{R}^3)}.
\end{align*}

Combining the estimates in all cases, we have the desired bound with the implied constant depending only on $\beta$ and $p$ and complete the proof.
\end{proof}

Next, we present a result that is used to establish the uniform boundedness of $\{C_n\}_{n\in\mathbb{N}}$ appearing in \eqref{C_n}.

\begin{lemma}[Boundedness of the compositions of exponential functions]\label{Phi property}Define
\begin{align*}
    \Phi(s) \eqdef ce^{A(s+1)},\quad \textup{ for } s>0,
\end{align*}for constants $c,A>0$, and denote the compositions of $\Phi$ by $\Phi^k =\underbrace{\Phi\circ\Phi\circ\cdots\circ\Phi}_{k\textup{ times}} $. Then the following statements hold:
\begin{enumerate}[label=(\roman*)]
\item \textit{Existence of fixed points: }The equation $\Phi(t) = t$ has two positive roots if and only if 
\begin{align}\label{small c}
 0<c < \frac{\exp\{-A\}}{eA}.
\end{align}
\item \textit{Boundedness of iterates: }Under the conditions in (i), suppose that equation $\Phi(t)=t$ has two positive roots, say $0<N_1<N_2$. Then, the sequence $\{\Phi^k(s)\}_{k\in\mathbb{N}}$ is uniformly bounded if and only if $s\le N_2$.
\item \textit{Positivity of iterates: }Again, under the conditions in (i), we have that for all $s\in(0,N_2)$, $\textup{inf}_{k\in\mathbb{N}}\Phi^k(s)>0$.
\end{enumerate}
\end{lemma}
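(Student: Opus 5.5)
The plan is to study the real function $h(t)\eqdef \Phi(t)-t = ce^{A(t+1)}-t$ on $(0,\infty)$ and to use monotonicity of $\Phi$ for the dynamics of the iterates. Since $\Phi$ is strictly increasing and strictly convex, $h$ is strictly convex with $h(0)=ce^{A}>0$ and $h(t)\to\infty$ as $t\to\infty$. Hence $h$ has zero, one (tangential) or two positive roots, according to the sign of its minimum. The minimum is attained where $h'(t_*)=0$, i.e.\ $cAe^{A(t_*+1)}=1$. This gives $t_*=\frac1A\log\frac{1}{cA}-1$ and
\begin{align*}
h(t_*)=\frac1A-t_*=\frac1A\Bigl(1+A-\log\frac{1}{cA}\Bigr).
\end{align*}
Two roots correspond to $h(t_*)<0$ with $t_*>0$. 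The first condition is $\log\frac{1}{cA}>1+A$, i.e.\ $c<\frac{e^{-A}}{eA}$, which is \eqref{small c}. Positivity of $t_*$ is then automatic, since $t_*>\frac{1+A}{A}-1>0$. Conversely, if \eqref{small c} fails then $h(t_*)\ge0$, which leaves at most one root (a double root). This proves (i).

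For (ii), let $0<N_1<N_2$ be the two roots. By convexity, $h>0$ on $(0,N_1)\cup(N_2,\infty)$ and $h<0$ on $(N_1,N_2)$. Because $\Phi$ is increasing, the intervals $[0,N_1]$, $[N_1,N_2]$ and $[N_2,\infty)$ are each invariant. The iterates $\Phi^k(s)$ behave as follows:
\begin{itemize}
\item for $s<N_1$, they are increasing and bounded by $N_1$;
\item for $s\in(N_1,N_2)$, they are decreasing and bounded below by $N_1$;
\item for $s=N_1$ or $s=N_2$, they are constant.
\end{itemize}
In every case with $s\le N_2$ the sequence is bounded by $\max(s,N_1)\le N_2$. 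For $s>N_2$, the sequence is increasing. If it were bounded it would converge to a fixed point $L>s>N_2$, which contradicts the fact that there is no root beyond $N_2$. So it diverges, which gives the "only if" direction. Finally, (iii) follows from the same monotone picture: the infimum is either $\Phi(s)>0$ (increasing case) or $\ge N_1>0$ (decreasing case). Here I use that $\Phi>0$ everywhere, which makes positivity immediate.

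The main obstacle is only bookkeeping. First, the case $s=0$ (relevant for the application with $\Phi^k(0)$) should be included: this is harmless, since $\Phi(0)>0$ puts us in the increasing case. Second, the statement's "if and only if" in (i) should be read with the tangential single-root case excluded. I would also note that the larger root $N_2$ is the $s_{\mathrm{large}}$ used in Lemma \ref{uniform bound}, and that the iteration from $0$ is in fact bounded by $N_1$.
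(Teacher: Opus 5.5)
Your proof is correct and follows essentially the same route as the paper: locate the two fixed points of the increasing convex map $\Phi$, then split into three cases according to the position of $s$ relative to $N_1<N_2$ and track the monotone orbit in each. The differences are cosmetic. You get the threshold in (i) by minimizing $\Phi(t)-t$, whereas the paper substitutes $t=As$ into $te^{-t}=cAe^{A}$. For $s>N_2$ you rule out boundedness because a bounded increasing orbit would converge to a fixed point beyond $N_2$, whereas the paper uses the mean value theorem with $\Phi'>1$ to show the increments grow. Your version also covers the endpoint cases $s=N_1$, $s=N_2$ and $s=0$, which the paper glosses over.
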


\begin{proof}
(i) Notice that $\Phi(s)=s$ for some positive number $s>0$ if and only if $te^{-t} = cAe^{A}$ for $t\eqdef As$. 
But since $te^{-t}\le \frac{1}{e}$ for all $t\in\mathbb{R}$, the first statement is equivalent to \eqref{small c}.

(ii) and (iii) Suppose $\Phi(s)<s$ for some $s>0$ by letting $c$ be sufficiently small. Then, the graph of $y=\Phi(s)$ intersects the identity line $y=s$ at exactly two points, yielding two fixed points
\begin{align*}
0<N_1<N_2<\infty, \quad \textup{such that}\quad \Phi(N_1) = N_1, \quad \Phi(N_2) = N_2.
\end{align*}We analyze three cases based on the initial value $s\in(0,\infty)$.
\begin{enumerate}[label=Case \arabic*. , leftmargin=4em]
\item $0<s \le N_1$: Since $\Phi(s)>s$ for $0\le s<N_1$, and $\Phi$ is strictly increasing, the sequence $\{\Phi^k(s)\}_{k\in\mathbb{N}}$ is strictly increasing. Moreover, for all such $s$, since $\Phi(s)<N_1$, the sequence is bounded above by $N_1$. We have proved (ii) and (iii) in this case.
\item $N_1<s<N_2$: In this interval, $\Phi(s)<s$, so the sequence $\{\Phi^k(s)\}_{k\in\mathbb{N}}$ is strictly decreasing. Again, because $\Phi(s)>N_1$ for $s\in(N_1,N_2)$, the sequence is bounded below by $N_1$. We have proved (ii) and (iii) in this interval.
\item $s>N_2$: Here $\Phi(s)>s$, and since $\Phi$ is strictly convex on $(0,\infty)$ (i.e., $\Phi''>0$), and $\Phi'(N_2)>1$, the sequence $\{\Phi^k(s)\}$ grows faster with each iteration. More precisely, for each $s>N_2$, applying the mean value theorem yields
\begin{align*}
\Phi^2(s) - \Phi(s) = \Phi'(s')(\Phi(s) - s)>\Phi(s) - s,
\end{align*}for some $s'\in (s,\Phi(s))$, using $\Phi'(s')>1$. Inductively, the sequence $\{\Phi^k(s)\}$ satisfies
\begin{align*}
\Phi^{k+1}(s) - \Phi^k(s) > \Phi^k(s) - \Phi^{k-1}(s),
\end{align*}and diverges to $\infty$. This shows $\{\Phi^k(s)\}$ is unbounded for $s>N_2$.
\end{enumerate}
Consequently, for any case, (ii) and (iii) hold, completing the proof.
\end{proof}

\section{Auxiliary Theorems on Parabolic Equations}

In this section, we state a bijection theorem for bounded linear operators, particularly applicable to uniformly elliptic and parabolic equations. Furthermore, we introduce fundamental and advanced \textit{a priori} estimates for general uniformly parabolic operators. 

The bijection theorem for bounded linear operators follows. 
\begin{theorem}[Method of continuity, {\cite[Theorem 5.2]{Ell_Gilbarg_1998}}]\label{MoC}Let $E,F$ be the two Banach spaces with norms $\| \cdot\|_E$ and $\| \cdot\|_F$, respectively and let $\mathcal{L}_0$, $\mathcal{L}_1$ be bounded linear operators from $E$ into $F$. For each $t\in[0,1]$, set
\begin{align*}
\mathcal{L}_t = (1-t)\mathcal{L}_0 + t\mathcal{L}_1
\end{align*}and suppose that there is a constant $C$ such that
\begin{align*}
\lVert x\rVert_E \le C \lVert \mathcal{L}_tx\rVert_F
\end{align*}for $t\in[0,1]$. Then $\mathcal{L}_1$ maps $E$ onto $F$ if and only if $\mathcal{L}_0$ maps $E$ onto $F$.
\end{theorem}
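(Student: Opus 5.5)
This is the method of continuity (Gilbarg--Trudinger, Theorem 5.2), and I would prove it by a contraction-mapping continuation argument in the parameter $t$. The uniform a priori bound $\|x\|_E\le C\|\mathcal{L}_tx\|_F$ gives two things at once. First, each $\mathcal{L}_t$ is injective. Second, whenever $\mathcal{L}_t$ is onto, its inverse $\mathcal{L}_t^{-1}:F\to E$ exists and is bounded with $\|\mathcal{L}_t^{-1}\|\le C$, and $C$ does not depend on $t$.

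\textbf{Step 1: a perturbation step of fixed length.} Suppose $\mathcal{L}_s$ is onto for some $s\in[0,1]$. Fix $t\in[0,1]$ and $y\in F$. The equation $\mathcal{L}_tx=y$ can be rewritten as
\begin{align*}
\mathcal{L}_sx = y + (\mathcal{L}_s-\mathcal{L}_t)x = y + (t-s)(\mathcal{L}_0-\mathcal{L}_1)x .
\end{align*}
This is equivalent to the fixed-point problem
\begin{align*}
x = Tx \eqdef \mathcal{L}_s^{-1}y + (t-s)\mathcal{L}_s^{-1}(\mathcal{L}_0-\mathcal{L}_1)x .
\end{align*}
The map $T:E\to E$ satisfies
\begin{align*}
\|Tx-Tz\|_E\le C|t-s|\bigl(\|\mathcal{L}_0\|+\|\mathcal{L}_1\|\bigr)\|x-z\|_E .
\end{align*}
Set $\delta\eqdef \bigl(2C(\|\mathcal{L}_0\|+\|\mathcal{L}_1\|)\bigr)^{-1}$, which is positive because $\mathcal{L}_0,\mathcal{L}_1$ are bounded. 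If $|t-s|<\delta$, then $T$ is a contraction on the Banach space $E$. The Banach fixed point theorem then gives a solution of $\mathcal{L}_tx=y$, so $\mathcal{L}_t$ is onto. The key point is that $\delta$ depends only on $C$ and the operator norms, not on $s$.

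\textbf{Step 2: finitely many steps.} Assume $\mathcal{L}_0$ is onto. Partition $[0,1]$ into finitely many points $0=t_0<t_1<\dots<t_N=1$ with consecutive gaps less than $\delta$. Apply Step 1 successively from $t_0$ to $t_1$, then to $t_2$, and so on, to conclude that $\mathcal{L}_1$ is onto. The converse direction is symmetric: starting from $s=1$ and stepping downward shows that surjectivity of $\mathcal{L}_1$ implies surjectivity of $\mathcal{L}_0$.

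There is no serious obstacle here. The only point needing care is that the step length $\delta$ is uniform in $s$, and this is exactly what the $t$-independence of $C$ guarantees. Completeness of $E$ is used in the contraction step.
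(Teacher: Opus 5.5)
Your proposal is correct and is the standard contraction-mapping continuation argument of Gilbarg--Trudinger, Theorem 5.2. The paper does not prove this statement itself and simply cites that theorem.
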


Then, we provide several \textit{a priori} estimates for general uniformly parabolic operators. Recall the relevant function spaces introduced in Section \ref{notations}.
\begin{theorem}[Schauder estimate, {\cite[Theorem 8.10.1]{Ell_Krylov_1996}}]\label{Schauder} Assume that $L$ is a uniformly elliptic operator and that the zeroth-order coefficient satisfies $c\le-\lambda$ for some constant $\lambda>0$. Let $T\in(-\infty,\infty]$ (including $\infty$) and let $\alpha\in(0,1)$. Suppose that for some constant $K>0$,
\begin{align*}
\lVert b^i\rVert_{C_\textup{para}^\alpha(\Omega_T)}, \lVert c\rVert_{C_\textup{para}^\alpha(\Omega_T)} \le K \quad \textup{ for }\,\, 1\le i\le n.
\end{align*}Then for each $u\in C^{2,\alpha}_\textup{para}(\mathcal{Q}_T)$, there exists a constant $N$ depending only on $n, \kappa,\alpha, K,\lambda$ such that
\begin{align*}
\lVert u\rVert_{C_\textup{para}^{2,\alpha}(\Omega_T)} \le N\lVert \partial_tu - Lu\rVert_{C_\textup{para}^\alpha(\Omega_T)}.
\end{align*}
\end{theorem}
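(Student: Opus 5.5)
The statement just given is the Schauder estimate quoted from Krylov. I would not reprove it from scratch but derive it in the standard way, to make clear which constants it uses.

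\emph{Step 1: constant coefficients.} First take the case where the leading coefficients $a^{ij}$ are constant and $b^i=c=0$. Represent $u$ by the heat kernel of $\partial_t-a^{ij}\partial_{ij}$, exactly as in the proof of Lemma~\ref{Heat_solvability}. For the second derivatives $\nabla^2 u$ and for $\partial_t u$, the resulting operators are parabolic singular integrals. Their kernels have the homogeneity and cancellation properties needed for boundedness on $C^\alpha_\textup{para}$. Standard Campanato/Korn type arguments then give
\[
[u]_{C^{2,\alpha}_\textup{para}}\le N[\partial_tu-Lu]_{C^\alpha_\textup{para}},
\]
with $N$ depending only on $n,\alpha$ and the ellipticity constant $\kappa$.

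\emph{Step 2: variable coefficients.} Freeze the coefficients at a point and localize with a cutoff on a parabolic cylinder of radius $r$. The coefficient oscillation error is bounded by $r^\alpha[a]_{C^\alpha}\,\|\nabla^2u\|$, and this is absorbed once $r$ is small enough. The lower-order terms $b^i\partial_iu$ and $cu$ are controlled using the bound $K$ on $\|b^i\|_{C^\alpha_\textup{para}}$ and $\|c\|_{C^\alpha_\textup{para}}$, together with the interpolation inequality quoted in Lemma~\ref{lem.Hold.cont}:
\[
[u]_{C^{1,\alpha}}\le\delta[u]_{C^{2,\alpha}}+C_\delta\|u\|_{L^\infty}.
\]
Taking the supremum over centers then yields the estimate with an extra term $N\|u\|_{L^\infty(\Omega_T)}$ on the right-hand side.

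\emph{Step 3: removing $\|u\|_{L^\infty}$.} This step uses the hypothesis $c\le-\lambda$. By the maximum principle on $\Omega_T=(-\infty,T)\times\mathbb{R}^6$, applied to a bounded $u$ with the comparison function $\|f\|_\infty/\lambda$ where $f=\partial_tu-Lu$, one gets
\[
\|u\|_{L^\infty}\le\lambda^{-1}\|\partial_tu-Lu\|_{L^\infty}.
\]
There is no initial time on $\Omega_T$, so the argument uses a sup/inf at infinity: either a penalization such as $\pm u-\eta\langle x,v\rangle$, or the fact that the sup is approached along a sequence with derivatives tending to zero. Substituting this bound into Step 2 gives the stated inequality.

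I expect the main obstacle to be Step 3. It needs the maximum principle on an unbounded domain with no initial time, and one has to check that $N$ depends only on $(n,\kappa,\alpha,K,\lambda)$. In practice I would cite \cite[Theorem 8.10.1]{Ell_Krylov_1996} directly, as the paper does.
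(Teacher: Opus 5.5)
The paper gives no proof of this statement; it only cites \cite[Theorem 8.10.1]{Ell_Krylov_1996}. Your outline is the standard argument behind that theorem: the constant-coefficient estimate, then freezing and localization with interpolation for the lower-order terms, then the maximum principle with $c\le-\lambda$ to remove $\|u\|_{L^\infty}$. So there is no competing route to compare. One step should not be left implicit, though, because as written your sketch proves a slightly different statement from the one transcribed.

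\emph{The Hölder bound on the leading coefficients.} In Step 2 the freezing error is controlled by $[a^{ij}]_{C^\alpha_\textup{para}}$. The statement as quoted bounds only $b^i$ and $c$ in $C^\alpha_\textup{para}$ and imposes no regularity on the leading coefficients. Your constant therefore depends on a quantity the hypotheses do not control.

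This is a hypothesis lost in transcription, not a flaw in your argument. Any Schauder estimate of this kind needs a bound such as $\|a^{ij}\|_{C^\alpha_\textup{para}}\le K$, and the cited result includes it. No argument can avoid it. With fixed ellipticity bounds and a fixed right-hand side, a coefficient oscillating at scale $\delta$ already breaks the estimate: for $a\,u''-\lambda u=f$ with $a(x)=\tfrac32+\tfrac12\sin(x/\delta)$, one gets $[u'']_{C^\alpha}\gtrsim\delta^{-\alpha}$.

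You should add this hypothesis explicitly and let $N$ depend on it through $K$. In the paper's application it holds. The leading coefficient is $\varepsilon\,\mathbb{I}$ plus $(-\Delta_v)^{-1}_\varepsilon g$ on the $v$-block, and the latter is parabolically Hölder by Lemma~\ref{lem.Hold.cont}.

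\emph{The domain.} You were right to work on $\Omega_T$ in Step 3. For $u\in C^{2,\alpha}_\textup{para}(\mathcal{Q}_T)$, as printed, the estimate is false: a solution of the homogeneous equation with nonzero data at $t=0$ violates it. The statement should read $u\in C^{2,\alpha}_\textup{para}(\Omega_T)$.
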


\begin{theorem}[$L^p$ estimate, Theorem 5.2.1 of {\cite{Ell_Krylov_2008}}]\label{L^p}  Assume that $L$ is a uniformly elliptic operator and that the zeroth-order coefficient satisfies $c\le -\lambda$ for some constant $\lambda\ge1$. Suppose further that for some $K>0$,
\begin{align*}
\lVert (b^1,\cdots, b^n)\rVert_{L^\infty(\Omega_T)} + \lVert c\rVert_{L^\infty(\Omega_T)} \le K.
\end{align*}In addition, assume that there exists an increasing modulus of continuity $\omega(\varepsilon)$, $\varepsilon\ge0$, such that $\omega(\varepsilon)\downarrow 0$ as $\varepsilon\downarrow0$ and that for all $t<T$, $x,y\in\mathbb{R}^n$ and $i,j=1,\cdots,n$
\begin{align*}
|a^{ij}(t,x) - a^{ij}(t,y)| \le \omega(|x-y|).
\end{align*}Then, for any $1<p<\infty$, there exists a constant $N$ depending only on $K,\kappa,\omega, p,$ and $n$ such that
\begin{align*}
\lVert u\rVert_{W^{1,2}_p(\Omega_T)} \le N \lVert \partial_tu - Lu \rVert_{W^{1,2}_p(\Omega_T)}
\end{align*}for all $u\in W^{1,2}_p(\Omega_T)$.
\end{theorem}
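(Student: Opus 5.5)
This statement is the cited $L^p$ estimate (Theorem~\ref{L^p}, quoted from Krylov), so the plan is to reconstruct the standard argument in three steps. First, constant coefficients. Then freezing of coefficients with a partition of unity. Finally, absorption of the lower-order terms using $\lambda$ large. (The right-hand side should read $\|\partial_t u - Lu\|_{L_p(\Omega_T)}$; I will prove it in that form.)

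\textbf{Step 1 (constant coefficients).} For $L_0 u = a^{ij}\partial_{ij}u$ with constant uniformly elliptic $(a^{ij})$ (constant $\kappa$), I will prove
\begin{align*}
\|\partial_t u\|_{L_p(\Omega_T)} + \|\nabla^2 u\|_{L_p(\Omega_T)} + \lambda\|u\|_{L_p(\Omega_T)} \le N(n,\kappa,p)\,\|\partial_t u - L_0u + \lambda u\|_{L_p(\Omega_T)}.
\end{align*}
A linear change of variables reduces this to the heat operator. The second derivatives are then given by the parabolic Riesz-type operator $\nabla^2 G_\lambda\star$, which is a Calder\'on--Zygmund operator on the parabolic homogeneous space; it is bounded on $L_2$ by Plancherel and of weak type $(1,1)$ by the H\"ormander condition, hence bounded on $L_p$ for $1<p\le 2$ by Marcinkiewicz interpolation and for $p>2$ by duality. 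The zeroth-order bound $\lambda\|u\|_{L_p}\le\|f\|_{L_p}$ follows from $\|G_\lambda\|_{L^1}=\lambda^{-1}$ and Young's inequality, exactly as in Lemma~\ref{Heat_solvability}. Extending by zero for $t\le -R$ handles the half-infinite time interval $\Omega_T$.

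\textbf{Step 2 (freezing coefficients).} I fix $\delta>0$ with $\omega(\delta)\le\varepsilon_0$ and take a partition of unity $\{\zeta_k\}$ subordinate to balls of radius $\delta$ in $x$, with finite overlap and $|\nabla\zeta_k|\lesssim\delta^{-1}$, $|\nabla^2\zeta_k|\lesssim\delta^{-2}$. For each $k$ I write
\begin{align*}
\partial_t(\zeta_k u) - a^{ij}(t,x_k)\partial_{ij}(\zeta_k u) = \zeta_k(\partial_t u - Lu) + \big(a^{ij}-a^{ij}(t,x_k)\big)\zeta_k\partial_{ij}u + \text{(terms with } \nabla\zeta_k,\ \nabla u,\ u).
\end{align*}
Here the coefficients are frozen only in $x$. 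This requires a version of Step~1 for coefficients that are merely measurable in $t$; that version follows from the same kernel estimates, since the $t$-dependence acts like a time reparametrization of a constant-coefficient operator. I then apply Step~1 to each piece, raise to the $p$-th power and sum over $k$. The freezing error is bounded by $\varepsilon_0\|\nabla^2u\|_{L_p}$ and is absorbed on the left once $\varepsilon_0$ is small. The commutator terms are bounded by $N(\delta)(\|\nabla u\|_{L_p}+\|u\|_{L_p})$.

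\textbf{Step 3 (lower-order terms and $\lambda$).} I use the interpolation inequality $\|\nabla u\|_{L_p}\le \eta\|\nabla^2u\|_{L_p}+N\eta^{-1}\|u\|_{L_p}$ together with the bounds $|b|,|c|\le K$ to reduce everything to $N(\delta,K)\|u\|_{L_p}$. Keeping the term $\lambda\|u\|_{L_p}$ from Step~1 on the left, I choose $\lambda\ge\lambda_0(K,\kappa,\omega,p,n)$ so that $N(\delta,K)\|u\|_{L_p}$ is absorbed. The hypothesis $c\le-\lambda$ is exactly the room needed for this. The result is the bound on $\|u\|_{W^{1,2}_p(\Omega_T)}$.

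\textbf{Main obstacle.} I expect Step~2 to be the hardest part, specifically handling coefficients that are only measurable in $t$, uniformly over the frozen point $x_k$. I would first try the time-reparametrized kernel argument. If that fails, I would fall back on Krylov's sharp-function (mean oscillation) estimate for $\nabla^2u$ combined with Fefferman--Stein.
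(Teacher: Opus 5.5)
The paper gives no proof of this statement; it quotes Krylov \cite{Ell_Krylov_2008}. Krylov's argument has the same skeleton as yours: an estimate for coefficients depending only on $t$, freezing in $x$ alone with a partition of unity, interpolation, and absorption of lower-order terms by a large $\lambda$. The real difference is the core step for $a=a(t)$. Krylov bounds the mean oscillation of $D^2u$ on parabolic cylinders, using only $L_2$ theory and interior estimates for the homogeneous equation. He then closes with the Fefferman--Stein and Hardy--Littlewood theorems for $p>2$, with duality for $p<2$. This is your stated fallback, and it also extends to VMO coefficients. Your Calder\'on--Zygmund route is viable too, but not for the reason you give. A time change removes only a scalar factor $a(t)=\alpha(t)a_0$. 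Once the eigendirections of $a(t)$ move, no reparametrization of time yields a constant-coefficient operator. What actually makes the kernel route work is the explicit fundamental solution
\[
\Gamma(t,s,z)=(4\pi)^{-n/2}(\det A)^{-1/2}\exp\Bigl(-\tfrac14\langle A^{-1}z,z\rangle\Bigr),\qquad A=A(t,s)=\int_s^t a(r)\,dr,
\]
with $\kappa(t-s)I\le A\le\kappa^{-1}(t-s)I$. This bound gives heat-kernel size and $z$-smoothness estimates for $D^2_z\Gamma$. The identities $\partial_t\Gamma=a^{ij}(t)D_{ij}\Gamma$ and $\partial_s\Gamma=-a^{ij}(s)D_{ij}\Gamma$ (a.e.) give Lipschitz bounds for $D^2_z\Gamma$ in $t$ and $s$. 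Hence H\"ormander's condition holds in both variables, which you need for weak $(1,1)$ and again for duality. The $L_2$ bound must come from the Fourier transform in $x$ alone, namely $|\xi|^2|\hat u(t,\xi)|\le\int_{-\infty}^t|\xi|^2e^{-\kappa(t-s)|\xi|^2}|\hat f(s,\xi)|\,ds$, followed by Young's inequality in $t$. Space--time Plancherel is unavailable once the symbol depends on $t$.

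The second problem is Step 3. Since $\|c\|_{L^\infty}\le K$, the hypothesis $c\le-\lambda$ supplies only some $\lambda\in[1,K]$, and you are not free to enlarge it. Your absorption, however, needs $\lambda$ above a threshold that already contains the commutator constant $N\delta^{-2}$. Here $\delta$ is fixed by $\omega$ and is unrelated to $K$. So the hypothesis is not ``exactly the room needed''. Reaching a fixed $\lambda\ge1$ would require a separate global-in-time bound $\|u\|_{L_p}\le N\|\partial_tu-Lu\|_{L_p}$, which the perturbative argument does not provide. What your argument proves is the standard formulation, which is also the one actually invoked in Lemma~\ref{Linear existence}: for $\|c\|_{L^\infty}\le K$ and every $\lambda\ge\lambda_0(n,\kappa,p,K,\omega)$,
\[
\|u\|_{W^{1,2}_p(\Omega_T)}\le N\,\|\partial_tu-Lu+\lambda u\|_{L_p(\Omega_T)} .
\]
You should state this corrected hypothesis explicitly rather than attribute the needed room to $c\le-\lambda$.
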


\begin{theorem}[Weighted mixed $L^p$ estimate, Theorem 6.3 of {\cite{Ell_Dong_2018}}]\label{Weighted mixed L^p estimate} Let $p,q\in(1,\infty)$ with $p\le q$, and let $K_0\ge1$ be a constant. Let the weight $w=w_1(x')w_2(t,x'')$ be of product form, where
\begin{gather*}
w_1(x')\in A_p(\mathbb{R}^{d_1}, dx'), \quad w_2(t,x'') \in A_q(\mathbb{R}^{1+d_2}, dx''\,dt),\\
d_1 + d_2 = d, \quad [w_1]_{A_p}\le K_0, \quad [w_2]_{A_q} \le K_0.
\end{gather*}Let $L = a^{ij}D_{ij} + b^i D_i + c$ be a uniformly elliptic operator satisfying the following conditions:
\begin{enumerate}[label=(\roman*)]
\item The coefficients $a^{ij}$ are continuous in the same sense as in Theorem \ref{L^p} with modulus of continuity $\omega$.
\item The coefficients $b^i$ and $c$ are measurable and bounded. In particular, there exists a constant $K\in(0,\infty)$ such that
\begin{align*}
|b^i|\le K, \quad |c|\le K.
\end{align*}
\end{enumerate}Then there exist a constant
\begin{gather*}
\lambda_0 = \lambda_0(d,\kappa,p,q,d_1,d_2, K_0,K,\omega)\ge1,
\end{gather*}such that the following holds. If $u\in W_{p,q,w}^{1,2}(\mathbb{R}^{1+d})$ satisfies
\begin{align*}
-u_t + Lu - \lambda u = f
\end{align*}for some  $f\in L_{p,q,w}(\mathbb{R}^{1+d})$, then for all $\lambda\ge\lambda_0$,
\begin{align*}
\lVert u_t\rVert_{L_{p,w}} + \lambda\lVert u\rVert_{L_{p,q,w}} + \sqrt{\lambda}\lVert Du\rVert_{L_{p,q,w}} + \lVert D^2u\rVert_{L_{p,q,w}} \le N\lVert f\rVert_{L_{p,q,w}},
\end{align*}where
$N=N(d,\kappa,p,q,d_1,d_2,K_0,K,\omega).$
\end{theorem}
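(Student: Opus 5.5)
The plan follows the standard route of Krylov and Dong--Kim: a pointwise mean-oscillation estimate for $D^2u$, lifted to the weighted mixed norm by maximal-function theorems, and then extended from frozen coefficients to the general operator by perturbation.

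\textbf{Step 1: constant coefficients.} First take $a^{ij}$ constant and $b^i=c=0$, so the equation reads $-u_t+a^{ij}D_{ij}u-\lambda u=f$. For any parabolic cylinder $Q_r(z_0)$ and any $\kappa\ge 8$, I would split $u=v+w$:
\begin{itemize}
\item $w$ solves the equation with right-hand side $f\,1_{Q_{\kappa r}(z_0)}$;
\item $v$ solves the homogeneous equation in $Q_{\kappa r}(z_0)$.
\end{itemize}
For $w$ the unweighted $L^2$ (or $L^{p_1}$) theory applies. For $v$, interior Hölder estimates on $D^2v$ hold, since derivatives of $v$ solve the same equation. Combining the two gives the pointwise mean-oscillation bound
\begin{align*}
\Big(|D^2u-(D^2u)_{Q_r}|\Big)_{Q_r}\le N\kappa^{-\alpha}\big(\mathcal{M}|D^2u|^{p_1}\big)^{1/p_1}+N\kappa^{(d+2)/p_1}\big(\mathcal{M}|f|^{p_1}\big)^{1/p_1}.
\end{align*}
I expect to need a $\lambda$-dependent variant of this bound to handle the terms $\lambda u$ and $\sqrt\lambda\,Du$.

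\textbf{Step 2: weighted mixed norms.} To pass to $L_{p,q,w}$ with $w=w_1(x')w_2(t,x'')$, I would use the Fefferman--Stein sharp function theorem and the Hardy--Littlewood maximal theorem in weighted mixed-norm spaces. These hold for product weights $w_1\in A_p$, $w_2\in A_q$, and can be obtained from the scalar weighted versions by Rubio de Francia extrapolation or by iterating the maximal operator in each group of variables. The $A_p$ classes are open, i.e.\ $w\in A_{p/p_1}$ for some $p_1>1$ close to $1$ depending on $K_0$. This lets me apply the maximal theorem to $|D^2u|^{p_1}$. Choosing $\kappa$ large then absorbs the $D^2u$ term into the left-hand side. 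A term controlling $\lambda u$ follows by Agmon's trick: add a dummy variable $y$, set $\tilde u=u(t,x)\cos(\sqrt\lambda y)$, and apply the $D^2$ estimate in one more dimension. The $\sqrt\lambda\,Du$ bound then follows by interpolation.

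\textbf{Step 3: variable coefficients.} For continuous leading coefficients with modulus $\omega$, I would freeze them on small cylinders and treat $(a^{ij}-a^{ij}(z_0))D_{ij}u$ as a perturbation. This is first done for $u$ supported in a cylinder of radius $R$ with $\omega(R)$ small, which requires an extra Hölder inequality with exponent slightly above $p_1$ because of the weights. For general $u$ I would use a partition of unity $\{\zeta_k\}$ at scale $R$; the commutator terms involve $Du$ and $u$ with constants depending on $R$. The lower-order terms $b^iD_iu+cu$, bounded by $K$, are moved to the right-hand side as well. Taking $\lambda\ge\lambda_0(K,\omega,K_0,\dots)$ large absorbs all of these via the $\lambda\|u\|$ and $\sqrt\lambda\|Du\|$ terms. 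Finally, $\|u_t\|$ is read off from the equation.

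\textbf{Main obstacle.} I expect the hardest part to be the partition-of-unity summation in the mixed weighted norm. For a fixed time $t$ the $L_p$ inner norm decomposes nicely, but summing in the outer $L_q$ norm over pieces that overlap in $t$ is delicate. The weights $w_2$ couple $t$ and $x''$, so the usual $\ell^p$-summation over the partition does not commute with the mixed norm. My first attempt would be a partition only in $x'$ together with a finite-overlap argument in $(t,x'')$ using $q\ge p$ and Minkowski's inequality. If that fails, I would instead prove the weighted estimate directly from a sharp-function bound for the variable-coefficient operator that already contains the oscillation term $\omega(R)$, which avoids localization altogether.
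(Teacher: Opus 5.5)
There is a genuine gap at exactly the step you flagged, and your proposed repair does not close it. Write $y=(t,x'')$. For a partition in $x'$ with $\sum_k\zeta_k^p\equiv1$, put $A_k(y)=\|\zeta_kF(\cdot,y)\|_{L_p(w_1\,dx')}$, so that $\|F\|_{L_{p,q,w}}^p=\|\sum_kA_k^p\|_{L_{q/p}(w_2)}$.

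Minkowski in $L_{q/p}$ (this is where $q\ge p$ enters) gives $\|F\|^p_{L_{p,q,w}}\le\sum_k\|\zeta_kF\|^p_{L_{p,q,w}}$. That is what you need for $F=D^2u$ on the left. On the right, however, you need the reverse inequality $\sum_k\|\zeta_kF\|^p_{L_{p,q,w}}\le N\|F\|^p_{L_{p,q,w}}$ for $F=f$ and for the commutator terms. This is false for $q>p$: if $A_1,\dots,A_K$ are indicators of disjoint sets of unit $w_2$-measure, the left side is $K$ while the right side is $K^{p/q}$. The only inequality available is $(\sum_k\|\zeta_kF\|^q_{L_{p,q,w}})^{1/q}\le\|F\|_{L_{p,q,w}}$, and the $\ell^p$ versus $\ell^q$ mismatch cannot be closed.

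Your fallback does not address the real difficulty either. The oscillation of $a^{ij}$ is small only on cylinders of radius $\lesssim R$. On large cylinders, the only source of smallness for $(a^{ij}-\bar a^{ij})D_{ij}u$ is the small support of $u$, which is precisely the localization you want to avoid.

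The paper gives no proof and simply quotes Dong--Kim, whose route sidesteps mixed-norm localization entirely. First, one proves the estimate in unmixed $L_p(w\,dx\,dt)$ for \emph{every} parabolic $A_p$ weight with $[w]_{A_p}\le K_1$, with $\lambda_0$ and $N$ depending only on $K_1$. There localization is harmless, since $\int|F|^pw=\sum_k\int|\zeta_kF|^pw$. Second, since $w_1(x')\tilde w(t,x'')$ is an $A_p$ weight with controlled constant whenever $w_1\in A_p(\mathbb{R}^{d_1})$ and $\tilde w\in A_p(\mathbb{R}^{1+d_2})$, one applies Rubio de Francia extrapolation in $(t,x'')$ to the pair $\bigl(\|(|u_t|+\lambda|u|+\sqrt\lambda|Du|+|D^2u|)(\cdot,y)\|_{L_p(w_1)},\ \|f(\cdot,y)\|_{L_p(w_1)}\bigr)$. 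This reaches $L_q(w_2)$ for arbitrary $w_2\in A_q$ and does not use $q\ge p$ at all.

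There is a second gap, in Step 3. Hypothesis (i) is the condition of Theorem~\ref{L^p}, namely $|a^{ij}(t,x)-a^{ij}(t,y)|\le\omega(|x-y|)$: continuity in $x$ uniformly in $t$, and nothing in $t$. So $a^{ij}$ may be merely measurable in $t$, e.g.\ $a^{ij}=(1+1_{\{t>0\}})\delta_{ij}$. Then $(a^{ij}-a^{ij}(z_0))D_{ij}u$ is of full size $|D^2u|$ on a fixed proportion of every cylinder straddling $t=0$, however small the cylinder, so freezing at a point $z_0$ cannot be absorbed.

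The model problem in Step 1 must instead be Krylov's, with $a^{ij}=a^{ij}(t)$ bounded and measurable. The $L_2$ theory for the part solving the equation with data $f1_{Q_{\kappa r}}$ still holds, via the Fourier transform in $x$. The $x$-derivatives of the homogeneous part $v$ still solve the same equation, so $D_x^2v$ is H\"older in $x$ and Lipschitz in $t$ on interior cylinders, even though $v_t$ need not be continuous. This yields the mean-oscillation bound for $D_x^2u$ only. In Step 3 you then freeze in $x$ alone, comparing with $a^{ij}(t,x_0)$, so the perturbation is bounded by $\omega(|x-x_0|)\,|D^2u|$. Finally, you recover $u_t$ from the equation, as you already intended.
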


\bibliographystyle{amsplaindoi}
\bibliography{bibliography.bib}{}
\end{document}